\newcommand{\R}{\mathbb{R}}
\newcommand{\N}{\mathbb{N}}
\renewcommand{\epsilon}{\varepsilon}
\numberwithin{equation}{section}
\newtheorem{theorem}{Theorem}[section]
\newtheorem{lemma}[theorem]{Lemma}
\newtheorem{proposition}[theorem]{Proposition}
\newtheorem{corollary}[theorem]{Corollary}
\renewcommand{\leq}{\leqslant}
\renewcommand{\le}{\leqslant}
\renewcommand{\ge}{\geqslant}
\newcommand{\Per}{\operatorname{Per}}
\newcommand{\Reg}{\operatorname{Reg}}
\renewcommand{\epsilon}{\varepsilon}
\newcommand{\e}{\varepsilon}
\title[A strict maximum principle for nonlocal minimal surfaces]{A strict maximum principle \\ for nonlocal minimal surfaces}
\thanks{SD was supported
by the Australian Future Fellowship FT230100333.
OS was supported by the NSF grant DMS-2055617.
EV was supported by the Australian Laureate Fellowship FL190100081. We would like to thank the Referees, whose valuable comments have contributed
to the improvement of this paper.}
\author[Serena Dipierro,
Ovidiu Savin, and Enrico Valdinoci]{Serena Dipierro${}^{(1)}$\and
Ovidiu Savin${}^{(2)}$
\and
Enrico Valdinoci${}^{(1)}$}
\begin{document}
\maketitle

{\scriptsize \begin{center} (1) -- Department of Mathematics and Statistics\\
University of Western Australia\\ 35 Stirling Highway, WA6009 Crawley (Australia)\\
\end{center}
\scriptsize \begin{center} (2) --
Department of Mathematics\\
Columbia University\\
2990 Broadway, NY 10027
New York (USA)
\end{center}
\bigskip

\begin{center}
E-mail addresses:
{\tt serena.dipierro@uwa.edu.au},
{\tt savin@math.columbia.edu},
{\tt enrico.valdinoci@uwa.edu.au}
\end{center}
}
\bigskip\bigskip

\begin{abstract}
In the setting of fractional minimal surfaces, we prove that if two
nonlocal minimal sets are one included in the other and share a common boundary point, then they must necessarily coincide.

This strict maximum principle is not obvious, since the surfaces may touch at an irregular point, therefore a suitable blow-up analysis must be combined with a bespoke regularity theory to obtain this result.

For the classical case, an analogous result was proved by Leon Simon.
Our proof also relies on a Harnack Inequality for nonlocal minimal surfaces that has been recently introduced by Xavier Cabr\'e and Matteo Cozzi and which can be seen as a fractional counterpart of a classical result by Enrico Bombieri and Enrico Giusti.

In our setting, an additional difficulty comes from the analysis of the corresponding nonlocal integral equation on a hypersurface, which presents a remainder whose sign and fine properties need to be carefully addressed.
\end{abstract}

\section{Introduction}

The maximum principle is a fundamental tool to study geometric properties of
minimal hypersurfaces. Roughly speaking, it says that if two sets
with smooth boundaries are included one into the other and touch at some point, then the mean curvature of the inner set at the contact point is larger than that of the outer set.

The strict maximum principle aims at imposing a separation between surfaces with the same mean curvature.
While this property is rather straightforward for smooth hypersurfaces, it becomes much more delicate in the presence of singularities. For classical minimal surfaces, such a strong maximum principle has been established by Leon Simon in the celebrated article~\cite{MR906394}, also relying on a Harnack Inequality for minimal surfaces which was put forth by Enrico Bombieri and Enrico Giusti in~\cite{MR308945}.

Related results were put forth in~\cite{MR482508}. See also~\cite{MR1017330}
for a strict maximum principle for stationary
integer rectifiable varifolds in which
one of them is smooth and~\cite{MR1402732} for stationary hypersurfaces under suitable assumptions
and relying on an ``extrinsic'' Harnack inequality.
\medskip

The goal of this article is to establish a strict maximum principle in the framework of nonlocal minimal surfaces.
This result can be seen as a fractional counterpart of the main result in~\cite{MR906394}
and also leverages a recent Harnack Inequality for nonlocal minimal surfaces introduced by Xavier Cabr\'e and Matteo Cozzi in~\cite{MR3934589}. \medskip

The setting that we consider is that of nonlocal minimal surfaces, as introduced in~\cite{MR2675483}. Namely, given a (bounded and Lipschitz) domain~$\Omega\subset\R^n$
and a (measurable) set~$E\subseteq\R^n$, we say that~$E$ is $s$-minimal in~$\Omega$ if, whenever
a (measurable) set~$F$ coincides with~$E$ outside~$\Omega$, it holds that
$$ \Per_s(E,\Omega)\le\Per_s(F,\Omega),$$
where
\begin{eqnarray*} \Per_s(E,\Omega)&:=&\iint_{(E\cap\Omega)\times(E^c\cap\Omega)}\frac{dx\,dy}{|x-y|^{n+s}}\\
&&\qquad+
\iint_{(E\cap\Omega)\times(E^c\cap\Omega^c)}\frac{dx\,dy}{|x-y|^{n+s}}+
\iint_{(E\cap\Omega^c)\times(E^c\cap\Omega)}\frac{dx\,dy}{|x-y|^{n+s}}.\end{eqnarray*}
Here above\footnote{We observe that the kernel notation is in agreement with the parameter choice in~\cite{MR2675483}.
Indeed, the kernel in~\cite[page~1112]{MR2675483} was defined as~$|x-y|^{-n-2\sigma}$ with~$\sigma\in\left(0,\frac12\right)$ and
the parameter~$s:=2\sigma\in(0,1)$ was then introduced in~\cite[Section~2]{MR2675483}.} and in what follows, $s\in(0,1)$ is a fractional parameter
and the superscript ``c'' denotes the complement set in~$\R^n$.\medskip

Nonlocal minimal surfaces are a fascinating, and very difficult topic of investigation.
While their regularity is known in the plane (see~\cite{MR3090533})
and up to dimension~$7$ when~$s$ is close to~$1$ (see~\cite{MR3107529}),
the full regularity theory of these objects is not well-understood.
Though no examples of singular sets are presently known,
we believe that nonlocal minimal surfaces do develop singularities, therefore, for the validity of
a strict maximum principle, in general it does not suffice to take into consideration only regular points.\medskip

In this setting, our main result is the following:

\begin{theorem}\label{MAIN}
Let~$r>0$ and $p\in\R^n$.
Let~$E_1$, $E_2$ be~$s$-minimal sets in~$B_r(p)$, with~$E_1\subseteq E_2$.
Assume that~$p\in(\partial E_1)\cap(\partial E_2)$. 

Then, $E_1=E_2$.
\end{theorem}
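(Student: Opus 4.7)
The plan is to argue by contradiction, assuming $E_1\subsetneq E_2$, and to apply the Cabré--Cozzi Harnack inequality to a suitable non-negative function measuring the separation between $\partial E_1$ and $\partial E_2$.

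I would start with a blow-up analysis at the common boundary point $p$. By compactness and uniform density estimates for $s$-minimal sets, the rescaled pairs $(E_i-p)/r_k$ converge in $L^1_{\mathrm{loc}}$, along a subsequence, to $s$-minimal cones $C_1\subseteq C_2$ with $0\in(\partial C_1)\cap(\partial C_2)$. Working on the relatively open regular part of $\partial C_1$ (dense by the known partial regularity of $s$-minimal surfaces), I would parameterize $\partial C_2$ as a normal graph over $\partial C_1$ with height function $u\ge 0$. Subtracting the fractional mean curvature equations $H_s[C_1]=0=H_s[C_2]$ and expanding carefully yields an equation for $u$ on $\partial C_1$, of the form $\mathcal{L}u+R[u]=0$, where $\mathcal{L}$ is an integro-differential operator of order $s$ on the hypersurface and $R[u]$ is a nonlinear remainder.

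The main obstacle lies precisely in the analysis of this equation, and especially of $R[u]$: one must identify $\mathcal{L}$ as a nondegenerate linear nonlocal operator of order $s$ on the hypersurface and show that $R[u]$ has a sign compatible with the inclusion (reflecting the convexity of the fractional perimeter along nested families) and growth controllable by $u$ itself. The sign and fine properties of this remainder, alluded to in the abstract, require delicate work, in particular near the singular set of $\partial C_1$, where the kernel of $\mathcal{L}$ degenerates and the ambient geometry is no longer a smooth hypersurface. Getting this step right is the heart of the proof.

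Once the equation for $u$ is in place, the Cabré--Cozzi Harnack inequality applied to $u$ on balls in the regular locus forces either $u\equiv 0$ locally or a uniform strict positive lower bound on $u$. Standard propagation through the connected regular part of $\partial C_1$, combined with the Hausdorff smallness of the singular set of $s$-minimal cones, rules out the second alternative (it is incompatible with the touching at $0$, which can be approached by regular points) and yields $u\equiv 0$ everywhere, hence $C_1=C_2$. Finally, applying the same Harnack scheme directly to $E_1$ and $E_2$ at progressively smaller scales, any nontrivial separation at a definite scale would persist under blow-up and contradict the equality $C_1=C_2$ just established. Therefore $E_1=E_2$.
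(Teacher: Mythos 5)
The decisive gap is your final step. Knowing that the blow-up cones coincide does not rule out $E_1\neq E_2$, and the claim that ``any nontrivial separation at a definite scale would persist under blow-up'' is precisely what fails: the separation between $\partial E_1$ and $\partial E_2$ may be \emph{sublinear} at $p$, in which case it vanishes in every blow-up limit while the sets remain distinct. Handling this sublinear regime is the whole content of the theorem (already in Simon's classical case), and it is where the paper's machinery operates: one parameterizes $E_{2,k}$ over $E_{1,k}$ (the blow-up \emph{sets}, not the limit cones), normalizes the height $w_k$ at carefully selected points $z_{\star,k}$ at which $w_0(x)/|x|$ is almost maximal and which stay at distance $\theta_0|x|$ from the singular set (Appendix~\ref{GOODAP}, giving \eqref{Aappcjspd-01} and \eqref{0DIFUJH-0wiuoe2irhfegrf-iqpdwojfekgu0rjog}), derives the geometric equation of Theorem~\ref{PRIMOLE-TH} with quantitative control of the remainders, obtains compactness for $v_k=w_k/w_k(x_{\star,k})$ through the chain $L^1\to L^\infty\to C^\alpha$ (Lemmata~\ref{ZX-ipfe-jJSL}, \ref{VOL}, \ref{DERGED-Ge}), upgrades the limit $v_\infty$ to a distributional supersolution on $\Reg\mathcal{C}$ via the cut-off Lemma~\ref{LEM:AP}, and only then invokes the Cabr\'e--Cozzi weak Harnack inequality; the contradiction is with the sublinearity bound \eqref{TBcosdkcmTRAnsdc0o2ekdmf02erfgvb}, namely $v_k(y_{\star,k})\le 2\delta$ versus $v_k(y_{\star,k})\ge m/2$. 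None of this is reached if the Harnack inequality is applied only to the height $u$ of $\partial C_2$ over $\partial C_1$.

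Your cone-level step also does not close as stated. If the vertex is a singular point of $C_1$ (the only nontrivial case: if $0$ is regular for $C_1$, a direct comparison of nonlocal mean curvatures at the smooth touching point already forces $C_1=C_2$, since the difference is the integral of $\widetilde\chi_{C_2}-\widetilde\chi_{C_1}$, which has a sign), then every compact subset of $\Reg C_1$ keeps a positive distance from $0$, so a uniform positive lower bound for $u$ on such compacta is perfectly compatible with the touching at $0$: $u$ is homogeneous of degree one, and $u(x)\ge m|x|$ on the regular part produces no contradiction, while the Harnack constants degenerate as the compact set approaches the singular vertex. Hence ``incompatible with the touching at $0$, which can be approached by regular points'' is not a proof. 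The paper never establishes the equality of tangent cones by a Harnack argument: it reduces to the shared-tangent-cone configuration \eqref{DESM} by the rotation plus dimensional-reduction algorithm (resting on the classification of $s$-minimal cones in the plane), and reserves the Harnack inequality for the normalized separation of the blow-up sequences described above. Finally, writing $\partial C_2$ globally as a normal graph over $\Reg C_1$ is itself unjustified when the two cones are not close to each other.
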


We recall that a better understanding of nonlocal minimal surfaces is important
also in connection with hybrid heat equations (see~\cite{MR2564467}),
geometric motions (see~\cite{MR2487027, MR3401008, MR3713894, MR3778164, MR3951024, MR4104832, MR4175821} ),
nonlocal soap bubble problems (see~\cite{MR3836150, MR3881478}),
capillarity problems (see~\cite{MR3717439, MR4404780}), long-range phase coexistence models (see~\cite{MR2948285, MR4581189}), etc.

Interestingly, the nonlocal perimeter functional interpolates the classical perimeter as~$s\nearrow1$ (see~\cite{MR1945278, MR1942130, MR2782803, MR2765717}) with a suitably weighted Lebesgue measure as~$s\searrow0$ (see~\cite{MR1940355, MR3007726}). In this spirit, on the long run, it is expected that
nonlocal minimal surfaces can serve as auxiliary tools to better understand classical minimal surfaces as well (see~\cite{MR4116635, HARDY, case})
and can provide new perspectives on classical geometric objects by bringing forth tools
alternative to, and
different than, differential geometry (see~\cite{INSPIRED}).\medskip

Other classes of kernels have been considered in the literature, including general singular kernels and
smooth kernels, see e.g.~\cite{MR3981295, MR3930619}. It is of course interesting to investigate to which extent one can generalize the methods presented here to other cases as well.
\medskip

It would be also interesting to understand if a strong maximum principle holds true for stationary points of the nonlocal perimeter too. In the classical case,
results of this type have been obtained in~\cite{MR3268871}
where it is proved that the strict maximum principle holds for stationary codimension~$1$ integral~$(n-1)$-varifolds, when the singular set of at least one of them has zero~$(n-2)$-dimensional
Hausdorff measure.

Furthermore, in this context, it would be worth understanding whether the nonlocal setting could provide even stronger results than the classical one,
valid for two stationary points of the $s$-perimeter with no further assumptions, in particular
with no assumption on the Hausdorff dimension of the singular sets (the standard counterexamples
provided in the classical setting by~\cite{MR3268871} are made up by joining halfplanes at a common line and do not seem to work in the nonlocal framework).
\medskip

The rest of this paper focuses on the proof of Theorem~\ref{MAIN}. Roughly speaking is that,
by a blow-up and dimensional reduction argument, one reduces to the case in which the two $s$-minimal sets under consideration touch at a point which exhibit the same limit cone
for both surfaces.

One then writes a geometric equation coming from the vanishing
of the nonlocal mean curvature of one of these surfaces and a suitable translation of the other one and then scales to have a normalized picture in which the separation between these two sheets is of order one (assuming, arguing by contradiction, that they do not coincide to start with).

Some caveat is in place, since one has to switch from the notion of solution in the smooth or viscosity sense to the one in the weak sense: for this, some energy and capacity estimates will be required.

In this scenario, after a limit procedure, one aims at applying a suitable Harnack Inequality
to obtain a contradiction between a linear separation close to the origin and the assumption that the two surfaces share the same tangent cone.
As usual in geometric
measure theory (see e.g.~\cite{MR906394}),
here and in what follows by the ``same tangent cone''
we mean that any blow-up sequence
possesses a subsequence along which the two dilated surfaces
share the same tangent cone in the limit.
That is, the two surfaces
have the same tangent cone in the ``strong sense'', namely {\em{along each convergent subsequence}} and not only along one particular convergent subsequence.

However, in the above procedure, a number of difficulties emerge, since the equations under consideration
present some complicated kernels, are not defined everywhere, and produce a remainder which needs to be specifically analyzed (in particular, we will need to check that this remainder is bounded and has a sign, to be able to infer uniform H\"older estimates
on the normalized oscillation, pass them to the limit, obtain a limit inequality,
and apply to it an appropriate version of the Harnack Inequality).\medskip

Interestingly, the regularity theory utilized in this process is twofold: first we use
H\"older estimates for a nonlocal equation which is not symmetric
to perform a compactness argument on a sequence of normalized solutions which encode
the distance between the original surfaces and thus obtain a positive viscosity
supersolution on the limit cone for a fractional Jacobi operator, then we apply to this setting the
geometric Harnack Inequality to obtain the desired conclusion.

A slightly more detailed and technical sketch of the proof will be presented in the forthcoming Section~\ref{PLANNP}: after this, we focus on the rigorous arguments needed to make the actual proof work.
More specifically,
we will present in Section~\ref{MAIN:S} a suitable geometric equation which will play
a major role in our analysis. The interplay between viscosity and weak solutions
of this equation will be discussed in Section~\ref{JSOLM-fepkjvf}, through some capacity
and approximation arguments. This will allow us to obtain a viscosity counterpart of
the Harnack Inequality in~\cite[Theorem~1.7]{MR3934589}.

The geometric H\"older estimates needed to pass to the limit the rescaled configuration in which the sheets are separated by an order one will be presented in Section~\ref{LKMHOLE}
(actually, these estimates are of general flavor and can find application in other geometric problems as well).

To apply these H\"older estimates
one will also need a uniform bound on solutions of fractional equations in a geometric setting
and this argument is contained in Section~\ref{LKMHOLE-1}. In turn, in our case, this uniform bound will be a consequence
of an integral bound, which is presented in Section~\ref{INTEBO}.

The proof of Theorem~\ref{MAIN} is thus completed in Section~\ref{HAR:NA}
(the details about the good set of regular boundary points
utilized in the proof being contained in Appendix~\ref{GOODAP}).

\section{The plan of the proof}\label{PLANNP}
The proof will combine the blow-up methods introduced in~\cite{MR906394} with the fine analysis of the nonlocal setting needed in our context. 
The idea of the proof is to consider blow-up sequences of the minimal surfaces
which approach the same limit cone at the origin and
take a normal parameterization of these surfaces
away from the singular set.
We then look at the difference between these parameterizations, say~$w$ (or, more precisely, $w_k$, since
it depends on the step). The inclusion of the two sets guarantees that~$w$ has a sign, say~$w\ge0$ (and, by contradiction, we can suppose that~$w>0$ at some point).

The function~$w$ may behave very badly in the vicinity of the singular set, thus we perform our analysis in the complement of a neighborhood of the singular set (we will shrink this neighborhood at the end of the proof, relying on the regularity theory of the nonlocal minimal surfaces). Moreover, since the convergence of~$w=w_k$ is only local, we restrict our attention to a given ball~$B_R$ (we will send~$R\to+\infty$ in the end).

Roughly speaking, the gist is to choose a point~$ x_\star$ (in~$B_R$ and away from a given small neighborhood of the singular set) and a sequence of sets so that 
\begin{equation}\label{cNosVy6hgdM0ojl23fev2} 2 w_k (x_\star) \ge t^{-1} w_k (t x_\star)\end{equation}
for all~$t \in (0,1]$: it is indeed possible to fulfill this inequality (or a slight modification of it)
since the two minimal surfaces share the same tangent cone (hence their difference is ``sublinear'' at the origin, see Appendix~\ref{GOODAP} for full details
about this technical construction).
We can also normalize~$w_k$ at~$x_\star$ to be~$1$, i.e. look at the normalized function~$v_k:=\frac{w_k}{w_k(x_\star)}$.

In this way, we see that~$w=w_k$ (and therefore~$v=v_k$) satisfies a suitable
nonlocal equation of geometric type (in view of Theorem~\ref{PRIMOLE-TH})
and we show that~$v$ remains locally bounded (owing to Lemma~\ref{VOL})
and therefore H\"older continuous in compact sets that avoid the singular set
(thanks to Lemma~\ref{DERGED-Ge}).

We can thereby pick a convergent subsequence to a limit function~$v_\infty$ which is a positive supersolution to a suitable Jacobi operator on the cone in the viscosity sense. {F}rom this,
we will apply our viscosity version of the weak Harnack Inequality of~\cite{MR3934589} (as stated in Proposition~\ref{LANSCDPOJFIBFerngvny83iurjf})
and deduce that~$v_\infty$ is locally bounded below by a universal constant and we contradict in this way the inequality in~\eqref{cNosVy6hgdM0ojl23fev2} (normalized and passed to the limit).

The details to carry over this plan are very technical and several complications arise
from the nonstandard form of the integro-differential equations involved (such as the domain of integration not being flat and the kernels involved not being symmetric) and by the fact of dealing with non-standard domains of definitions (such as large balls with neighbourhood of singular sets removed). These important technicalities occupy the rest of this paper.

\section{A geometric equation}\label{MAIN:S}

Now we consider the case in which two $s$-minimal sets, one included into the other, can be written locally
as a graph in terms of another smooth hypersurface (concretely, in our case, the regular part of their common limit cone, though the arguments presented are of general flavor).

In this scenario, we obtain a geometric inequality for the difference between the parameterizations of the nonlocal minimal surfaces, which is reminiscent of the Jacobi field for nonlocal minimal surfaces (see e.g.~\cite{MR3798717, MR3951024}). The advantage of this approach in general
is that these Jacobi fields leverage additional cancellations with respect to the two individual equations for the nonlocal mean curvature of the two surfaces.
The specialty of our setting is however that the two nonlocal minimal surfaces are nice graphs with respect to the limit cone only away from the possible singularities of the cone (hence, in our application,
we will have to consider graphs converging to the limit cone away from its singularities and carefully estimate the error terms).

Actually, the methods that we develop work in a greater generality, which is also useful to appreciate the geometric structures arising in the limit construction.
Thus, we split our calculations in different regions of the space
(namely, near a regular point in Lemma~\ref{PRIMOLE},
in an intermediate ring in Lemma~\ref{PRIMOLE2}, and far away in Lemma~\ref{PRIMOLE3}) and we collect these calculations
into the general form given by Theorem~\ref{PRIMOLE-TH}.

This type of results are influenced by the classical theory of minimal surfaces but present significant differences with respect to the classical case. For instance, in the local setting,
one can apply directly the theory of Jacobi fields and obtain a linear equation describing the normal displacement of a surface approaching a limit cone (see equation~(7) in~\cite{MR906394}).
Instead, in the nonlocal setting, remote interactions highly complicate the structure of the equations and a fine theory of cancellations is needed to obtain the convergence of the desired approximation and effective estimates on the remainder terms.

In the forthcoming calculations,
we use the notation \begin{equation*} \widetilde\chi_E(x):=\chi_{\R^n\setminus E}(x)-\chi_{E}(x)=\begin{cases}
1 &{\mbox{ if }}x\in\R^n\setminus E,\\
-1&{\mbox{ if }}x\in E.
\end{cases}\end{equation*}

\begin{center}
\begin{figure}[h]\tcbox{
\includegraphics[width=8.7cm]{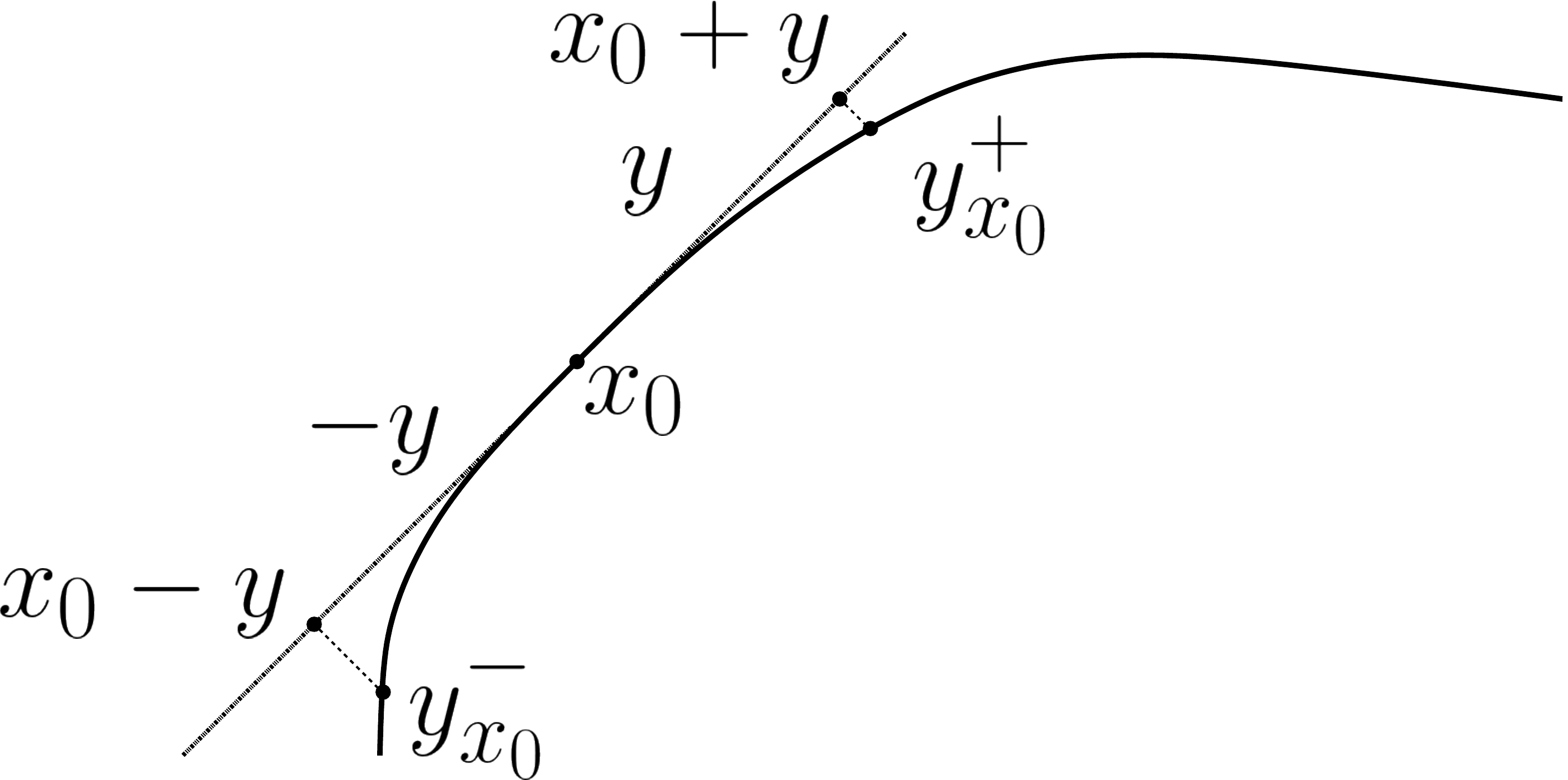}}
\caption{\footnotesize{\it Projections along the tangent hyperplane.}}\label{pkdf:A}
\end{figure}\end{center}

Also, here below we will take into consideration suitable kernels defined on a hypersurface, specifically,
a certain regular subset of~$\partial E_1$, with~$E_1\subseteq\R^n$: in this situation, given~$x_0$ on this hypersurface and
a vector~$y$ (with small norm) in the linear tangent hyperplane at~$x_0$ we will denote by~$y^+_{x_0}$
the point on the hypersurface whose projection onto the affine tangent hyperplane at~$x_0$ coincides with~$x_0+y$
and by~$y^-_{x_0}$
the point on the hypersurface whose projection onto the affine tangent hyperplane at~$x_0$ coincides with~$x_0-y$, see Figure~\ref{pkdf:A}.

Moreover, in what follows, given a point~$X$ in a small ball~$B_\delta(\overline X)$, we will consider the normal coordinates~$X = x + t\nu(x)$, 
with~$ x\in\partial E_1$ and~$t\in\R$. In principle, $x$ does not necessarily belong
to~$B_\delta(\overline X)$ (since it could falls slightly outside of it). Therefore, it is convenient to use the notation~${\mathcal{B}}_\delta(\overline X)$
to denote an open set of~$\R^n$, possibly slightly larger than~$B_\delta(\overline X)$, which contains all the points~$x$ above.

\begin{lemma}\label{PRIMOLE}
Let~$E_1\subseteq E_2\subseteq\R^n$.

Suppose that there exist~$\overline{X}\in\R^n$ and~$\delta>0$ such that~$(\partial E_1)\cap B_\delta(\overline X)$
is a hypersurface of class~$C^{2}$.
Let~$\nu$ be the unit external normal to~$E_1$ in~$B_\delta(\overline X)$.

Suppose that
$$ (E_2\setminus E_1)\cap B_\delta(\overline X)=\Big\{
x+t\nu(x),\;{\mbox{with }} x\in\partial E_1,\; t\in\big[0,w(x)\big)
\Big\}\cap B_\delta(\overline X),$$
for some function~$w$ of class~$C^{2}$ with values in~$[0,\delta/4]$.

Given~$x_0\in (\partial E_1)\cap B_{\delta/8}(\overline X)$, let
$$ \widetilde E_1:=E_1+w(x_0)\nu(x_0)
\qquad{\mbox{and}}\qquad X_0:=x_0+w(x_0)\nu(x_0).$$

Then, if~$\delta$ and~$\|w\|_{C^2({{\mathcal{B}}_\delta}(\overline X))}$ are small enough
with respect to structural quantities and the local~$C^2$-norm of~$\partial E_1$,
\begin{equation}\label{ICLN6tao2o3ir4}
\begin{split}& \frac12
\int_{B_{\delta/4}(\overline X)}\frac{\widetilde\chi_{E_2}(X)-\widetilde\chi_{\widetilde E_1}(X)}{|X-X_0|^{n+s}}\,dX\\&\qquad=\int_{(\partial E_1)\cap {{\mathcal{B}}_{\delta/4}}(\overline X)}
\big( w(x_0)-w(x)\big)\,K_1(x,x_0)
\,d{\mathcal{H}}^{n-1}_x\\&\qquad\qquad
-w(x_0)\int_{(\partial E_1)\cap {{\mathcal{B}}_{\delta/4}}(\overline X)}
\big(1-\nu(x_0)\cdot\nu(x)\big)\,K_2(x,x_0)
\,d{\mathcal{H}}^{n-1}_x+O(w^2(x_0)),\end{split}\end{equation}
where~$K_j:({(\partial E_1)\cap {{\mathcal{B}}_{\delta/4}}(\overline X)})^2\to[0,+\infty]$, with~$j\in\{1,2\}$, satisfies, for a suitable~$C\ge1$, that
\begin{equation}\label{PKSLM-CANDLEDLWD}\begin{split}&
{\mbox{for all~$x\ne x_0$, we have that }}\;
\frac{1}{C|x-x_0|^{n+s}} \leq K_j(x,x_0)\le \frac{C}{|x-x_0|^{n+s}},\\ &{\mbox{and, for all~$y\in B_\delta\setminus\{0\}$, }}
K_j(y^+_{x_0},x_0)-K_j(y^-_{x_0},x_0)=O(|y|^{1-n-s})
.\end{split}\end{equation}

The ``big~$O$'' terms and the constant~$C$ here above depend only
on the curvatures of~$\partial E_1$ in~$B_{9\delta/10}(\overline X)$, on~$\|w\|_{C^2({{\mathcal{B}}_\delta}(\overline X))}$, on~$n$ and~$s$.

Also, the kernel~$K_j$ approaches pointwise, up to a normalizing constant, the kernel~$|x-x_0|^{-n-s}$ when~$\|w\|_{C^2({{\mathcal{B}}_\delta}(\overline X))}$ tends to zero.
\end{lemma}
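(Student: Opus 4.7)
The plan is to express the left-hand side of \eqref{ICLN6tao2o3ir4} as an integral over the symmetric difference $\widetilde E_1\triangle E_2$, which is concentrated near $\partial E_1$, to parametrize this region using normal coordinates to $\partial E_1$, and to extract the two advertised geometric contributions via a first-order Taylor expansion of the signed distance $d$ to $\partial E_1$.

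Using $\widetilde\chi_E = 1 - 2\chi_E$, the integrand of \eqref{ICLN6tao2o3ir4} equals $\chi_{\widetilde E_1} - \chi_{E_2}$. Since $E_1\subseteq E_2$ and $\widetilde E_1 = E_1 + w(x_0)\nu(x_0)$, this integrand is supported in a tubular neighborhood of $(\partial E_1)\cap B_{\delta/4}(\overline X)$. For $\delta$ small, the normal parametrization $(x,t)\mapsto X:=x+t\nu(x)$ is a diffeomorphism there, with Jacobian $J(x,t)=1+O(|t|)$. By the graph hypothesis on $E_2\setminus E_1$, $\chi_{E_2}(X)=\chi_{\{t<w(x)\}}$; and since $d(x+t\nu(x))=t$ and $\nabla d(x+t\nu(x))=\nu(x)+O(|t|)$, the second-order expansion
$$d\bigl(X-w(x_0)\nu(x_0)\bigr)=d(X)-w(x_0)\,\nu(x_0)\cdot\nabla d(X)+O\bigl(w(x_0)^2\bigr)$$
yields $\chi_{\widetilde E_1}(X)=\chi_{\{t\le w(x_0)\nu(x_0)\cdot\nu(x)+R(x,t,x_0)\}}$ with $|R|\le C(w(x_0)^2+w(x_0)|t|)$. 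Integrating in $t$ against $J(x,t)\,|X-X_0|^{-n-s}$, we obtain for each $x$ a signed $t$-interval contribution of length $w(x_0)\nu(x_0)\cdot\nu(x)-w(x)$ modulo $R$, and the algebraic identity
$$w(x_0)\nu(x_0)\cdot\nu(x)-w(x)=(w(x_0)-w(x))-w(x_0)(1-\nu(x_0)\cdot\nu(x))$$
produces \eqref{ICLN6tao2o3ir4} with nonnegative kernels $K_j(x,x_0)$ defined as averages of $J(x,t)\,|x+t\nu(x)-X_0|^{-n-s}$ over the relevant short $t$-interval.

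The two-sided bound $C^{-1}|x-x_0|^{-n-s}\le K_j\le C|x-x_0|^{-n-s}$ follows from $J\in[1/2,2]$ and $|x+t\nu(x)-X_0|\asymp|x-x_0|$ in the dominant regime (close-range contributions get absorbed into the $O(w^2)$ remainder). The antisymmetry bound $K_j(y^+_{x_0},x_0)-K_j(y^-_{x_0},x_0)=O(|y|^{1-n-s})$ is obtained by writing $K_j(x,x_0)=|x-x_0|^{-n-s}\,\Psi(x,x_0)$ with $\Psi$ of class $C^1$ in $x$: since $|y^\pm_{x_0}-x_0|=|y|(1+O(|y|^2))$, the radial singular factor contributes only $O(|y|^{2-n-s})$, while $\Psi(y^+_{x_0})-\Psi(y^-_{x_0})=O(|y|)$ produces the main $O(|y|^{1-n-s})$ term. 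That $K_j$ approaches $|x-x_0|^{-n-s}$ as $\|w\|_{C^2}\to 0$ is then read off from the formulas, since every correction factor carries an explicit $\|w\|_{C^2}$ dependence.

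The main obstacle is packaging every quadratic remainder into the $O(w^2(x_0))$ error uniformly in $x_0$. The most delicate piece is the cross term $\int w(x_0)|t|/|x-x_0|^{n+s}$ stemming from $R$: this requires splitting the tangential integration, using that $|t|\le C(w(x_0)+\|w\|_{C^1}|x-x_0|)$ on the relevant $t$-range, and exploiting the $(n-1)$-dimensional integrability of $|x-x_0|^{1-n-s}$ near $x_0$, so that the total cross-contribution is bounded by $O(w^2(x_0))$ with constant depending on $\|w\|_{C^2}$ and on the curvatures of $\partial E_1$ in $B_{9\delta/10}(\overline X)$.
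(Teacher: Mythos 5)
Your overall skeleton (normal coordinates $X=x+t\nu(x)$, fiberwise $t$-integration against $J(x,t)\,|X-X_0|^{-n-s}$, the algebraic identity $w(x_0)\nu(x_0)\cdot\nu(x)-w(x)=(w(x_0)-w(x))-w(x_0)(1-\nu(x_0)\cdot\nu(x))$, and the kernel bounds) is the same as the paper's, but there is a genuine gap at the crucial quantitative step: your expansion of the shifted set's threshold, $\chi_{\widetilde E_1}(x+t\nu(x))=\chi_{\{t\le w(x_0)\nu(x_0)\cdot\nu(x)+R\}}$ with only $|R|\le C\big(w(x_0)^2+w(x_0)|t|\big)$, is too crude to yield the stated error $O(w^2(x_0))$. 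A threshold error of size $w(x_0)^2$, uniform in $x$, gets multiplied by $K(x,x_0)\asymp|x-x_0|^{-n-s}$ and integrated in $d{\mathcal H}^{n-1}_x$, producing $w^2(x_0)\int_0^{\delta}\rho^{-n-s}\rho^{n-2}\,d\rho=w^2(x_0)\int_0^\delta\rho^{-2-s}\,d\rho$, which diverges; so with your bound the near-$x_0$ contribution is not even finite, let alone $O(w^2(x_0))$. The missing ingredient is the cancellation the paper exploits: only the component of the shift $w(x_0)\nu(x_0)$ tangential to the tangent hyperplane at $x$ matters (equivalently, $D^2 d$ annihilates $\nu(x)$), and this tangential component has length $w(x_0)\sqrt{1-(\nu(x_0)\cdot\nu(x))^2}=O(w(x_0)|x-x_0|)$; combined with the quadratic detachment of $\partial E_1$ from its tangent plane, this gives the sharper threshold error $\widetilde w(x)=w(x_0)\nu(x_0)\cdot\nu(x)+O\big(w^2(x_0)|\nu(x)-\nu(x_0)|^2\big)=w(x_0)\nu(x_0)\cdot\nu(x)+O\big(w^2(x_0)|x-x_0|^2\big)$, whose contribution $w^2(x_0)\int_0^\delta\rho^{2-n-s}\rho^{n-2}\,d\rho=w^2(x_0)\int_0^\delta\rho^{-s}\,d\rho$ is indeed $O(w^2(x_0))$.

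Relatedly, your proposed handling of the ``most delicate piece'' rests on a false integrability claim: $|x-x_0|^{1-n-s}$ is \emph{not} integrable over an $(n-1)$-dimensional surface near $x_0$, since $\int_0^\delta\rho^{1-n-s}\rho^{n-2}\,d\rho=\int_0^\delta\rho^{-1-s}\,d\rho=+\infty$ (the integrable borderline is exponent strictly greater than $1-n$). So the cross term $w(x_0)\|w\|_{C^1}|x-x_0|\cdot|x-x_0|^{-n-s}$ cannot be absorbed this way; one really needs the extra factor $|x-x_0|^2$ coming from the tangential-projection argument above. Once the refined expansion of $\widetilde w$ is in place, the rest of your argument (single averaged kernel split by the algebraic identity, two-sided bounds from $J\asymp 1$ and $|x+t\nu(x)-X_0|\asymp|x-x_0|$, near-antisymmetry via $C^1$ dependence of the non-singular factor, and the limit of $K_j$ as $\|w\|_{C^2}\to0$) proceeds essentially as in the paper.
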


We point out that, consistently with the definition of~$y^\pm_{x_0}$ given above, the point~$y$ in~\eqref{PKSLM-CANDLEDLWD}
is implicitly assumed to lie in the linear tangent hyperplane at~$x_0$.

\begin{proof}[Proof of Lemma~\ref{PRIMOLE}] Given~$X \in B_\delta(\overline X)$, we write~$X=x+t\nu(x)$, with~$x\in\partial E_1$ and~$t\in\R$.
If~$\delta>0$ is small enough, the map linking~$X$ to~$(x,t)$ is a diffeomorphism and the intersection with~$\partial E_1$ occurs only for~$t=0$.

Thus, we denote the corresponding geometric Jacobian determinant by~$ J(x,t)$ (set to zero when~$x+t\nu(x)\not\in B_{\delta/4}(\overline X)$)
and we have that
\begin{equation}\label{CAL-01}\begin{split}&
\int_{B_{\delta/4}(\overline X)}\frac{\widetilde\chi_{E_2}(X)-\widetilde\chi_{\widetilde E_1}(X)}{|X-X_0|^{n+s}}\,dX\\&\qquad=
\iint_{((\partial E_1)\cap {{\mathcal{B}}_{\delta/4}}(\overline X))\times\R}
\frac{\widetilde\chi_{E_2}(x+t\nu(x))-\widetilde\chi_{\widetilde E_1}(x+t\nu(x))}{|x-x_0+t\nu(x)-w(x_0)\nu(x_0)|^{n+s}}\,J(x,t)\,d{\mathcal{H}}^{n-1}_x\,dt.\end{split}
\end{equation}

Now, given~$x\in(\partial E_1)\cap {{\mathcal{B}}_{\delta/4}}(\overline X)$, we have that
$$ \widetilde\chi_{E_2}(x+t\nu(x))=\begin{cases} 1 & {\mbox{ if }}t\ge w(x),\\-1& {\mbox{ if }}t< w(x).
\end{cases}$$
Also, $\widetilde\chi_{\widetilde E_1}(x+t\nu(x))=-1$ if and only if~$x+t\nu(x)\in \widetilde E_1$, and so if and only if~$x(t):=x+t\nu(x)-w(x_0)\nu(x_0)\in E_1$.

Now, the signed distance of~$x(t)$ to the tangent hyperplane of~$E_1$ at~$x$ is equal to
\begin{eqnarray*}
d(x):=(x(t)-x)\cdot\nu(x)=t-w(x_0)\nu(x_0)\cdot\nu(x).\end{eqnarray*}
Moreover, the projection of~$x(t)-x$ onto the tangent plane is
\begin{eqnarray*}&& (x(t)-x)- \big((x(t)-x)\cdot\nu(x)\big)\nu(x)
=t\nu(x)-w(x_0)\nu(x_0)-\big(t-w(x_0)\nu(x_0)\cdot\nu(x)\big)\nu(x)\\&&\qquad
=-w(x_0)\nu(x_0)+w(x_0)\nu(x_0)\cdot\nu(x)\nu(x)=w(x_0)\Big(\nu(x_0)\cdot\nu(x)\nu(x)-\nu(x_0)\Big),
\end{eqnarray*}
which has length equal to
$$ w(x_0)\Big|\nu(x_0)\cdot\nu(x)\nu(x)-\nu(x_0)\Big|=w(x_0)\sqrt{1-(\nu(x_0)\cdot\nu(x))^2}.
$$
Since~$\partial E_1$ detaches at most quadratically from its tangent hyperplane, we have that
$$ \widetilde\chi_{\widetilde E_1}(x+t\nu(x))=\begin{cases} 1 & {\mbox{ if }}t\ge \widetilde w(x),\\-1& {\mbox{ if }}t< \widetilde w(x),
\end{cases}$$
where
\begin{equation}\label{7648tuygfusjkfdsjhsyr874ywiu} \widetilde w(x):=w(x_0)\nu(x_0)\cdot\nu(x)+
O\big(w^2(x_0)|\nu(x)-\nu(x_0)|^2\big).\end{equation}
See Figure~\ref{tange2B}.

\begin{center}
\begin{figure}[h]\tcbox{
\includegraphics[width=8.7cm]{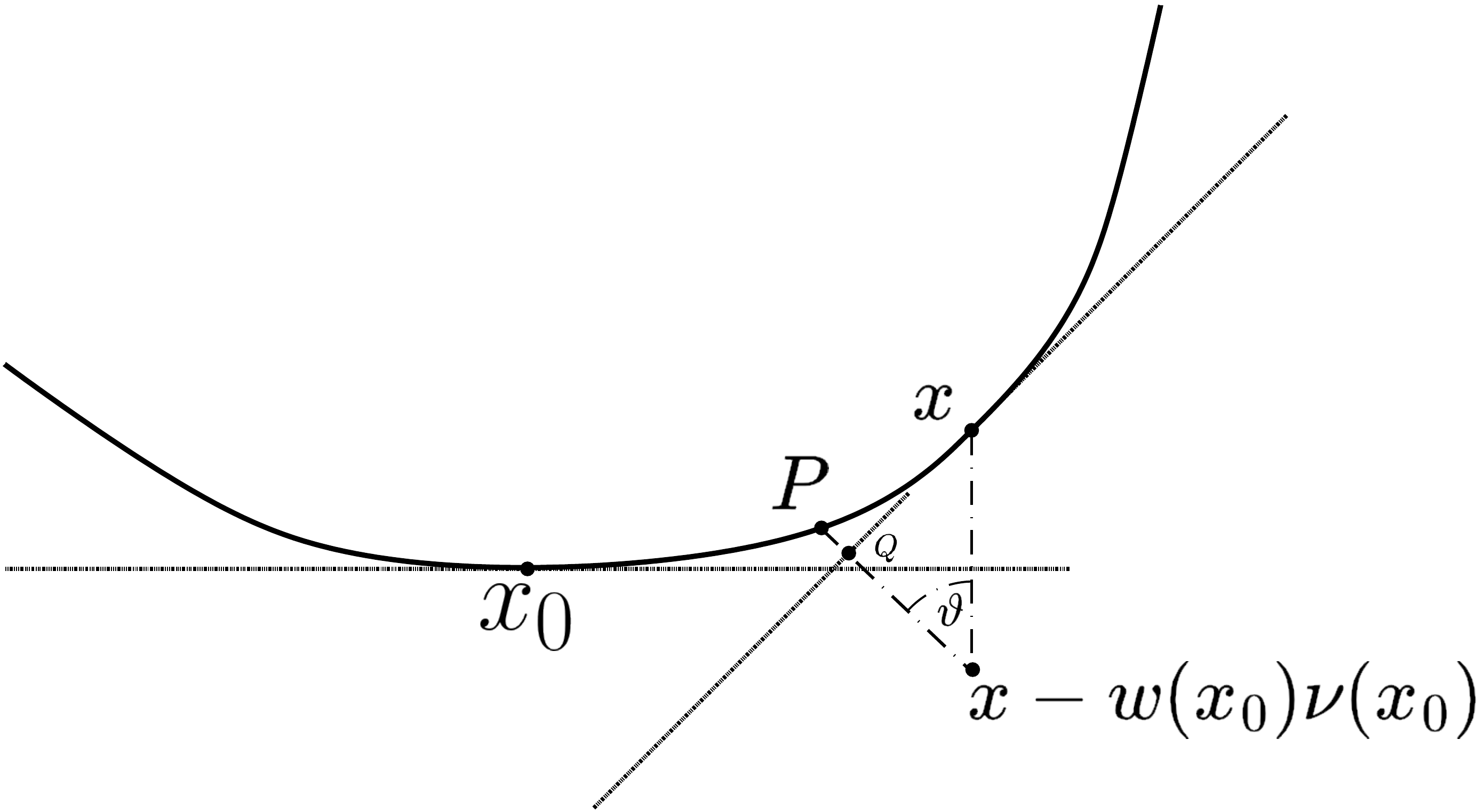}}
\caption{\footnotesize{\it Diagram to visualize~\eqref{7648tuygfusjkfdsjhsyr874ywiu}.
Here, $\vartheta$ denotes the angle between~$\nu(x_0)$ and~$\nu(x)$, the distance between~$x-w(x_0)\nu(x_0)$ and~$Q$ corresponds to~$w(x_0)\cos\vartheta$, the distance between~$x$ and~$Q$ corresponds to~$w(x_0)\sin\vartheta$ (implying that the distance between~$P$ and~$Q$ is of the order of~$w^2(x_0)\sin^2\vartheta$, with constants depending on the smoothness of~$E_1$.}}\label{tange2B}
\end{figure}\end{center}

{F}rom these observations, we infer that, for all~$x\in(\partial E_1)\cap {{\mathcal{B}}_{\delta/4}}(\overline X)$,
\begin{eqnarray*}&&
\int_\R \frac{\widetilde\chi_{E_2}(x+t\nu(x))}{|x-x_0+t\nu(x)-w(x_0)\nu(x_0)|^{n+s}}\,J(x,t)\,dt=h_1(w(x))-h_2(w(x))\\
{\mbox{and }}&&
\int_\R \frac{\widetilde\chi_{\widetilde E_1}(x+t\nu(x))}{|x-x_0+t\nu(x)-w(x_0)\nu(x_0)|^{n+s}}\,J(x,t)\,dt=h_1(\widetilde w(x))-h_2(\widetilde w(x)),
\end{eqnarray*}
where
\begin{equation}\label{Hdef-002}\begin{split}& h_1(\xi):=
\int_{\xi}^{+\infty}\frac{J(x,t)\,dt}{|x-x_0+t\nu(x)-w(x_0)\nu(x_0)|^{n+s}}\\
{\mbox{and }}\quad& h_2(\xi):=
\int_{-\infty}^{\xi} \frac{J(x,t)\,dt}{|x-x_0+t\nu(x)-w(x_0)\nu(x_0)|^{n+s}}.
\end{split}\end{equation}

We insert this information into~\eqref{CAL-01} and we find that
\begin{equation}\label{1312rfg5rdiqknwfem}
\begin{split}&
\int_{B_{\delta/4}(\overline X)}\frac{\widetilde\chi_{E_2}(X)-\widetilde\chi_{\widetilde E_1}(X)}{|X-X_0|^{n+s}}\,dX\\&\qquad=
\int_{(\partial E_1)\cap {{\mathcal{B}}_{\delta/4}}(\overline X)}
\Big( h_1(w(x))-h_1(\widetilde w(x))-h_2(w(x))
+h_2(\widetilde w(x)) \Big)\,d{\mathcal{H}}^{n-1}_x\\
&\qquad=
\int_{(\partial E_1)\cap {{\mathcal{B}}_{\delta/4}}(\overline X)}
\Big( h(w(x))-h(\widetilde w(x))\Big)\,d{\mathcal{H}}^{n-1}_x,
\end{split}
\end{equation}
where
\begin{equation}\label{Hdef-003}h:=h_1-h_2.\end{equation}

We also observe that
\begin{equation}\label{HPojasdJJ}-h'(\xi)=
-h_1'(\xi)+h_2'(\xi)=
\frac{2J(x,\xi)}{|x-x_0+\xi\nu(x)-w(x_0)\nu(x_0)|^{n+s}}.
\end{equation}
Moreover, we remark that, 
\begin{equation}\label{tryeufghds65748yhfsuyfr3t7trfgdka}
{\mbox{as~$|x-x_0|\to0$, the vector~$x-x_0$ becomes tangent to~$\partial E_1$.}}\end{equation}

Now we let
\begin{eqnarray*}&&A(x_0):={\rm Id}+w(x_0)I\!I(x_0)\\{\mbox{and }}&&
\sigma(x):=A(x_0)(x-x_0)=\big({\rm Id}+w(x_0)I\!I(x_0)\big)(x-x_0),
\end{eqnarray*}
where~$I\!I$ denotes the second fundamental form along~$\partial E_1$, extended outside the tangent space by orthogonal projection.

We observe that, for small~$w(x_0)$, 
\begin{equation}\label{quellochestoper}
|\sigma(x)|\in \left[ \frac{|x-x_0|}2,2|x-x_0|\right].\end{equation}
Here below we will take~$a$ and~$b\in\R$ with~$|a|+|b|\le C|x-x_0|$, with~$C>0$ depending on the regularity of~$w$ and~$\partial E_1$, and
we set~$\theta:=\frac{\xi-w(x_0)}{|\sigma(x)|}$ for~$\xi\in(a+w(x_0),b+w(x_0))$.
In this way, $\theta$ is bounded, thanks to~\eqref{quellochestoper}, and
we have that
\begin{equation}\label{APksjdlwRSBSQ}\begin{split}&
\frac{h(a+w(x_0))-h(b+w(x_0))}2\\=\,&-\frac12\int_{a+w(x_0)}^{b+w(x_0)} h'(\xi)\,d\xi\\=\,&\int_{a+w(x_0)}^{b+w(x_0)}
\frac{J(x,\xi)\,d\xi}{|x-x_0+\xi\nu(x)-w(x_0)\nu(x_0)|^{n+s}}\\=\,&\int_{a/|\sigma(x)|}^{b/|\sigma(x)|}
\frac{|\sigma(x)|\,J\big(x,w(x_0)+\theta|\sigma(x)|\big)\,d\theta}{\Big|x-x_0+\big(w(x_0)+\theta|\sigma(x)|\big)\nu(x)-w(x_0)\nu(x_0)\Big|^{n+s}}.
\end{split}\end{equation}

Also, recalling~\eqref{tryeufghds65748yhfsuyfr3t7trfgdka}, we have that
$$(x-x_0)\cdot\nu(x)=O(|x-x_0|^2)$$
and
\begin{equation*}
\nu(x)-\nu(x_0)=I\!I(x_0)(x-x_0)+O(|x-x_0|^2).
\end{equation*}
For this reason,
\begin{eqnarray*}&&
\big(w(x_0)+\theta|\sigma(x)|\big)\nu(x)-w(x_0)\nu(x_0)\\&=&
\big(w(x_0)+\theta|\sigma(x)|\big)\big(\nu(x_0)+I\!I(x_0)(x-x_0)+O(|x-x_0|^2)\big)
-w(x_0)\nu(x_0)\\&=&w(x_0)I\!I(x_0)(x-x_0)+
\theta|\sigma(x)|\nu(x_0)+O(|x-x_0|^2).
\end{eqnarray*}
As a result,
\begin{eqnarray*}
&&\Big|x-x_0+\big(w(x_0)+\theta|\sigma(x)|\big)\nu(x)-w(x_0)\nu(x_0)\Big|^{2}
\\&=&\Big|\big({\rm Id}+w(x_0)I\!I(x_0)\big)(x-x_0)+\theta|\sigma(x)|\nu(x_0)+O(|x-x_0|^2)\Big|^{2}\\&=&\Big|\sigma(x)+\theta|\sigma(x)| \nu(x_0)+O(|x-x_0|^2)\Big|^{2}\\
&=&(1+\theta^2)|\sigma(x)|^2+2\theta|\sigma(x)| \sigma(x)\cdot\nu(x_0)+O(|x-x_0|^3).
\end{eqnarray*}

Actually, since
\begin{eqnarray*}
\sigma(x)\cdot\nu(x_0)&=&
\big(w(x_0)I\!I(x_0)(x-x_0)\big)\cdot\nu(x_0)+O(|x-x_0|^2)\\
&=&w(x_0)\big(\nu(x)-\nu(x_0)\big)\cdot\nu(x_0)+O(|x-x_0|^2)\\&=&
w(x_0)\big(\nu(x)\cdot\nu(x_0)-1\big)+O(|x-x_0|^2)\\&=&
-\frac{w(x_0)}2\big|\nu(x)-\nu(x_0)\big|^2+O(|x-x_0|^2)\\&=&O(|x-x_0|^2),
\end{eqnarray*}
we find that
\begin{eqnarray*}
&&\Big|x-x_0+\big(w(x_0)+\theta|\sigma(x)|\big)\nu(x)-w(x_0)\nu(x_0)\Big|^{2}
=(1+\theta^2)|\sigma(x)|^2+O(|x-x_0|^3).
\end{eqnarray*}

This gives that
\begin{eqnarray*}&&
\Big|x-x_0+\big(w(x_0)+\theta|\sigma(x)|\big)\nu(x)-w(x_0)\nu(x_0)\Big|^{-(n+s)}\\&=&
\Big( (1+\theta^2)|\sigma(x)|^2+O(|x-x_0|^3)\Big)^{-\frac{n+s}2}\\&=&
(1+\theta^2)^{-\frac{n+s}2}|\sigma(x)|^{-(n+s)}\Big( 1+O(|x-x_0|)\Big)^{-\frac{n+s}2}\\&=&
(1+\theta^2)^{-\frac{n+s}2}|\sigma(x)|^{-(n+s)}\Big( 1+O(|x-x_0|)\Big)\end{eqnarray*}
and consequently, in view of~\eqref{APksjdlwRSBSQ},
\begin{equation}\label{09iuhg098uytfdP-0ipejro3fen}\begin{split}&
\frac{h(a+w(x_0))-h(b+w(x_0))}2\\&\qquad=\int_{a/|\sigma(x)|}^{b/|\sigma(x)|}
\frac{|\sigma(x)|\,J\big(x,w(x_0)+\theta|\sigma(x)|\big)\,\big(1  +O(|x-x_0|)\big)}{
(1+\theta^2)^{\frac{n+s}2} |\sigma(x)|^{n+s} }\,d\theta.
\end{split}\end{equation}

Now we take
$$ a:=w(x)-w(x_0)\qquad{\mbox{and}}\qquad b:=\widetilde w(x)-w(x_0).$$ We observe that
$$ |w(x)-w(x_0)|=O\big(\|\nabla w\|_{L^\infty(B_\delta(\overline X))}|x-x_0|\big)$$
and
\begin{eqnarray*} |\widetilde w(x)-w(x_0)|&\le& w(x_0)\big|\nu(x_0)\cdot\nu(x)-1\big|+
O\big(w^2(x_0)|x-x_0|^2\big)\\&\le& O\big(w(x_0)|x-x_0|^2\big).\end{eqnarray*}
This says that, in our setting,
\begin{equation}\label{uwoejfn39-wedf}
\frac{a}{|\sigma(x)|}=O\big( \|\nabla w\|_{L^\infty({{\mathcal{B}}_\delta}(\overline X))}\big)\qquad{\mbox{and}}\qquad
\frac{b}{|\sigma(x)|}=O\big(w(x_0)|x-x_0|\big).\end{equation}
Therefore, with reference to the integral in~\eqref{09iuhg098uytfdP-0ipejro3fen}, in this setting we have that $\theta=O\big( \|w\|_{C^1({{\mathcal{B}}_\delta}(\overline X))}\big)$ and therefore
$$ J\big(x,w(x_0)+\theta|\sigma(x)|\big)=J\big(x,w(x_0)\big)+O\big(|x-x_0|\big)=J\big(x_0,w(x_0)\big)+O\big(|x-x_0|\big),$$
with the latter quantity depending on the curvatures of~$\partial E_1$ in~$B_{9\delta/10}(\overline X)$, on $\|w\|_{C^2({{\mathcal{B}}_\delta}(\overline X))}$, on~$n$ and~$s$.

Gathering this and~\eqref{09iuhg098uytfdP-0ipejro3fen} we conclude that
\begin{equation*}\begin{split}&
\frac{h(a+w(x_0))-h(b+w(x_0))}2\\&\qquad=
\frac{|\sigma(x)|\,\big(J(x_0,w(x_0))  +O(|x-x_0|)\big)}{|\sigma(x)|^{n+s} }\,\int_{a/|\sigma(x)|}^{b/|\sigma(x)|}
\frac{d\theta}{
(1+\theta^2)^{\frac{n+s}2}}\\&\qquad=
\frac{J(x_0,w(x_0))  +O(|x-x_0|)}{|\sigma(x)|^{n+s} }\,\left(
b\Phi\left(\frac{b}{|\sigma(x)|}\right)-a\Phi\left(\frac{a}{|\sigma(x)|}\right)
\right),\end{split}\end{equation*}
where
$$ \Phi(\alpha):=\frac1{\alpha}\int_0^{\alpha}
\frac{d\theta}{(1+\theta^2)^{\frac{n+s}2}}.$$

Now we let
\begin{equation}\label{311}
{\mathcal{A}}_1(x,x_0):=
\Phi\left(\frac{w(x)-w(x_0)}{|\sigma(x)|}\right)\qquad{\mbox{and}}\qquad
{\mathcal{A}}_2(x,x_0):=
\Phi\left(\frac{\widetilde w(x)-w(x_0)}{|\sigma(x)|}\right)
\end{equation}
and we see that
\begin{eqnarray*}&&
\frac{h(w(x))-h(\widetilde w(x))}2\\&=&
\frac{J(x_0,w(x_0))  +O(|x-x_0|)}{|\sigma(x)|^{n+s} }\,\Big(
(\widetilde w(x)-w(x_0)){\mathcal{A}}_2(x,x_0)-(w(x)-w(x_0)){\mathcal{A}}_1(x,x_0)
\Big)\\&=&
\frac{J(x_0,w(x_0))  +O(|x-x_0|)}{|\sigma(x)|^{n+s} }\\ &&
\times\Big[
\Big(w(x_0) (\nu(x_0)\cdot\nu(x)-1)+O(w^2(x_0)|x-x_0|^2)\Big){\mathcal{A}}_2(x,x_0)-(w(x)-w(x_0)){\mathcal{A}}_1(x,x_0)
\Big]\\&=&
\Big(w(x_0) (\nu(x_0)\cdot\nu(x)-1)+O(w^2(x_0)|x-x_0|^2)\Big)K_2(x,x_0)-(w(x)-w(x_0))K_1(x,x_0),
\end{eqnarray*}
where $$ K_j(x,x_0):=
\frac{\big(J(x_0,w(x_0))  +O(|x-x_0|)\big)\,{\mathcal{A}}_j(x,x_0)}{|\sigma(x)|^{n+s} }
.$$
By inserting this into~\eqref{1312rfg5rdiqknwfem},
we have thereby obtained the desired result in~\eqref{ICLN6tao2o3ir4},
but we need to check~\eqref{PKSLM-CANDLEDLWD} in order to ensure the necessary cancellations to make sense of the integrals involved.

To this end, we observe that the Jacobian~$J(x_0,t)$ approaches~$1$ as~$t\to0$.
Moreover, 
$$ \Phi(\alpha)\le\frac1{\alpha}\int_0^{\alpha}
\,d\theta=1$$
and, if~$\alpha\in(-1,1)$,
$$ \Phi(\alpha)\ge \frac1{\alpha}\int_0^{\alpha}
\frac{d\theta}{2^{\frac{n+s}2}}=\frac1{2^{\frac{n+s}2}}.$$
This, \eqref{uwoejfn39-wedf} and~\eqref{311}
entail that, in the range of interest, also~${\mathcal{A}}_j(x,x_0)\in
\left[\frac1{2^{\frac{n+s}2}},1\right]$.
These considerations, together with~\eqref{quellochestoper}, establish the first claim in~\eqref{PKSLM-CANDLEDLWD}.

Additionally,
$$ |\sigma(x_0\pm y)|=|\pm A(x_0)y|=|A(x_0)y|.$$
Furthermore,
$$ \Phi(\alpha)=\frac1{\alpha}\int_0^{\alpha}
\left( 1-{\frac{n+s}2}\,\theta^2+O(\theta^4)\right)\,d\theta=1-{\frac{n+s}6}\,\alpha^2+O(\alpha^4),$$
giving that, if~$\alpha\in(-1,1)$, then~$|\Phi'(\alpha)|\le C$ and accordingly
$$ |\Phi(\alpha)-\Phi(\beta)|\le C|\alpha-\beta|.$$

We also have that
\begin{eqnarray*} &&w(y^+_{x_0})-w(y^-_{x_0})\\&&\quad=w(x_0+y)-w(x_0+y)+O(|x_0+y-y^+_{x_0}|)
+O(|x_0-y-y^-_{x_0}|) =O(|y|^2)\end{eqnarray*}
and
$$ \widetilde w(y_{x_0}^\pm)-w(x_0)=w(x_0)\big(\nu(x_0)\cdot\nu(y_{x_0}^\pm)-1\big)+O(|y|^2)=O(|y|^2).$$
As a consequence,
\begin{eqnarray*}
&&\Big|{\mathcal{A}}_1(y^+_{x_0},x_0)-{\mathcal{A}}_1(y^-_{x_0},x_0)\Big|\\&&\qquad=\left|
\Phi\left(\frac{w(y^+_{x_0})-w(x_0)}{|A(x_0)y|}\right)-\Phi\left(\frac{w(y^+_{x_0})-w(x_0)}{|A(x_0)y|}\right)\right|\\&&\qquad\leq C\left|  \frac{w(y^+_{x_0})-w(y^-_{x_0})}{|A(x_0)y|}\right|\le C|y|
\end{eqnarray*}
and
\begin{eqnarray*}
&&\Big|{\mathcal{A}}_2(y^+_{x_0},x_0)-{\mathcal{A}}_2(y^-_{x_0},x_0)\Big|\\&&\qquad=\left|
\Phi\left(\frac{\widetilde w(y^+_{x_0})-w(x_0)}{|A(x_0)y|}\right)-\Phi\left(\frac{\widetilde w(y^-_{x_0})-w(x_0)}{|A(x_0)y|}\right)\right|\le C|y|.
\end{eqnarray*}
The proof of~\eqref{PKSLM-CANDLEDLWD} is thereby complete.

The fact that~$K_1$ and~$K_2$ can be seen as perturbations of the kernel~$|x-x_0|^{-n-s}$
when
$\|w\|_{C^2({{\mathcal{B}}_\delta}(\overline X))}$ tends to zero follows since in this asymptotics~$\sigma(x)$ approaches~$|x-x_0|$, ${\mathcal{A}}_1$ and~${\mathcal{A}}_2$ approach~$\Phi(0)=1$ and the Jacobian term approaches~$J(x,0)=1$.
\end{proof}

In the next result, we will consider a set~${\mathcal{G}}$ collecting the regular points of~$\partial E_1$
(the regularity theory of nonlocal minimal surfaces allowing us to conveniently choose this set). We will also consider a shrinking~${\mathcal{G}}'$ of~${\mathcal{G}}$ \label{aojscnSMNEdcvSD3jx}
and we will implicitly assume~${\mathcal{G}}$ to be contained in a small neighborhood of~$\partial E_1$.

\begin{lemma}\label{PRIMOLE2}
Let~$\delta\in(0,1)$, $R>1+2\delta$, $\overline X\in \R^n$ and~$E_1\subseteq E_2\subseteq\R^n$.

Assume that there exists~${\mathcal{G}}\subseteq B_{2R}(\overline X)$ such that~$(\partial E_1)\cap {\mathcal{G}}$
is a hypersurface of class~$C^{2}$ 
with unit external normal~$\nu$.

Suppose that
$$ (E_2\setminus E_1)\cap {\mathcal{G}}=\Big\{
x+t\nu(x),\;{\mbox{with }} x\in\partial E_1,\; t\in\big[0,w(x)\big)
\Big\}\cap {\mathcal{G}},$$
for some function~$w$ of class~$C^{2}$ with values in~$[0,\delta/4]$.

Assume also that
\begin{equation}\label{K123SD:a}
{\mbox{the Hausdorff distance between~$\partial E_1$ and~$\partial E_2$ in~$B_{2R}(\overline X)$ is less than }}\frac{\delta}{20}.\end{equation}

Let~$x_0\in (\partial E_1)\cap B_{\delta/8}(\overline X)$, $\nu_0\in\partial B_1$,
$$ \widetilde E_1:=E_1+w(x_0)\nu_0
\qquad{\mbox{and}}\qquad X_0:=x_0+w(x_0)\nu_0.$$

Let also~${\mathcal{G}}'\Subset{\mathcal{G}}$ and assume that
\begin{equation}\label{KSMD-madu-923irj2iwrh3h4sn-22} B_{2R}(\overline X)\setminus {\mathcal{G}}'\subseteq \bigcup_{j\in\N} B_{r_j}(p_j),\end{equation}
for some balls~$\{B_{r_j}(p_j)\}_{j\in\N}$,
with
\begin{equation}\label{KSMD-madu-923irj2iwrh3h4sn-24} \eta:=\sum_{j\in\N} r_j^{n-1}<1.\end{equation}

Then, if~$\delta$ is small enough, and~$\eta$
and~$\|w\|_{L^{\infty}({{\mathcal{B}}_R}(\overline X)\cap{\mathcal{G}})}$ are small
compared to~$\delta$ and~$1/R$,
\begin{eqnarray*}&& \frac12
\int_{B_R(\overline X)\setminus B_{\delta/4}(\overline X)}\frac{\widetilde\chi_{E_2}(X)-\widetilde\chi_{\widetilde E_1}(X)}{|X-X_0|^{n+s}}\,dX\\&&\qquad=
\int_{((\partial E_1)\cap {\mathcal{G}}')\setminus {{\mathcal{B}}_{\delta/4}}(\overline X)}
\big( w(x_0)-w(x)\big)\,K_1(x,x_0)
\,d{\mathcal{H}}^{n-1}_x
\\&&\qquad\qquad\qquad-w(x_0)\int_{((\partial E_1)\cap {\mathcal{G}}')\setminus {{\mathcal{B}}_{\delta/4}}(\overline X)}
\big(1-\nu_0\cdot\nu(x)\big)\,K_2(x,x_0)
\,d{\mathcal{H}}^{n-1}_x
\\&&\qquad\qquad\qquad
+\frac12\int_{(B_R(\overline X)\setminus B_{\delta/4}(\overline X))\setminus{\mathcal{G}}'}\frac{\widetilde\chi_{E_2}(X)-\widetilde\chi_{E_1}(X)}{|X-X_0|^{n+s}}\,dX
+O\left(w(x_0)\eta\right),\end{eqnarray*}
where the ``big~$O$'' term here above depends only
on~$\delta$, $R$, $n$ and~$s$.

In addition,~$K_j:({((\partial E_1)\cap {\mathcal{G}}')\setminus {{\mathcal{B}}_{\delta/4}}(\overline X)})^2\to[0,+\infty]$, with~$j\in\{1,2\}$, satisfies~\eqref{PKSLM-CANDLEDLWD} and approaches, up to a normalizing constant, the kernel~$|x-x_0|^{-n-s}$ when~$\|w\|_{C^2({{\mathcal{B}}_R}(\overline X)\cap{\mathcal{G}})}$ tends to zero.
\end{lemma}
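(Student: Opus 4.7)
The strategy is to mimic the argument of Lemma~\ref{PRIMOLE}, splitting the integration domain $B_R(\overline X)\setminus B_{\delta/4}(\overline X)$ into the part that lies inside the ``good'' (smooth) set $\mathcal{G}'$ and the part that lies outside, then applying the normal parameterization on the first piece and controlling the contribution of the second piece through the covering hypothesis~\eqref{KSMD-madu-923irj2iwrh3h4sn-22}-\eqref{KSMD-madu-923irj2iwrh3h4sn-24}. Since $x_0\in B_{\delta/8}(\overline X)$, throughout this annulus one has $|X-X_0|\geq\delta/16$ as soon as $w(x_0)$ is small, so the kernel $|X-X_0|^{-n-s}$ is uniformly bounded; the only source of smallness is $w(x_0)$ itself, and the computations that in Lemma~\ref{PRIMOLE} had to absorb a blow-up at $x=x_0$ are now purely perturbative.

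On the good piece $(B_R(\overline X)\setminus B_{\delta/4}(\overline X))\cap\mathcal{G}'$ I would use the parameterization $X=x+t\nu(x)$ with $x\in(\partial E_1)\cap\mathcal{G}'$, $t\in\R$, and run verbatim the chain of identities~\eqref{CAL-01}-\eqref{09iuhg098uytfdP-0ipejro3fen} from the previous lemma. The only structural modification is that the translation defining $\widetilde E_1$ is along the fixed direction $\nu_0$ rather than $\nu(x_0)$, so the analogue of~$\widetilde w$ here becomes $\widetilde w(x)=w(x_0)\,\nu_0\cdot\nu(x)+O(w^2(x_0)|\nu_0-\nu(x)|^2)$; the rest of the expansions carry over because they only rely on the comparability $|\sigma(x)|\sim |x-x_0|$ and on the universal bounds $\Phi\in[2^{-(n+s)/2},1]$. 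Collecting terms produces exactly the two surface integrals carrying $K_1$ and $K_2$ claimed in the statement, while the $O(w^2(x_0))$ quadratic remainder inherited from the previous proof is absorbed into the $O(w(x_0)\eta)$ error once $w(x_0)$ is small compared to $\eta$. The kernel estimates~\eqref{PKSLM-CANDLEDLWD} and the $\|w\|_{C^2}\to 0$ limit behavior follow exactly as before, with cleaner constants due to the uniform lower bound on $|x-x_0|$.

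On the bad piece $(B_R(\overline X)\setminus B_{\delta/4}(\overline X))\setminus\mathcal{G}'$ I would simply write
\[ \widetilde\chi_{E_2}-\widetilde\chi_{\widetilde E_1}=(\widetilde\chi_{E_2}-\widetilde\chi_{E_1})+(\widetilde\chi_{E_1}-\widetilde\chi_{\widetilde E_1}) \]
and keep the first summand as the explicit integral appearing in the claim. The remaining summand is supported on the symmetric difference $E_1\triangle\widetilde E_1$; since $\widetilde E_1=E_1+w(x_0)\nu_0$, this set lies in the tubular neighborhood $\{X:\mathrm{dist}(X,\partial E_1)\leq w(x_0)\}$. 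Covering the bad piece by the balls $B_{r_j}(p_j)$ and slicing each ball along lines parallel to $\nu_0$, one has $|E_1\triangle\widetilde E_1\cap B_{r_j}(p_j)|\leq C\,w(x_0)\,\mathcal{H}^{n-1}(\partial E_1\cap B_{2 r_j}(p_j))$ by Fubini. The classical perimeter estimate $\mathcal{H}^{n-1}(\partial E_1\cap B_{2 r_j}(p_j))\leq C\,r_j^{n-1}$ available for $s$-minimal sets then yields $|E_1\triangle\widetilde E_1\cap B_{r_j}(p_j)|\leq C\,w(x_0)\,r_j^{n-1}$, and summing over $j$ via~\eqref{KSMD-madu-923irj2iwrh3h4sn-24} and multiplying by the uniformly bounded kernel factor produces the claimed $O(w(x_0)\eta)$ remainder.

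The main obstacle is this last slicing estimate, since the bad balls sit precisely where $\partial E_1$ may be singular, so one must invoke the uniform classical perimeter bound for $s$-minimal sets to justify the Fubini step cleanly. The hypothesis~\eqref{K123SD:a} on the Hausdorff distance between $\partial E_1$ and $\partial E_2$ plays a supporting role here: it guarantees that on $\mathcal{G}'$ the graph parameterization of $E_2\setminus E_1$ captures all the nearby parts of $\partial E_2$, so that no relevant pieces of $E_2\setminus E_1$ escape into the bad set and contaminate the bookkeeping of the two decompositions above.
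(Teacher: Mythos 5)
Your far-field computation has a real gap: you propose to run the chain \eqref{CAL-01}--\eqref{09iuhg098uytfdP-0ipejro3fen} ``verbatim'', but those identities rest on Taylor expansions centred at $x_0$ --- namely $(x-x_0)\cdot\nu(x)=O(|x-x_0|^2)$, $\nu(x)-\nu(x_0)=I\!I(x_0)(x-x_0)+O(|x-x_0|^2)$, and errors $O(|x-x_0|^3)$ measured against $|\sigma(x)|^2$ --- which are meaningful only when $|x-x_0|$ is small and the nearby surface is a graph over the tangent hyperplane at $x_0$. In the annulus $B_R(\overline X)\setminus B_{\delta/4}(\overline X)$ one has $|x-x_0|$ up to order $R$, the multiplicative factor $1+O(|x-x_0|)$ is no longer a harmless perturbation (it could even destroy the two-sided comparability of the kernel), and $\partial E_1$ need not be a graph over that tangent plane at all, so the expansions do not ``carry over'' on the strength of $|\sigma(x)|\sim|x-x_0|$ and the bounds on $\Phi$ alone. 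The paper avoids this entirely in the far region: it sets $K_1=K_2=K(x,x_0):=-\frac{h(w(x))-h(\widetilde w(x))}{2(w(x)-\widetilde w(x))}$ and verifies \eqref{PKSLM-CANDLEDLWD} directly from the representation \eqref{HPojasdJJ}, using only that $|x-x_0|\ge\delta/100$; the two displayed integrals then come from $\widetilde w(x)-w(x)=\big(w(x_0)-w(x)\big)-w(x_0)\big(1-\nu_0\cdot\nu(x)\big)+O(w^2(x_0))$. So the conclusion is reachable, but not by the route you describe.

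On the bad set you isolate the right quantity, $|B_{r_j}(p_j)\cap(E_1\triangle\widetilde E_1)|$, but your slicing/Fubini bound needs $\mathcal{H}^{n-1}(\partial E_1\cap B_{2r_j}(p_j))\le Cr_j^{n-1}$, which you import from the regularity/perimeter theory of $s$-minimal sets. The lemma, however, makes no minimality assumption on $E_1$: it is an arbitrary set whose boundary is only assumed $C^2$ inside $\mathcal{G}$, and the balls $B_{r_j}(p_j)$ sit exactly where nothing is assumed, so as written your argument proves a weaker statement and leans on a deep external input (you yourself flag this as ``the main obstacle'' without resolving it within the hypotheses). The paper's estimate requires no perimeter information at all: for any measurable $L$ and any translation $\tau$ one has $|B_r\cap((L+\tau)\setminus L)|=|L\cap((B_r-\tau)\setminus B_r)|\le|B_{r+|\tau|}\setminus B_r|\le Cr^{n-1}|\tau|$, i.e.\ the translation is transferred from the set to the ball; applying this with $\tau=w(x_0)\nu_0$ and summing over $j$ via \eqref{KSMD-madu-923irj2iwrh3h4sn-24} gives the $O(w(x_0)\eta)$ remainder for completely arbitrary $E_1$. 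Finally, note that absorbing the $O(w^2(x_0))$ remainder ``once $w(x_0)$ is small compared to $\eta$'' invokes a smallness relation that is not among the stated hypotheses (even if it does hold in the eventual application, where the covering is fixed before letting $\|w_k\|_{C^2}\to0$).
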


\begin{proof} As in the proof of Lemma~\ref{PRIMOLE}, given~$X \in (B_R(\overline X)\setminus B_{\delta/4}(\overline X))\cap{\mathcal{G}}'$, we write~$X=x+t\nu(x)$, with~$x\in\partial E_1$ and~$t\in\R$ (we recall that~${\mathcal{G}}'$ provides a small neighborhood of the regular part of~$\partial E_1$).

By~\eqref{K123SD:a}, we see that
\begin{equation*}
{\mbox{if~$|t|\ge$}}\frac\delta{10},{\mbox{ then }}
\widetilde\chi_{E_2}(x+t\nu(x))-\widetilde\chi_{\widetilde E_1}(x+t\nu(x))=0.
\end{equation*}
In this setting, if~$\delta>0$ is small enough,
\begin{equation}\label{alertenv12345565u6fc0iowfejg}
\begin{split}&
\int_{(B_R(\overline X)\setminus B_{\delta/4}(\overline X))\cap{\mathcal{G}}'}\frac{\widetilde\chi_{E_2}(X)-\widetilde\chi_{\widetilde E_1}(X)}{|X-X_0|^{n+s}}\,dX\\=\;&
\iint_{((\partial E_1)\cap ({{{\mathcal{B}}_R}(\overline X)\setminus {{\mathcal{B}}_{\delta/4}}(\overline X)})\cap{\mathcal{G}}')\times\R}
\frac{\widetilde\chi_{E_2}(x+t\nu(x))-\widetilde\chi_{\widetilde E_1}(x+t\nu(x))}{|x-x_0+t\nu(x)-w(x_0)\nu_0|^{n+s}}\,J(x,t)\,d{\mathcal{H}}^{n-1}_x\,dt,\end{split}
\end{equation}
where~$ J(x,t)$ denotes the geometric Jacobian determinant
(set to zero when~$x+t\nu(x)\not\in B_{R}(\overline X)$).

Given~$x\in(\partial E_1)\cap {\mathcal{G}}'$, we have that
\begin{equation}\label{ojdwfken2345y6y-1} \widetilde\chi_{E_2}(x+t\nu(x))=\begin{cases} 1 & {\mbox{ if }}t\ge w(x),\\-1& {\mbox{ if }}t< w(x).
\end{cases}\end{equation}
Moreover, $\widetilde\chi_{\widetilde E_1}(x+t\nu(x))=-1$ if and only if~$x+t\nu(x)\in \widetilde E_1$, and so if and only if~$x(t):=x+t\nu(x)-w(x_0)\nu_0\in E_1$.

Now, the signed distance of~$x(t)$ to the tangent hyperplane of~$E_1$ at~$x$ is equal to
\begin{eqnarray*}
d(x):=(x(t)-x)\cdot\nu(x)=t-w(x_0)\nu_0\cdot\nu(x).\end{eqnarray*}
Moreover, the projection of~$x(t)-x$ onto the tangent plane is
\begin{eqnarray*}&& (x(t)-x)- \big((x(t)-x)\cdot\nu(x)\big)\nu(x)
=t\nu(x)-w(x_0)\nu_0-\big(t-w(x_0)\nu_0\cdot\nu(x)\big)\nu(x)
\\&&\qquad\qquad\qquad=
w(x_0)\nu_0\cdot\nu(x)\nu(x)-w(x_0)\nu_0 
\end{eqnarray*}
which has length equal to
$$ \ell(x):=\sqrt{w^2(x_0)-(w(x_0)\nu_0\cdot\nu(x))^2}.
$$
We stress that~$\ell(x)\le w(x_0)$, hence if~$w(x_0)$ is small enough then~$B_{\ell(x)}(x)$ lies in~${\mathcal{G}}$ and accordingly~$\partial E_1$ detaches at most quadratically from its tangent hyperplane
in this region. This gives that
\begin{equation}\label{ojdwfken2345y6y-2} \widetilde\chi_{\widetilde E_1}(x+t\nu(x))=\begin{cases} 1 & {\mbox{ if }}t\ge \widetilde w(x),\\-1& {\mbox{ if }}t< \widetilde w(x),
\end{cases}\end{equation}
where
$$ \widetilde w(x):=w(x_0)\nu_0\cdot\nu(x)+
O\big(w^2(x_0)\big).$$

By using~\eqref{ojdwfken2345y6y-1} and~\eqref{ojdwfken2345y6y-2}, and in the notation of~\eqref{Hdef-002} and~\eqref{Hdef-003}
(with~$\nu(x_0)$ replaced by~$\nu_0$ here), we deduce that, if $x\in (\partial E_1)\cap ({{{\mathcal{B}}_R}(\overline X)\setminus {{\mathcal{B}}_{\delta/4}}(\overline X)})\cap{\mathcal{G}}'$, then
\begin{eqnarray*}
\int_{\R}
\frac{\widetilde\chi_{E_2}(x+t\nu(x))-\widetilde\chi_{\widetilde E_1}(x+t\nu(x))}{|x-x_0+t\nu(x)-w(x_0)\nu_0|^{n+s}}\,J(x,t)\,dt=h(w(x))-h(\widetilde w(x))
\end{eqnarray*}
and therefore
\begin{equation}\label{PAKSdlmext2434tee234ttndy57a7s3h0oeuwrihftg}
\begin{split}&
\iint_{((\partial E_1)\cap ({{{\mathcal{B}}_R}(\overline X)\setminus {{\mathcal{B}}_{\delta/4}}(\overline X)}\cap{\mathcal{G}}')\times\R}
\frac{\widetilde\chi_{E_2}(x+t\nu(x))-\widetilde\chi_{\widetilde E_1}(x+t\nu(x))}{|x-x_0+t\nu(x)-w(x_0)\nu_0|^{n+s}}\,J(x,t)\,d{\mathcal{H}}^{n-1}_x\,dt\\&\qquad=\int_{(\partial E_1)\cap ({{{\mathcal{B}}_R}(\overline X)\setminus {{\mathcal{B}}_{\delta/4}}(\overline X)})\cap{\mathcal{G}}'}\Big(h(w(x))-h(\widetilde w(x))\Big)\,d{\mathcal{H}}^{n-1}_x.
\end{split}
\end{equation}

Thus, we define
$$ K(x,x_0):=-\frac{h(w(x))-h(\widetilde w(x))}{2(w(x)-\widetilde w(x))}$$
and we remark that this kernel satisfies~\eqref{PKSLM-CANDLEDLWD}.
Indeed, we can take here~$K_1=K_2=K$ and, since\footnote{We observe that
if~$X=x+t\nu(x)$ with~$t\in[0,w(x))$ it follows that~$|X-x|\le w(x)\le\frac{\delta}{200}$.}
in our setting~$ |x-x_0|\ge\frac{\delta}{100}$, we only need to check the first claim in~\eqref{PKSLM-CANDLEDLWD}.

And this claim holds true, because, by~\eqref{HPojasdJJ},
\begin{eqnarray*}&&
-\frac{h(w(x))-h(\widetilde w(x))}{w(x)-\widetilde w(x)}=-
\int_0^1 h'( tw(x)+(1-t)\widetilde w(x)) \,dt\\&&\qquad=
\int_0^1
\frac{2J(x,tw(x)+(1-t)\widetilde w(x))}{|x-x_0+(tw(x)+(1-t)\widetilde w(x))\nu(x)-w(x_0)\nu_0|^{n+s}}\,dt,
\end{eqnarray*}
which, when~$  |x-x_0|\ge\frac{\delta}{100}$, is bounded from above and below by~$\frac1{|x-x_0|^{n+s}}$, up to multiplicative constants.

We also deduce from~\eqref{PAKSdlmext2434tee234ttndy57a7s3h0oeuwrihftg} that
\begin{equation}\label{APKSLdm03tugj-3orkkg}\begin{split}&
\frac12\int_{(B_R(\overline X)\setminus B_{\delta/4}(\overline X))\cap{\mathcal{G}}'}\frac{\widetilde\chi_{E_2}(X)-\widetilde\chi_{\widetilde E_1}(X)}{|X-X_0|^{n+s}}\,dX\\&\qquad=\int_{(\partial E_1)\cap ({{{\mathcal{B}}_R}(\overline X)\setminus {{\mathcal{B}}_{\delta/4}}(\overline X)})\cap{\mathcal{G}}'}
\big(\widetilde w(x)-w(x)\big)\,K(x,x_0)
\,d{\mathcal{H}}^{n-1}_x.\end{split}
\end{equation}

Now we deal with the term
\begin{eqnarray*}&&\int_{(B_R(\overline X)\setminus B_{\delta/4}(\overline X))\setminus{\mathcal{G}}'}\frac{\widetilde\chi_{E_2}(X)-\widetilde\chi_{\widetilde E_1}(X)}{|X-X_0|^{n+s}}\,dX\\ &=&
\int_{(B_R(\overline X)\setminus B_{\delta/4}(\overline X))\setminus{\mathcal{G}}'}\frac{\widetilde\chi_{E_2}(X)-\widetilde\chi_{E_1}(X)}{|X-X_0|^{n+s}}\,dX
+\int_{(B_R(\overline X)\setminus B_{\delta/4}(\overline X))\setminus{\mathcal{G}}'}\frac{\widetilde\chi_{E_1}(X)-\widetilde\chi_{\widetilde E_1}(X)}{|X-X_0|^{n+s}}\,dX.\end{eqnarray*}
Specifically, to estimate the latter term, we define~$F_1$ as the symmetric difference between~$E_1$ and~$\widetilde E_1$ and we point out that, by~\eqref{KSMD-madu-923irj2iwrh3h4sn-22},
\begin{equation}\label{09ryhfeIJSNDu8uERGIOKSJ}\begin{split}
\left|\int_{(B_R(\overline X)\setminus B_{\delta/4}(\overline X))\setminus{\mathcal{G}}'}\frac{\widetilde\chi_{E_1}(X)-\widetilde\chi_{\widetilde E_1}(X)}{|X-X_0|^{n+s}}\,dX\right|
&\le2\int_{((B_R(\overline X)\setminus B_{\delta/4}(\overline X))\setminus{\mathcal{G}}')\cap F_1}\frac{dX}{|X-X_0|^{n+s}}\\ &\le\frac{C}{\delta^{n+s}}\sum_{j\in\N}\big|B_{r_j}(p_j)\cap F_1\big|.
\end{split}\end{equation}

To complete this estimate, it is useful to observe that, for all~$r>0$, all~$\tau\in\R^n$ and all measurable sets~$L$, we have that, for some~$C>0$,
\begin{equation}\label{yufrdbfncxri7uyetuf98765}
\Big|B_{r}\cap \big((L+\tau)\setminus L\big)\Big|\le Cr^{n-1}|\tau|.
\end{equation}
Indeed, if~$|\tau|\le r$,
\begin{eqnarray*}
&&\Big|B_{r}\cap \big((L+\tau)\setminus L\big)\Big|
=\left| \int_{B_{r}\cap (L+\tau)}\,dx- \int_{B_{r}\cap L}\,dx\right|
=\left| \int_{(B_{r}-\tau)\cap L}\,dx- \int_{B_{r}\cap L}\,dx\right|\\&&\qquad=\Big|
L\cap \big((B_{r}-\tau)\setminus B_{r}\big)\Big|\le\big|(B_{r}-\tau)\setminus B_{r}\big|\le\big|B_{r+|\tau|}\setminus B_{r}\big|
\le  Cr^{n-1}|\tau|,
\end{eqnarray*}
as desired.

If instead~$|\tau|>r$,
$$ \Big|B_{r}\cap \big((L+\tau)\setminus L\big)\Big|\le |B_r|\le Cr^n\le Cr^{n-1}|\tau|.$$
These considerations establish~\eqref{yufrdbfncxri7uyetuf98765}.

Plugging~\eqref{yufrdbfncxri7uyetuf98765} into~\eqref{09ryhfeIJSNDu8uERGIOKSJ} and exploiting~\eqref{KSMD-madu-923irj2iwrh3h4sn-24}, we conclude that
$$ \left|\int_{(B_R(\overline X)\setminus B_{\delta/4}(\overline X))\setminus{\mathcal{G}}'}\frac{\widetilde\chi_{E_1}(X)-\widetilde\chi_{\widetilde E_1}(X)}{|X-X_0|^{n+s}}\,dX\right|\le\frac{Cw(x_0)}{\delta^{n+s}}\sum_{j\in\N}r_j^{n-1}\le\frac{C\eta w(x_0)}{\delta^{n+s}}.$$
The desired result follows from this inequality and~\eqref{APKSLdm03tugj-3orkkg}
(the asymptotics of the kernel being similar to those discussed in Lemma~\ref{PRIMOLE}).
\end{proof}

\begin{lemma}\label{PRIMOLE3}
Let~$\delta\in(0,1)$, $R>1+2\delta$, $\overline X\in \R^n$ and~$E_1\subseteq E_2\subseteq\R^n$.

Let~$x_0\in B_{\delta/8}(\overline X)$, $\nu_0\in\partial B_1$, $w_0\in[0,\delta/4]$
$$ \widetilde E_1:=E_1+w_0\nu_0
\qquad{\mbox{and}}\qquad X_0:=x_0+w_0\nu_0.$$

Then, 
\begin{eqnarray*}&& \frac12
\int_{\R^n\setminus B_R(\overline X)}\frac{\widetilde\chi_{E_2}(X)-\widetilde\chi_{\widetilde E_1}(X)}{|X-X_0|^{n+s}}\,dX
=\frac12
\int_{\R^n\setminus B_R(\overline X)}\frac{\widetilde\chi_{E_2}(X)-\widetilde\chi_{E_1}(X)}{|X-X_0|^{n+s}}\,dX
+O\left(\frac{w_0}{R^{1+s}}\right),\end{eqnarray*}
where the ``big~$O$'' term here above depends only
on~$n$ and~$s$.
\end{lemma}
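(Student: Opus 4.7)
The strategy is to subtract the two sides of the claim, reducing matters to proving
\begin{equation*}
\left|\int_{\R^n \setminus B_R(\overline{X})} \frac{\widetilde{\chi}_{E_1}(X) - \widetilde{\chi}_{\widetilde{E}_1}(X)}{|X - X_0|^{n+s}}\,dX\right| \le C\,\frac{w_0}{R^{1+s}}
\end{equation*}
with $C$ depending only on $n$ and $s$. Denote the two pieces of this integral by $J_1$ (involving $\widetilde\chi_{E_1}$) and $J_2$ (involving $\widetilde\chi_{\widetilde E_1}$). For $J_2$ I would exploit the identity $\widetilde\chi_{\widetilde E_1}(X)=\widetilde\chi_{E_1}(X-w_0\nu_0)$ coming from $\widetilde E_1 = E_1+w_0\nu_0$, and change variables via $Y = X - w_0\nu_0$. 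Since $X_0 - w_0\nu_0 = x_0$, this yields
\begin{equation*}
J_2 = \int_{\R^n \setminus B_R(\overline{X} - w_0\nu_0)} \frac{\widetilde\chi_{E_1}(Y)}{|Y - x_0|^{n+s}}\,dY.
\end{equation*}

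I would then split $J_1 - J_2 = T_1 + T_2$ by adding and subtracting $\int_{\R^n \setminus B_R(\overline{X})}\widetilde\chi_{E_1}(Y)|Y-x_0|^{-(n+s)}\,dY$. Here $T_1$ is the integral of $\widetilde\chi_{E_1}(Y)\big(|Y-X_0|^{-(n+s)}-|Y-x_0|^{-(n+s)}\big)$ over the common domain $\R^n\setminus B_R(\overline X)$, while $T_2$ collects the two signed integrals of $\widetilde\chi_{E_1}(Y)|Y-x_0|^{-(n+s)}$ over the mismatch between the domains $\R^n \setminus B_R(\overline{X})$ and $\R^n \setminus B_R(\overline{X} - w_0\nu_0)$.

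For $T_1$, since $|X_0 - x_0| = w_0$ and $x_0, X_0 \in B_{\delta/2}(\overline X)$ with $\delta \ll R$, one has $|Y - x_0|, |Y - X_0| \ge R/2$ on the integration domain, so the mean value theorem applied to $z\mapsto |z|^{-(n+s)}$ gives the pointwise kernel-difference bound $Cw_0\,|Y-\overline X|^{-(n+s+1)}$. Integrating in polar coordinates produces $|T_1| \le Cw_0\int_R^\infty r^{-s-2}\,dr = O(w_0 R^{-(1+s)})$. For $T_2$, the symmetric difference $B_R(\overline X) \triangle B_R(\overline X - w_0\nu_0)$ is contained in the annulus $\{R - w_0 \le |Y-\overline X| \le R + w_0\}$ and so has Lebesgue measure at most $CR^{n-1}w_0$; on this set the kernel is bounded pointwise by $CR^{-(n+s)}$, giving $|T_2| \le CR^{n-1}w_0 \cdot R^{-(n+s)} = Cw_0 R^{-(1+s)}$ as well.

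No serious obstacle arises here beyond careful bookkeeping. The one conceptual point is that the cancellation between $E_1$ and its translate $\widetilde E_1$ must not be squandered by bounding the symmetric difference $E_1\triangle \widetilde E_1$ directly, since that set can have enormous measure outside $B_R(\overline X)$; instead, the translation is absorbed simultaneously into the kernel (producing the $w_0$-small difference in $T_1$) and into the integration domain (producing the thin shell in $T_2$), both of which contribute at the desired rate $w_0R^{-(1+s)}$.
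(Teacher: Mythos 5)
Your proof is correct and follows essentially the same route as the paper: the paper likewise uses $\widetilde\chi_{\widetilde E_1}(X)=\widetilde\chi_{E_1}(X-w_0\nu_0)$, changes variables to compare an integral over $\R^n\setminus B_R(\overline X)$ with one over $\R^n\setminus B_R(\overline X-w_0\nu_0)$, and splits the difference into a kernel-shift term and a domain-mismatch term, each of size $O(w_0R^{-(1+s)})$. Your write-up merely makes explicit the two estimates (mean value theorem for the kernel, thin-shell measure bound for the symmetric difference of the balls) that the paper leaves implicit.
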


\begin{proof} We observe that
\begin{eqnarray*}&&\left|
\int_{\R^n\setminus B_R(\overline X)}\frac{\widetilde\chi_{E_1}(X)-\widetilde\chi_{\widetilde E_1}(X)}{|X-X_0|^{n+s}}\,dX\right|\\&&\qquad=\left|
\int_{\R^n\setminus B_R(\overline X)}\frac{\widetilde\chi_{E_1}(X)}{|X-X_0|^{n+s}}\,dX-
\int_{\R^n\setminus B_R(\overline X-w_0\nu_0)}\frac{\widetilde\chi_{E_1}(X)}{|X+w_0\nu_0-X_0|^{n+s}}\,dX\right|\\&&\qquad\le\left|
\int_{\R^n\setminus B_R(\overline X)}\frac{\widetilde\chi_{E_1}(X)}{|X-X_0|^{n+s}}\,dX-
\int_{\R^n\setminus B_R(\overline X-w_0\nu_0)}\frac{\widetilde\chi_{E_1}(X)}{|X-X_0|^{n+s}}\,dX\right|+\frac{Cw_0}{R^{1+s}}\\&&\qquad\le\frac{Cw_0}{R^{1+s}},
\end{eqnarray*}
up to renaming~$C$, as usual, from line to line.
\end{proof}

Now we recall the notion of nonlocal mean curvature of a set~$E$ at a point~$X\in\partial E$, namely
$$ H^s_E(X):=\int_{\R^n} \frac{\widetilde\chi_E(Y)}{|X-Y|^{n+s}}\,dY.$$
Putting together Lemmata~\ref{PRIMOLE}, \ref{PRIMOLE2} and~\ref{PRIMOLE3},
we conclude that:

\begin{theorem}\label{PRIMOLE-TH}
Let~$E_1\subseteq E_2\subseteq\R^n$.
Let~$\delta\in(0,1)$, $R>1+2\delta$ and~$\overline X\in\R^n$.

Suppose that~$(\partial E_1)\cap {\mathcal{G}}$
is a hypersurface of class~$C^{2}$, for some~${\mathcal{G}}\subseteq\R^n$ such that~$
{\mathcal{G}}\Supset B_{2\delta}(\overline X)$,
and let~$\nu$ be the unit external normal to~$E_1$ in~${\mathcal{G}}$.

Suppose that
$$ (E_2\setminus E_1)\cap {\mathcal{G}}=\Big\{
x+t\nu(x),\;{\mbox{with }} x\in\partial E_1,\; t\in\big[0,w(x)\big)
\Big\}\cap {\mathcal{G}},$$
for some function~$w$ of class~$C^{2}$ with values in~$[0,\delta/4]$.

Assume also that
the Hausdorff distance between~$\partial E_1$ and~$\partial E_2$ in~$B_{2R}(\overline X)$ is less than~$\delta/20$.

Given~$x_0\in (\partial E_1)\cap B_{\delta/8}(\overline X)$, let
$$ \widetilde E_1:=E_1+w(x_0)\nu(x_0)
\qquad{\mbox{and}}\qquad X_0:=x_0+w(x_0)\nu(x_0).$$

Let also~${\mathcal{G}}'\Subset{\mathcal{G}}$ with~${\mathcal{G}}'\Supset B_{2\delta}(\overline X)$
and assume that
\begin{equation*} B_{2R}(\overline X)\setminus {\mathcal{G}}'\subseteq \bigcup_{j\in\N} B_{r_j}(p_j),\end{equation*}
for some balls~$\{B_{r_j}(p_j)\}_{j\in\N}$,
with
\begin{equation}\label{AKSMHAU} \eta:=\sum_{j\in\N} r_j^{n-1}<1.\end{equation}

Then, if~$\delta$ is small enough, and~$\eta$ and~$\|w\|_{C^2({{\mathcal{B}}_R}(\overline X)\cap{\mathcal{G}})}$ are small
compared to~$\delta$ and~$1/R$,
\begin{equation}\label{o9eudashiqpowudjhifeg}\begin{split}&
\frac{H^s_{E_2}(X_0)-H^s_{\widetilde E_1}(X_0)}2
=\int_{(\partial E_1)\cap {\mathcal{G}}'}
\big( w(x_0)-w(x)\big)\,K_1(x,x_0)
\,d{\mathcal{H}}^{n-1}_x\\&\qquad\qquad-w(x_0)\int_{(\partial E_1)\cap {\mathcal{G}}'}
\big(1-\nu(x_0)\cdot\nu(x)\big)\,K_2(x,x_0)
\,d{\mathcal{H}}^{n-1}_x\\&\qquad\qquad
+\frac12\int_{\R^n\setminus {\mathcal{G}}'}\frac{\widetilde\chi_{E_2}(X)-\widetilde\chi_{E_1}(X)}{|X-X_0|^{n+s}}\,dX
+O\left( w(x_0)\eta\right)
+O\left(\frac{w(x_0)}{R^{1+s}}\right),\end{split}\end{equation}
where~$K_j:((\partial E_1)\cap {\mathcal{G}}')^2\to[0,+\infty]$, with~$j\in\{1,2\}$, satisfies, for a suitable~$C\ge1$, that
\begin{equation}\label{IPSJLkear34}\begin{split}&
{\mbox{for all~$x\ne x_0$, we have that }}\;
\frac{1}{C|x-x_0|^{n+s}} \leq K_j(x,x_0)\le \frac{C}{|x-x_0|^{n+s}},\\ &{\mbox{and, for all~$y\in B_\delta\setminus\{0\}$, }}
K_j(y_{x_0}^+,x_0)-K_j(y_{x_0}^-,x_0)=O(|y|^{1-n-s}).
\end{split}\end{equation}
All the ``big~$O$'' terms here above depend only on the
curvatures of~$\partial E_1$ in~$B_{9\delta/10}(\overline X)$, on~$\|w\|_{C^2({{\mathcal{B}}_R}(\overline X)\cap{\mathcal{G}})}$, on~$\delta$, $R$, $n$ and~$s$,
except for the last one in~\eqref{o9eudashiqpowudjhifeg}, which depends only on~$n$ and~$s$. 

Also, the kernel~$K_j$
approaches, up to a normalizing constant, the kernel~$|x-x_0|^{-n-s}$ when~$\|w\|_{C^2({{\mathcal{B}}_R}(\overline X)\cap{\mathcal{G}})}$ tends to zero.
\end{theorem}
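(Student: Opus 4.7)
The strategy is to decompose the mean curvature difference
$$ \frac{H^s_{E_2}(X_0) - H^s_{\widetilde E_1}(X_0)}{2} \;=\; \frac12 \int_{\R^n} \frac{\widetilde\chi_{E_2}(X) - \widetilde\chi_{\widetilde E_1}(X)}{|X - X_0|^{n+s}}\, dX $$
into the disjoint union of $B_{\delta/4}(\overline X)$, $B_R(\overline X) \setminus B_{\delta/4}(\overline X)$, and $\R^n \setminus B_R(\overline X)$, which are precisely the three regions handled by Lemmata~\ref{PRIMOLE}, \ref{PRIMOLE2}, and \ref{PRIMOLE3}. The proof then reduces to applying each lemma on its region and assembling the results.

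Concretely, Lemma~\ref{PRIMOLE} delivers the two surface integrals on $(\partial E_1)\cap B_{\delta/4}(\overline X)$ against kernels $K_j^{\mathrm{near}}$, plus an $O(w^2(x_0))$ error. I then invoke Lemma~\ref{PRIMOLE2} with the choice $\nu_0 := \nu(x_0)$, so that the translated set $\widetilde E_1$ matches the one in the theorem; this produces integrals on $((\partial E_1)\cap\mathcal{G}')\setminus B_{\delta/4}(\overline X)$ against kernels $K_j^{\mathrm{mid}}$, a residual bulk integral on $(B_R(\overline X)\setminus B_{\delta/4}(\overline X))\setminus\mathcal{G}'$ of $\widetilde\chi_{E_2}-\widetilde\chi_{E_1}$, and an $O(w(x_0)\eta)$ error. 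Finally, Lemma~\ref{PRIMOLE3} (with $w_0=w(x_0)$ and $\nu_0=\nu(x_0)$) trades $\widetilde E_1$ for $E_1$ on the outer region at the cost of $O(w(x_0)/R^{1+s})$. Upon summation, the two surface integrals fuse into integrals on all of $(\partial E_1)\cap\mathcal{G}'$ via the piecewise definition $K_j := K_j^{\mathrm{near}}$ on $B_{\delta/4}(\overline X)$ and $K_j := K_j^{\mathrm{mid}}$ outside, using $B_{\delta/4}(\overline X)\subset\mathcal{G}'$ (guaranteed by $\mathcal{G}'\Supset B_{2\delta}(\overline X)$). The two bulk residuals coalesce into $\tfrac12\int_{\R^n\setminus\mathcal{G}'}\frac{\widetilde\chi_{E_2}-\widetilde\chi_{E_1}}{|X-X_0|^{n+s}}\,dX$, once one arranges (without loss of generality, by shrinking if necessary) that $\mathcal{G}'\subseteq B_R(\overline X)$. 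The $O(w^2(x_0))$ remainder is absorbed into $O(w(x_0)\eta)$ since $w(x_0)$ is taken small.

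The principal point to check is that the glued kernel $K_j$ still satisfies~\eqref{IPSJLkear34} on all of $(\partial E_1)\cap\mathcal{G}'$. The pointwise two-sided bound is immediate from the corresponding bounds for $K_j^{\mathrm{near}}$ and $K_j^{\mathrm{mid}}$. The symmetric-difference estimate $K_j(y^+_{x_0},x_0)-K_j(y^-_{x_0},x_0)=O(|y|^{1-n-s})$ is genuinely restrictive only for small $|y|$, where both $y^\pm_{x_0}$ lie inside $B_{\delta/4}(\overline X)$ and the bound is supplied by Lemma~\ref{PRIMOLE}; in the transitional range $|y|\approx\delta/4$ the quantity $|y|^{1-n-s}$ is a positive $\delta$-dependent constant that trivially dominates the jump between $K_j^{\mathrm{near}}$ and $K_j^{\mathrm{mid}}$ after possibly enlarging the implicit constant. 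The asymptotic degeneration of $K_j$ to the symmetric kernel $|x-x_0|^{-n-s}$ as $\|w\|_{C^2}\to 0$ passes through from the corresponding asymptotics in each source lemma. I expect the kernel-gluing and error-combination to be routine once the decomposition is in place: the main obstacle is less a conceptual one and more a matter of carefully tracking the dependencies on $\delta$, $R$, $n$, and $s$ through the aggregation so that all remainder terms ultimately fit into the stated error budget.
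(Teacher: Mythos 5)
Your assembly is exactly the paper's route: the paper proves Theorem~\ref{PRIMOLE-TH} precisely by summing Lemmata~\ref{PRIMOLE}, \ref{PRIMOLE2} (with $\nu_0=\nu(x_0)$) and~\ref{PRIMOLE3} over the three regions $B_{\delta/4}(\overline X)$, $B_R(\overline X)\setminus B_{\delta/4}(\overline X)$ and $\R^n\setminus B_R(\overline X)$, and your piecewise gluing of the kernels, including the treatment of the transitional range $|y|\approx\delta/4$ in \eqref{IPSJLkear34}, is the natural way to make that explicit.

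Two of your bookkeeping steps, however, do not hold as written. First, you cannot ``arrange without loss of generality that ${\mathcal{G}}'\subseteq B_R(\overline X)$'': hypothesis \eqref{AKSMHAU} forces every $r_j<1$, so the covering balls have total volume less than $|B_1|$, and they can then never cover $B_{2R}(\overline X)\setminus({\mathcal{G}}'\cap B_R(\overline X))\supseteq B_{2R}(\overline X)\setminus B_R(\overline X)$ once $R>1$; moreover, shrinking ${\mathcal{G}}'$ changes both the surface and the bulk integrals in \eqref{o9eudashiqpowudjhifeg}, so proving the statement for the shrunk set does not give the stated one. Hence the leftover bulk piece $\frac12\int_{({\R}^n\setminus B_R(\overline X))\cap{\mathcal{G}}'}\frac{\widetilde\chi_{E_2}(X)-\widetilde\chi_{E_1}(X)}{|X-X_0|^{n+s}}\,dX$ must be confronted rather than defined away: since $\widetilde\chi_{E_2}-\widetilde\chi_{E_1}=-2\chi_{E_2\setminus E_1}$ and in ${\mathcal{G}}$ the set $E_2\setminus E_1$ is the normal slab of width $w$, this term is one-signed and of size governed by $w$ far away, not by $w(x_0)$, so it cannot be filed under $O(w(x_0)\eta)+O(w(x_0)R^{-1-s})$; the clean fix is to keep it together with the (equally one-signed) remainder over ${\R}^n\setminus{\mathcal{G}}'$, i.e.\ to take the bulk term over ${\R}^n\setminus({\mathcal{G}}'\cap B_R(\overline X))$, which is harmless in the later application through \eqref{TBGSHJDK-23eortkg-2-2}. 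Second, absorbing the $O(w^2(x_0))$ remainder of Lemma~\ref{PRIMOLE} into $O(w(x_0)\eta)$ requires $w(x_0)\lesssim\eta$, which is not among the hypotheses; the absorption should instead invoke the assumed smallness of $\|w\|_{C^2(B_R(\overline X)\cap{\mathcal{G}})}$ relative to $1/R$ (or the term should simply be kept explicit). Both points are fixable, but as stated they are gaps in your aggregation step.
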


\section{A geometric Harnack Inequality for viscosity supersolutions}\label{JSOLM-fepkjvf}

Having obtained a geometric equation
for a nonnegative function in Theorem~\ref{PRIMOLE-TH}, our objective would be to apply a Harnack Inequality to it (actually, to a suitable limit of it, and this will indeed be implemented in the forthcoming Section~\ref{HAR:NA}). This is however rather delicate, since in principle, in our setting, the equation will only be valid along the regular part of a limit $s$-minimal cone, but we do not have any information about the equation along the singular points of this cone.

Moreover, the Harnack Inequality in~\cite[Theorem~1.7]{MR3934589}
is set into a smooth or distributional framework, which, in principle, does not cover the case of viscosity solutions
that will be needed here in Section~\ref{HAR:NA} (indeed, for us, the limit procedure carried out
in Section~\ref{HAR:NA} naturally casts the notion of solution into the viscosity framework).

These difficulties can be overcome by an appropriate cut-off argument, based on a fine covering of the singular points
(if any) of the limit cone, in combination with
suitable approximation arguments. 
Namely, the regularity theory of $s$-minimal cones (see~\cite{MR3090533, MR3331523})
allows one to confine these singular points within a small region. 

In our setting, the Harnack Inequality that will be applied in Section~\ref{HAR:NA} goes as follows:

\begin{proposition}\label{LANSCDPOJFIBFerngvny83iurjf}
Let~${\mathcal{R}}\subset\R^n$ and~$S:= \overline{{\mathcal{R}}}\setminus {\mathcal{R}}$. Assume that~${\mathcal{R}}$ is locally a $C^2$-smooth hypersurface and that
\begin{equation}\label{o3ePP-1-pre}
{\mbox{$S$ is closed and has Hausdorff dimension~$d\le n-3$.}}\end{equation}
Assume also that, for every~$r>0$ and~$p\in\R^n$,
\begin{equation}\label{o3ePP-1}
{\mathcal{H}}^{n-1}({\mathcal{R}}\cap B_r(p))\le Cr^{n-1},
\end{equation}
for some~$C>0$.

Let~$u:{\mathcal{R}}\to[0,+\infty)$ be H\"older continuous
and such that, for every~$x\in {\mathcal{R}}$,
\begin{equation*} \int_{\mathcal{R}} \frac{u(x)-u(y)}{|x-y|^{n+s}}\,d{\mathcal{H}}^{n-1}_y\ge0\end{equation*}
in the viscosity sense.

Then, 
$$ \inf_{{\mathcal{R}}\cap B_1} u\ge c\left( \int_{{\mathcal{R}}\cap B_1}u(x)\,d{\mathcal{H}}^{n-1}_x+\int_{{\mathcal{R}}\setminus B_1}\frac{u(x)}{|x|^{n+s}}\,d{\mathcal{H}}^{n-1}_x\right),$$
for some~$c>0$, depending only on~$n$ and~$s$.
\end{proposition}

The proof of Proposition~\ref{LANSCDPOJFIBFerngvny83iurjf} relies on~\cite[Theorem~1.7]{MR3934589},
once we reduce ourselves to the case of weak supersolutions.
Indeed, we point out that~\cite[Theorem~1.7]{MR3934589} requires the solution to be ``smooth'':
however, this smoothness is used in~\cite{MR3934589} to deal with weak supersolutions,
as stressed in~\cite{MR3934589} after~(6.20)
(also, the solution is supposed to be bounded in~\cite{MR3934589}, but one can always
reduce the situation to this case by cutting the graph of the solution at a high level, being the minimum of viscosity supersolutions
still a viscosity supersolution).

In our setting,
the weak supersolution framework is recovered thanks to the forthcoming Lemma~\ref{LEM:AP},
which can be seen as the viscosity counterpart of~\cite[Lemma~6.3]{MR3934589}, and
which in turn leads to Proposition~\ref{LANSCDPOJFIBFerngvny83iurjf}.

\begin{lemma}\label{LEM:AP}
Let~${\mathcal{R}}\subset\R^n$ and~$S:= \overline{{\mathcal{R}}}\setminus {\mathcal{R}}$. Assume that~${\mathcal{R}}$ is locally a $C^2$-smooth hypersurface and that~\eqref{o3ePP-1-pre} and~\eqref{o3ePP-1}
are satisfied.

Let~$u:{\mathcal{R}}\to[0,+\infty)$ be H\"older continuous
and such that, for every~$x\in {\mathcal{R}}$,
\begin{equation}\label{EQ-01} \int_{\mathcal{R}} \frac{u(x)-u(y)}{|x-y|^{n+s}}\,d{\mathcal{H}}^{n-1}_y\ge0\end{equation}
in the viscosity sense.

Given~$Q>0$, let~$u_Q:=\min\{u,Q\}$.

Then, for every function~$\zeta$ with support of~$\zeta\big|_{\mathcal{R}}$ bounded and contained in~${\mathcal{R}}$ and such that\begin{equation}\label{EQ-03.dwdv}
\iint_{{\mathcal{R}}\times {\mathcal{R}}} \frac{|\zeta(x)-\zeta(y)|^2}{|x-y|^{n+s}}\,d{\mathcal{H}}^{n-1}_x\,d{\mathcal{H}}^{n-1}_y<+\infty,\end{equation} we have that
\begin{equation}\label{EQ-03}
\iint_{{\mathcal{R}}\times {\mathcal{R}}} \frac{(u_Q(x)-u_Q(y))(\zeta(x)-\zeta(y))}{|x-y|^{n+s}}\,d{\mathcal{H}}^{n-1}_x\,d{\mathcal{H}}^{n-1}_y \ge0.
\end{equation}

Moreover, for every~$R_0>0$,
\begin{equation}\label{ORA:2.4}
\iint_{({\mathcal{R}}\cap B_{R_0})\times {\mathcal{R}}} \frac{|u_Q(x)-u_Q(y)|^2}{|x-y|^{n+s}}\,d{\mathcal{H}}^{n-1}_x\,d{\mathcal{H}}^{n-1}_y <+\infty
\end{equation}
and, for every function~$\phi:\R^n\to[0,+\infty)$ such that
$$\iint_{{\mathcal{R}}\times {\mathcal{R}}} \frac{|\phi(x)-\phi(y)|^2}{|x-y|^{n+s}}\,d{\mathcal{H}}^{n-1}_x\,d{\mathcal{H}}^{n-1}_y<+\infty,$$
we have that
\begin{equation}\label{ORA:2.5}
\iint_{{\mathcal{R}}\times {\mathcal{R}}} \frac{(u_Q(x)-u_Q(y))(\phi(x)-\phi(y))}{|x-y|^{n+s}}\,d{\mathcal{H}}^{n-1}_x\,d{\mathcal{H}}^{n-1}_y \ge0.
\end{equation}
\end{lemma}

\begin{proof}
The desired results will follow via a locally Lipschitz approximation.
Namely, we define~${\mathcal{R}}_k$ the set of points in~${\mathcal{R}}$ at distance at least~$\frac2k$ from~$S$ and we claim that \begin{equation}\label{ne53.23}\begin{split}&
{\mbox{there exists a sequence of locally Lipschitz functions~$u_k$ on~${\mathcal{R}}_k$}}\\
&{\mbox{that at each point of~${\mathcal{R}}_k$ has locally a smooth tangent function by above,}}\\
&{\mbox{with~$u_k$ converging to~$u_Q$
locally uniformly in~${\mathcal{R}}$,}}\\&{\mbox{and such that, for every~$x\in {\mathcal{R}}_k$,}}\\&
\int_{\mathcal{R}} \frac{u_k(x)-u_k(y)}{|x-y|^{n+s}}\,d{\mathcal{H}}^{n-1}_y\ge0.\end{split}\end{equation}
We stress that the last line of~\eqref{ne53.23} is intended in the pointwise sense
(the left-hand side being possibly equal to~$+\infty$, but not~$-\infty$, due to the smooth touching from above).

To prove~\eqref{ne53.23}, we argue as follows.
First of all, we can suppose that~$u_Q$ is not constant, otherwise we are done.
Then, given~$k\in\N$, we can replace~$u_Q$ with the function
$$\begin{dcases}
\sup_{\mathcal{R}}u_Q-\displaystyle\frac1k & {\mbox{ if }} u_Q(x)>\sup_{\mathcal{R}}u_Q-\displaystyle\frac1k,\\
u_Q(x)&{\mbox{ if }}u_Q(x)\le\sup_{\mathcal{R}}u_Q-\displaystyle\frac1k.
\end{dcases}$$
Up to this initial approximation and thanks to~\eqref{EQ-01}, we can suppose that~$u_Q$ is a strict supersolution, namely, for every~$x\in {\mathcal{R}}$,
\begin{equation}\label{EQ-01-lanu395} \int_{\mathcal{R}} \frac{u_Q(x)-u_Q(y)}{|x-y|^{n+s}}\,d{\mathcal{H}}^{n-1}_y>0\end{equation}
in the viscosity sense.

Now, given~$k$, we consider the family~${\mathcal{F}}_k$
of balls~$B$ of radius~$\frac1k$
whose closure lies at distance~$\frac1k$ from~$S$.
For every~$B\in{\mathcal{F}}_k$,
we define~$v_B$ the $s$-harmonic replacement of~$u_Q$ in~$B$,
with~$v_B=u_Q$ outside~$B$. Then, we set
\begin{equation}\label{Deefjmdcuk} u_k(x):=\inf_{B\in{\mathcal{F}}_k}v_B(x).\end{equation}
By construction, we have that~$v_B\le u_Q$, $v_B$ is locally  H\"older continuous
with H\"older constant depending only on~${\mathcal{R}}$, $n$, $s$ and~$k$,
and~$v_B$ is $s$-superharmonic in the viscosity sense, hence the same holds true for~$u_k$.
Actually, the H\"older bound on~$v_B$ guarantees that the infimum in~\eqref{Deefjmdcuk} is attained
and we pick~$B_x\in{\mathcal{F}}_k$ such that~$u_k(x)=v_{B_x}(x)$.

Note that, without loss of generality
\begin{equation}\label{sd0cjoXqowdjfcvC}
x\in B_x,
\end{equation}
otherwise
$$u_Q(x)=u_k(x)=v_{B_x}(x)=\inf_{B\in{\mathcal{F}}_k}v_B(x)\le v_{B_{\frac1k}(x)}(x)\le u_Q(x),$$
hence we could take~$B_x:=B_{\frac1k}(x)$ and obtain~\eqref{sd0cjoXqowdjfcvC}.

Thus, by~\eqref{sd0cjoXqowdjfcvC}, we pick a smooth function~$f$ touching~$v_{B_x}$ from above at~$x$ and we have that~$f\ge v_{B_x}\ge u_k$,
with equalities at~$x$, showing that,
at each point of~${\mathcal{R}}_k$, the function~$u_k$ has locally a smooth tangent function by above.

Hence, if~$f$ is a smooth function touching~$u_k$ at~$x\in {\mathcal{R}}_k$, the viscosity solution property of~$u_k$ gives that
$$0\le\int_{\mathcal{R}} \frac{f(x)-f(y)}{|x-y|^{n+s}}\,d{\mathcal{H}}^{n-1}_y\le
\int_{\mathcal{R}} \frac{u_k(x)-u_k(y)}{|x-y|^{n+s}}\,d{\mathcal{H}}^{n-1}_y,$$
as claimed in~\eqref{ne53.23}.

Furthermore, by scaling, we have that~$v_{B_x}$ is H\"older continuous for some H\"older exponent~$\alpha\in(0,1)$, with H\"older norm bounded in dependence only of~${\mathcal{R}}$, $n$ and~$s$. 
This gives that, if~$q\in\partial B_x$,
\begin{eqnarray*}&& u_Q(x)\ge u_k(x)=v_{B_x}(x)\ge v_{B_x}(q)-C|x-q|^\alpha\\&&\quad=u_{Q}(q)-C|x-q|^\alpha\ge
u_{Q}(x)-C|x-q|^\alpha\ge u_Q(x)-\frac{C}{k^\alpha},\end{eqnarray*}
up to renaming~$C$, and this entails that~$u_k$ converges to~$u_Q$
locally uniformly in~${\mathcal{R}}$.

It remains to show that, for a given~$k\in\N$, the function~$u_k$ is locally Lipschitz.
To this end, it suffices to check that for every~$x\in{\mathcal{R}}_k$
there exists~$\rho_x>0$ such that~$u_k$ is Lipschitz in~$B_{\rho_x}(x)$
(from this, the locally Lipschitz property follows by a finite covering argument with balls of radius~$\frac{\rho_x}{2}$).

For this, up to exchanging~$y$ and~$z$, it is enough to check that when~$y$, $z\in B_{\rho_x}(x)$, we have that
\begin{equation}\label{0udojf07nvb7627bjojlfFAGerwthu4}
u_k(y)-u_k(z)\le C_x|y-z|,
\end{equation}
for some~$C_x>0$.

To check this property, we employ~\eqref{EQ-01-lanu395} and
the maximum principle for nonlocal equations to see that~$u_k(x)=v_{B_x}(x)<u_Q(x)$. Hence, we deduce from the H\"older continuity
of these functions that there exists~$\rho_x>0$ such that
\begin{equation}\label{0udojf07nvb7627bjojlfFAGerwthu}
{\mbox{$u_k(p)<u_Q(p)$ for all~$p\in B_{2\rho_x}(x)$.}}\end{equation}
Then we claim that
\begin{equation}\label{0udojf07nvb7627bjojlfFAGerwthu2}
{\mbox{there exists~$\delta_x\in(0,\rho_x)$ such that, for all~$p\in B_{\rho_x}(x)$, we have that~$B_{\delta_x}(p)\subseteq B_p$.}}
\end{equation}
Indeed, if not, there would exist a sequence of points~$p_j\in B_{\rho_x}(x)$ such that~$p_j\to p_\infty$
as~$j\to+\infty$, with~$u_Q(p_\infty)=u_k(p_\infty)$.
Since~$p_\infty\in\overline{B_{\rho_x}(x)}\subseteq B_{2\rho_x}(x)$, we have reached a contradiction with~\eqref{0udojf07nvb7627bjojlfFAGerwthu} and the proof of~\eqref{0udojf07nvb7627bjojlfFAGerwthu2}
is complete.

Now, as above, given~$y$, $z\in B_{\rho_x}(x)$, we touch~$v_{B_z}$ at~$z$ with a smooth function~$f_z$ by above in~$B_z$ and, thanks to~\eqref{0udojf07nvb7627bjojlfFAGerwthu2}, we can suppose that~$f_z$ is above~$u_k$
in~$B_{2\rho_x}(x)$. We also denote by~$L_z$ the Lipschitz constant of~$f_z$ and we stress that,
in view of~\eqref{0udojf07nvb7627bjojlfFAGerwthu2} and the interior regularity for nonlocal equations,
we can suppose that~$L_z$ is bounded by a quantity~$C_x$ depending only on~${\mathcal{R}}$, $n$, $s$, and~$\delta_x$.

This yields that
$$ u_k(y)\le f_z(y)\le f_z(z)+L_z|y-z|=u_k(z)+L_z|y-z|\le u_k(z)+C_x|y-z|,$$
which establishes~\eqref{0udojf07nvb7627bjojlfFAGerwthu4}.

The proof of~\eqref{ne53.23} is thereby complete.

We now use~\eqref{ne53.23} to see that, if~$\zeta\in C^2(\R^n,[0,+\infty))$ with support of~$\zeta\big|_{\mathcal{R}}$ bounded and contained in~${\mathcal{R}}$,
\begin{equation}\label{ne53.2.x3}\begin{split}
&\iint_{{\mathcal{R}}\times {\mathcal{R}}} \frac{(u_k(x)-u_k(y))(\zeta(x)-\zeta(y))}{|x-y|^{n+s}}\,d{\mathcal{H}}^{n-1}_x\,d{\mathcal{H}}^{n-1}_y\\&\qquad=\lim_{\e\searrow0}
\iint_{{{\mathcal{R}}\times {\mathcal{R}}}\atop{\{|x-y|\ge\e\}}} \frac{(u_k(x)-u_k(y))(\zeta(x)-\zeta(y))}{|x-y|^{n+s}}\,d{\mathcal{H}}^{n-1}_x\,d{\mathcal{H}}^{n-1}_y\\&\qquad=2\lim_{\e\searrow0}
\iint_{{{\mathcal{R}}\times {\mathcal{R}}}\atop{\{|x-y|\ge\e\}}} \frac{(u_k(x)-u_k(y))\zeta(x)}{|x-y|^{n+s}}\,d{\mathcal{H}}^{n-1}_x\,d{\mathcal{H}}^{n-1}_y\\&\qquad\ge0.\end{split}
\end{equation}

Now we take~$\eta\in C^\infty_0(B_{R_0+1},[0,1])$ with~$\eta=1$ in~$B_{R_0}$ and~$|\nabla\eta|\le2$.
We test the equation in~\eqref{ne53.23} against~$(Q+1-u_k)\eta^2$, which is nonnegative in~$B_{R_0+1}$ when~$k$ is sufficiently large,
and we find that, for some~$C>0$ possibly varying from line to line,
\begin{eqnarray*}
0&\le&2\iint_{\mathcal{R}\times{\mathcal{R}}} \frac{\big(u_k(x)-u_k(y)\big) \,\big(Q+1-u_k(x)\big)\,\eta^2(x)}{|x-y|^{n+s}}\,d{\mathcal{H}}^{n-1}_y\,d{\mathcal{H}}^{n-1}_x\\&=&
\iint_{\mathcal{R}\times{\mathcal{R}}} \frac{\big(u_k(x)-u_k(y)\big) \,\big(Q+1-u_k(y)\big)\,\big(\eta(x)
+\eta(y)\big)\big(\eta(x)-\eta(y)\big)
}{|x-y|^{n+s}}\,d{\mathcal{H}}^{n-1}_y\,d{\mathcal{H}}^{n-1}_x\\&&\qquad-
\iint_{\mathcal{R}\times{\mathcal{R}}} \frac{\big|u_k(x)-u_k(y)\big|^2\,\eta^2(x)}{|x-y|^{n+s}}\,d{\mathcal{H}}^{n-1}_y\,d{\mathcal{H}}^{n-1}_x\\&\le&
C\iint_{\mathcal{R}\times{\mathcal{R}}} \frac{\big|Q+1-u_k(y)\big|^2\,\big|\eta(x)-\eta(y)\big|^2
}{|x-y|^{n+s}}\,d{\mathcal{H}}^{n-1}_y\,d{\mathcal{H}}^{n-1}_x\\&&\qquad-\frac12
\iint_{\mathcal{R}\times{\mathcal{R}}} \frac{\big|u_k(x)-u_k(y)\big|^2\,\eta^2(x)}{|x-y|^{n+s}}\,d{\mathcal{H}}^{n-1}_y\,d{\mathcal{H}}^{n-1}_x\\&\le&
C\,(Q+1)^2-\frac12
\iint_{(\mathcal{R}\cap B_{R_0})\times{\mathcal{R}}} \frac{\big|u_k(x)-u_k(y)\big|^2}{|x-y|^{n+s}}\,d{\mathcal{H}}^{n-1}_y\,d{\mathcal{H}}^{n-1}_x.
\end{eqnarray*}
The claim in~\eqref{ORA:2.4} now follows by sending~$k\to+\infty$.

As a byproduct of~\eqref{ORA:2.4} we also have that, for all $C^2$ and compactly supported functions~$\psi$,
\begin{equation*}
\begin{split}&\lim_{k\to+\infty}
\iint_{{\mathcal{R}}\times {\mathcal{R}}} \frac{(u_k(x)-u_k(y))(\psi(x)-\psi(y))}{|x-y|^{n+s}}\,d{\mathcal{H}}^{n-1}_x\,d{\mathcal{H}}^{n-1}_y
\\&=2\lim_{k\to+\infty}
\iint_{{\mathcal{R}}\times {\mathcal{R}}} \frac{u_k(x)\,(\psi(x)-\psi(y))}{|x-y|^{n+s}}\,d{\mathcal{H}}^{n-1}_x\,d{\mathcal{H}}^{n-1}_y\\&=2
\iint_{{\mathcal{R}}\times {\mathcal{R}}} \frac{u_Q(x)\,(\psi(x)-\psi(y))}{|x-y|^{n+s}}\,d{\mathcal{H}}^{n-1}_x\,d{\mathcal{H}}^{n-1}_y\\&=\iint_{{\mathcal{R}}\times {\mathcal{R}}} \frac{(u_Q(x)-u_Q(y))(\psi(x)-\psi(y))}{|x-y|^{n+s}}\,d{\mathcal{H}}^{n-1}_x\,d{\mathcal{H}}^{n-1}_y.
\end{split}
\end{equation*}
Thanks to this observation, we can now pass to the limit as~$k\to+\infty$ in~\eqref{ne53.2.x3}
and obtain~\eqref{EQ-03} for~$\zeta\in C^2(\R^n,[0,+\infty))$, 
and thus, by density (see e.g.~\cite[Theorem~2.7]{MR1481970}), for~$\zeta$ as in~\eqref{EQ-03.dwdv},
as desired.

It remains to prove~\eqref{ORA:2.5}. To this end, we use~\eqref{o3ePP-1-pre} and~\eqref{o3ePP-1}
to follow a capacity argument in~\cite[Proposition~3.7 and equation~(6.11)]{MR3934589}. 
Namely, we find a sequence of
functions~$w_{j}$ with values in~$[0,1]$, with~$w_j=1$ in an open neighborhood of~$S$,
converging to zero in~${\mathcal{R}}$, up to a set of null~$(n-1)$-dimensional
Hausdorff measure, and such that
\begin{equation}\label{ORA:2.4o12ehdfnv}
\lim_{j\to+\infty}\iint_{{\mathcal{R}}\times{\mathcal{R}}} \frac{|w_j(x)-w_j(y)|^2}{|x-y|^{n+s}}\,d{\mathcal{H}}^{n-1}_x\,d{\mathcal{H}}^{n-1}_y=0.\end{equation}
Thus, given~$M>0$, we use~\eqref{EQ-03} with~$\zeta:=W_j\phi_M$, with~$\phi_M:=\min\{\phi,M\}$ and~$
W_j:=1-w_j$, and we see that
\begin{equation}\label{ORA:2.4o12ehdfnv.3}\begin{split}
0&\le
2\iint_{{\mathcal{R}}\times {\mathcal{R}}} \frac{\big(u_Q(x)-u_Q(y)\big)\big(W_j(x)\phi_M(x)-W_j(y)\phi_M(y)\big)}{|x-y|^{n+s}}\,d{\mathcal{H}}^{n-1}_x\,d{\mathcal{H}}^{n-1}_y \\&=
\iint_{{\mathcal{R}}\times {\mathcal{R}}} \frac{\big(u_Q(x)-u_Q(y)\big)\big(\phi_M(x)-\phi_M(y)\big)\,W_j(x)}{|x-y|^{n+s}}\,d{\mathcal{H}}^{n-1}_x\,d{\mathcal{H}}^{n-1}_y \\&\quad
-\iint_{{\mathcal{R}}\times {\mathcal{R}}} \frac{\big(u_Q(x)-u_Q(y)\big)\big(w_j(x)-w_j(y)\big)\,\phi_M(y)}{|x-y|^{n+s}}\,d{\mathcal{H}}^{n-1}_x\,d{\mathcal{H}}^{n-1}_y .
\end{split}\end{equation}
Besides, by~\eqref{ORA:2.4} and the Dominated Convergence Theorem,
\begin{equation}\label{ORA:2.4o12ehdfnv.2}\begin{split}&
\lim_{j\to+\infty}\iint_{{\mathcal{R}}\times {\mathcal{R}}} \frac{\big(u_Q(x)-u_Q(y)\big)\big(\phi_M(x)-\phi_M(y)\big)\,W_j(x)}{|x-y|^{n+s}}\,d{\mathcal{H}}^{n-1}_x\,d{\mathcal{H}}^{n-1}_y\\&\qquad=\iint_{{\mathcal{R}}\times {\mathcal{R}}} \frac{\big(u_Q(x)-u_Q(y)\big)\big(\phi_M(x)-\phi_M(y)\big)}{|x-y|^{n+s}}\,d{\mathcal{H}}^{n-1}_x\,d{\mathcal{H}}^{n-1}_y.
\end{split}\end{equation}

On a similar note, recalling~\eqref{ORA:2.4} and~\eqref{ORA:2.4o12ehdfnv},
$$\lim_{j\to+\infty}\iint_{{\mathcal{R}}\times {\mathcal{R}}} \frac{\big(u_Q(x)-u_Q(y)\big)\big(w_j(x)-w_j(y)\big)\,\phi_M(y)}{|x-y|^{n+s}}\,d{\mathcal{H}}^{n-1}_x\,d{\mathcal{H}}^{n-1}_y =0.$$

Thanks to this and~\eqref{ORA:2.4o12ehdfnv.2}, we can pass to the limit as~$j\to+\infty$ in~\eqref{ORA:2.4o12ehdfnv.3} and conclude that $$\iint_{{\mathcal{R}}\times {\mathcal{R}}} \frac{\big(u_Q(x)-u_Q(y)\big)\big(\phi_M(x)-\phi_M(y)\big)}{|x-y|^{n+s}}\,d{\mathcal{H}}^{n-1}_x\,d{\mathcal{H}}^{n-1}_y\ge0.$$
This gives~\eqref{ORA:2.5}, after sending~$M\to+\infty$ and recalling again~\eqref{ORA:2.4},
as desired.\end{proof}

\section{H\"older estimates for fractional operators in a geometric setting}\label{LKMHOLE}

Here we present a H\"older regularity result in a geometric setting.
Namely, differently from the cases already treated in the literature, the equation considered here is defined by an integral on a portion of a hypersurface and the kernel is not necessarily symmetric,  but only symmetric up to a suitable remainder. 

\begin{lemma}\label{DERGED-Ge}
Let~${\mathcal{R}}$ be a portion of a $C^2$ hypersurface in~$B_2$ 
that is $C^2$-diffeomorphic to~$B_2\cap\{x_n=0\}$.
Let~$\nu$ be the unit normal vector field of~${\mathcal{R}}$ and~$f\in L^\infty(B_2)$.

Let~$K:{\mathcal{R}}\times{\mathcal{R}}\to[0,+\infty]$ be such that
\begin{equation}\label{jqowdfendP2}\begin{split}&
{\mbox{for all~$x\ne x_0$, we have that }}\;
\frac{1}{C|x-x_0|^{n+s}} \leq K(x,x_0)\le \frac{C}{|x-x_0|^{n+s}},\\ &{\mbox{and, for all~$y\in B_\delta\setminus\{0\}$, }}
|K(y_{x_0}^+,x_0)-K(y_{x_0}^-,x_0)|\le C|y|^{1-n-s},
\end{split}\end{equation}
for some (small) $\delta\in(0,1)$ and some~$C\ge1$.

Assume that, for all~$x_0\in{{\mathcal{R}}}\cap B_1$, the function~$v\in L^\infty({\mathcal{R}})$
is a solution of
$$\int_{{\mathcal{R}}}
\big( v(x_0)-v(x)\big)\,K(x,x_0)
\,d{\mathcal{H}}^{n-1}_x=f(x_0).
$$

Then,  $v$ is H\"older continuous in~${{\mathcal{R}}}\cap B_{1/2}$. More precisely,
\begin{equation}\label{09ugh0oi90pot00-ikjmrflxP} \|v\|_{C^\alpha({\mathcal{R}}\cap B_{1/2})}\le C_0\Big(
\|f\|_{L^\infty(B_2)}+ \|v\|_{L^\infty({\mathcal{R}})}\Big),\end{equation}
where $\alpha\in(0,1)$ and~$C_0>0$ depend only on~$n$, $s$, the regularity parameters of~${\mathcal{R}}$ and the structural constant~$C$ of the kernel~$K$ in~\eqref{jqowdfendP2}.
\end{lemma}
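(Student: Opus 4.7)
The strategy is to flatten ${\mathcal R}$ via the given $C^2$-diffeomorphism and recognize the resulting equation on a flat piece of $\R^{n-1}$ as being driven by a stable-like symmetric kernel of order $1+s$ perturbed by a genuinely subcritical antisymmetric remainder; once this structure is in place, \eqref{09ugh0oi90pot00-ikjmrflxP} follows from the standard interior $C^\alpha$-theory for non-symmetric integro-differential operators.

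Concretely, let $\Psi:B_2\cap\{x_n=0\}\to{\mathcal R}$ be the diffeomorphism in the hypotheses, and set $\tilde v:=v\circ\Psi$, $\tilde f:=f\circ\Psi$, and
$$\widetilde K(y,y_0):=K(\Psi(y),\Psi(y_0))\,J_\Psi(y),$$
where $J_\Psi$ is the $C^1$ surface Jacobian (bounded above and away from zero). The equation becomes
$$\int \bigl(\tilde v(y_0)-\tilde v(y)\bigr)\,\widetilde K(y,y_0)\,dy=\tilde f(y_0)\qquad\text{for }y_0\in B_1\cap\{y_n=0\}.$$
Since $|\Psi(y)-\Psi(y_0)|\asymp |y-y_0|$ and, up to errors of order $|y-y_0|^2$, the Euclidean reflection $y\mapsto 2y_0-y$ in the flat model corresponds to the ``$\pm$''-reflection on ${\mathcal R}$ introduced before Lemma~\ref{PRIMOLE}, both inequalities in \eqref{jqowdfendP2} transfer to $\widetilde K$ (possibly with a larger constant).

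I then split $\widetilde K=\widetilde K_{\mathrm s}+\widetilde K_{\mathrm a}$ into its symmetric and antisymmetric parts with respect to $y\mapsto 2y_0-y$. Thanks to \eqref{jqowdfendP2},
$$\widetilde K_{\mathrm s}(y,y_0)\asymp|y-y_0|^{-(n-1)-(1+s)}\qquad\text{and}\qquad |\widetilde K_{\mathrm a}(y,y_0)|\le C|y-y_0|^{-(n-1)-s},$$
so the operator $L_{\mathrm s}$ is uniformly elliptic of stable-like order $1+s\in(1,2)$ on $\R^{n-1}$ (comparable to $(-\Delta)^{(1+s)/2}$), while the non-symmetric part $L_{\mathrm a}$ scales as an operator of order $s<1+s$, hence is strictly subcritical. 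Rewriting the equation as $L_{\mathrm s}\tilde v=\tilde f-L_{\mathrm a}\tilde v$, the antisymmetry of $\widetilde K_{\mathrm a}$ lets one express
$$L_{\mathrm a}\tilde v(y_0)=\tfrac12\int\bigl(\tilde v(2y_0-y)-\tilde v(y)\bigr)\,\widetilde K_{\mathrm a}(y,y_0)\,dy,$$
which is controlled in $L^\infty$ by $C\|v\|_{L^\infty}$ via a near/far splitting (in the near region one uses, if needed, a preliminary modulus of continuity bootstrapped from the equation itself, since the antisymmetric kernel scales in a strictly subcritical way).

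With $L_{\mathrm s}\tilde v$ controlled in $L^\infty$ by $C(\|f\|_{L^\infty}+\|v\|_{L^\infty})$ and $L_{\mathrm s}$ a symmetric stable-like operator, the interior $C^\alpha$-estimate for $\tilde v$ is by now classical (in the spirit of Silvestre, Caffarelli--Silvestre, Kassmann--Mimica, and in the closely related setting of Cabr\'e--Cozzi~\cite{MR3934589}); pulling the bound back through $\Psi^{-1}$, which is bi-Lipschitz with $C^2$ control, yields \eqref{09ugh0oi90pot00-ikjmrflxP}. The chief obstacle is the lack of symmetry of $\widetilde K$, which rules out the direct symmetric energy/De~Giorgi approach: the essential input provided by \eqref{jqowdfendP2} is precisely the strict order gap $s<1+s$, which guarantees that $L_{\mathrm a}$ is a genuine lower-order perturbation and can be absorbed into the right-hand side, thereby reducing matters to the well-understood symmetric case.
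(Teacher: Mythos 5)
Your first two steps coincide with the paper's proof: flattening ${\mathcal R}$ by the diffeomorphism, passing to the kernel $\widetilde K(y,y_0)=K(\Psi(y),\Psi(y_0))J_\Psi(y)$, and checking that both bounds in \eqref{jqowdfendP2} survive (comparability with $|y-y_0|^{-(n-1)-(1+s)}$ and the antisymmetry bound $C|y-y_0|^{-(n-1)-s}$). The divergence, and the genuine gap, is in the last step. You rewrite the equation as $L_{\mathrm s}\tilde v=\tilde f-L_{\mathrm a}\tilde v$ and assert that $L_{\mathrm a}\tilde v$ is controlled in $L^\infty$ by $C\|v\|_{L^\infty}$ via a near/far splitting. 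The far part is fine, but the near part is not: the only information on the antisymmetric kernel is $|\widetilde K_{\mathrm a}(y,y_0)|\le C|y-y_0|^{-(n-1)-s}$, which is \emph{not} integrable near $y_0$ in $\R^{n-1}$, so for a function that is merely bounded the quantity $\int\bigl(\tilde v(2y_0-y)-\tilde v(y)\bigr)\widetilde K_{\mathrm a}(y,y_0)\,dy$ need not converge, let alone be bounded by $C\|v\|_{L^\infty}$. To make the near part finite you would need a modulus $|\tilde v(2y_0-y)-\tilde v(y)|\lesssim|y-y_0|^{\beta}$ with $\beta>s$, i.e.\ precisely (and in fact more than) the H\"older continuity the lemma is supposed to produce; your parenthetical appeal to ``a preliminary modulus of continuity bootstrapped from the equation itself'' is therefore circular, and even an a priori $C^\alpha$ bound with $\alpha\le s$ would not close it. So the reduction to the symmetric theory with a bounded right-hand side does not go through as stated.

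The correct way to exploit the order gap $s<1+s$ is not to move the antisymmetric part to the right-hand side, but to keep it inside the operator and run (or quote) a regularity theory that tolerates nonsymmetric kernels with exactly the cancellation $|\widetilde K(y_0+z,y_0)-\widetilde K(y_0-z,y_0)|\le C|z|^{-(n-1)-s}$: there the antisymmetric part enters the oscillation-decay/scaling scheme as a scale-subcritical drift-type term, evaluated only on the barrier or on functions already known to be regular, never on the raw bounded solution. This is what the paper does: after the same flattening, a cut-off ($u:=v\chi_{B_{3/4}}$, extended by zero, with the tail absorbed into a bounded right-hand side $g$), it invokes the H\"older estimates for nonsymmetric integro-differential equations in \cite{MR2244602} (Theorem~5.4 and Remark~4.4), \cite{MR2995098}, or \cite{KW}, and then pulls the estimate back through $\Psi$. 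If you want a self-contained argument along your lines, you would have to reprove such a nonsymmetric H\"older estimate (growth lemma with the drift term handled by scaling), which is substantially more than citing the symmetric stable-like theory. A minor additional omission in your write-up is the localization/extension step needed to state the flattened equation on all of $\R^{n-1}$ (or to control the tails), but that is easily repaired; the absorption of $L_{\mathrm a}\tilde v$ is the real obstruction.
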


\begin{proof} We let~$u:=v\chi_{B_{3/4}}$ and we see that, for all~$x_0\in{{\mathcal{R}}}\cap B_{2/3}$,
\begin{eqnarray*}&&
\int_{{\mathcal{R}}}
\big( u(x_0)-u(x)\big)\,K(x,x_0)
\,d{\mathcal{H}}^{n-1}_x=\int_{{\mathcal{R}}}
\big( v(x_0)-v(x)\chi_{B_{3/4}}(x)\big)\,K(x,x_0)
\,d{\mathcal{H}}^{n-1}_x\\&&\qquad=f(x_0)+\int_{{\mathcal{R}}\setminus B_{3/4}}
v(x)\,K(x,x_0)
\,d{\mathcal{H}}^{n-1}_x=:g(x_0)
\end{eqnarray*}
and
\begin{eqnarray*}
&& |g(x_0)|\le \|f\|_{L^\infty(B_2)}+C\|v\|_{L^\infty({\mathcal{R}})}
\int_{{\mathcal{R}}\setminus B_{3/4}}\frac{d{\mathcal{H}}^{n-1}_x}{|x-x_0|^{n+s}}\\&&\qquad\qquad\qquad
\le C\Big(  \|f\|_{L^\infty(B_2)}+ {\mathcal{H}}^{n-1}({\mathcal{R}})\|v\|_{L^\infty({\mathcal{R}})}\Big)=:C_\star,
\end{eqnarray*}
up to renaming~$C$ from line to line.

We now take~$B_2':=B_2\cap\{x_n=0\}$ and a $C^2$-diffeomorphism~$\phi:{\mathcal{R}}\to B'_2$. We can also adapt the diffeomorphism so that~$\phi$ coincides with the projection in a small neighborhood of~$x_0$
and~$\phi({\mathcal{R}}\cap B_{2/3})$ contains~$B'_{3/4}$.
We define~$U(y):=u(\phi^{-1}(y))$ and we observe that, for all~$y_0=\phi(x_0)\in B'_{3/4}$,
\begin{equation}\label{02ojfe22}\begin{split}& G(y_0):=g(\phi^{-1}(y_0))=
\int_{{\mathcal{R}}}
\big( u(\phi^{-1}(y_0))-u(x)\big)\,K(x,\phi^{-1}(y_0))
\,d{\mathcal{H}}^{n-1}_x\\&\qquad=\int_{B'_2}
\big( U(y_0)-U(y)\big)\,K_*(y,y_0)
\,d{\mathcal{H}}^{n-1}_y,\end{split}
\end{equation}
where
\begin{equation}\label{02ojfe22-2}
K_*(y,y_0):=K(\phi^{-1}(y),\phi^{-1}(y_0))\,J(y),\end{equation} for a suitable Jacobian function~$J$, which is bounded and bounded away from zero.

Thus, by~\eqref{jqowdfendP2}, we have that~$K_*(y,y_0)$ is bounded from above and below, up to constants, by~$|\phi^{-1}(y)-\phi^{-1}(y_0)|^{-(n+s)}$,
which in turn is comparable to~${|y-y_0|^{-(n+s)}}$, since~$\phi$ is a diffeomorphism.

Additionally, if
$$ L(y_0,z):=K_*(y_0+z,y_0)-K_*(y_0-z,y_0),$$
we have that
\begin{equation}\label{SkPALmzyHS20}\begin{split}
|L(y_0,z) |&\le
|K(\phi^{-1}(y_0+z),\phi^{-1}(y_0))-K(\phi^{-1}(y_0-z),\phi^{-1}(y_0))|\,J(y_0+z)
\\&\qquad\qquad\qquad
+|K(\phi^{-1}(y_0-z),\phi^{-1}(y_0))|\,|J(y_0+z)-J(y_0-z)|\\
&\le C |K(\phi^{-1}(y_0+z),\phi^{-1}(y_0))-K(\phi^{-1}(y_0-z),\phi^{-1}(y_0))|
\\&\qquad\qquad\qquad+C|z|^{-n-s}\,|J(y_0+z)-J(y_0-z)|\\&
\le C|z|^{1-n-s},\end{split}\end{equation}
up to renaming~$C$ line after line.

Actually, up to renaming~$G$ by an additional bounded function, if we extend~$U$ by zero outside~$B'_2$ we can rewrite~\eqref{02ojfe22} in the form
\begin{equation}\label{0iqwjdfle-3tyy} \int_{\R^{n-1}}
\big( U(y_0)-U(y)\big)\,K_*(y,y_0)\,d{\mathcal{H}}^{n-1}_y=G(y_0),\end{equation}
for all~$y_0\in B_{3/4}'$.

We can thereby apply the regularity theory for integro-differential equations
(see e.g.~\cite[Theorem 5.4 and Remark 4.4]{MR2244602}, or~\cite[Theorem~3.1]{MR2995098},
or~\cite[Theorem~1.1(ii)]{KW})
and obtain the desired H\"older estimate for~$U$,
and therefore for~$u$, and then for~$v$.
\end{proof}

\section{An upper bound in a geometric setting}\label{LKMHOLE-1}

In the proof of our main result, as it will be presented in Section~\ref{HAR:NA},
a somewhat delicate issue comes from the possibility of extracting a convergent sequence
from the renormalized distance between minimal sheets. Roughly speaking, the plan would
be to remove an arbitrarily small neighborhood of the singular set and focus on an arbitrarily large ball,
obtain estimates in this domain which are uniform with respect to the sequences under consideration
(with constants possibly depending on the neighborhood of the singular set and
on the radius of the large ball),
pass the sequence to the limit, and only at the end of the argument
shrink the neighborhood of the singular set and invade the space by larger and larger balls.

In this setting, Lemma~\ref{DERGED-Ge} is instrumental to provide the necessary compactness.
However, to use this result, one needs a bound in~$L^\infty$, as dictated by the right-hand side of~\eqref{09ugh0oi90pot00-ikjmrflxP}.

Such a bound does not come completely for free, not even in the situation in which
the distance between minimal sheets is normalized to be~$1$ at some point,
due to the possible divergence of this function at the singular set and the correspondingly large
values that this function could, in principle, attain in the domain under consideration. To get around such a difficulty,
we present here an $L^\infty$ bound in terms of an integral estimate (as made precise by~\eqref{XM-skdadM-BIS} here below).
This integral control will be then checked in our specific case via a sliding method (as made precise by the forthcoming Lemma~\ref{ZX-ipfe-jJSL}).

This strategy will thus allow us to pass to the limit locally away from the singular set: summarizing,
one needs to just establish an integral bound (that will come from Lemma~\ref{ZX-ipfe-jJSL}), to deduce from it a bound in~$L^\infty$ (coming from the next result),
and thus obtain a bound in~$C^\alpha$ (coming from Lemma~\ref{DERGED-Ge}), which in turns provides the desired compactness property (by the Arzel\`a-Ascoli Theorem).

\begin{lemma}\label{VOL}
Let~${\mathcal{R}}$ be a portion of a $C^2$ hypersurface in~$B_2$ 
that is $C^2$-diffeomorphic to~$B_2\cap\{x_n=0\}$.
Let~$\nu$ be the unit normal vector field of~${\mathcal{R}}$.

Let~$K:{\mathcal{R}}\times{\mathcal{R}}\to[0,+\infty]$ be such that
\begin{equation}\label{IPSJLkear34FPA}\begin{split}&
{\mbox{for all~$x\ne x_0$, we have that }}\;
\frac{1}{C|x-x_0|^{n+s}} \leq K(x,x_0)\le \frac{C}{|x-x_0|^{n+s}},\\ &{\mbox{and, for all~$y\in B_\delta\setminus\{0\}$, }}
|K(y_{x_0}^+,x_0)-K(y_{x_0}^-,x_0)|\le C|y|^{1-n-s},
\end{split}\end{equation}
for some (small) $\delta\in(0,1)$ and some~$C\ge1$.

Assume that, for all~$x_0\in{{\mathcal{R}}}\cap B_1$, the function~$v$ satisfies
\begin{equation}\label{XM-skdadM}\begin{split}&\int_{{{\mathcal{R}}}}
\big( v(x_0)-v(x)\big)\,K(x,x_0)
\,d{\mathcal{H}}^{n-1}_x-a(x_0)v(x_0)\le M,\end{split}\end{equation}
for some~$M\ge0$ and~$a\in L^\infty(\R^n)$,
and that
\begin{equation}\label{XM-skdadM-BIS} \int_{{\mathcal{R}}}v^+(x)\,d{\mathcal{H}}^{n-1}_x\le M',\end{equation}
for some~$M'\ge0$.

Then, $v$ is bounded from above in~${\mathcal{R}}\cap B_{1/2}$, with
$$ \sup_{{\mathcal{R}}\cap B_{1/2}}v\le C_0
\left(M+M'\right)
,$$
with~$C_0>0$ depending only on~$n$, $s$, $\|a\|_{L^\infty(\R^n)}$,
the regularity parameters of~${\mathcal{R}}$ and the structural constant~$C$ of the kernel~$K$ in~\eqref{IPSJLkear34FPA}.
\end{lemma}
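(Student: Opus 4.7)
The plan is to flatten the hypersurface and then invoke a local maximum principle (an \(L^\infty\)-from-\(L^1\) estimate) for subsolutions of nonlocal equations whose kernels satisfy the structural conditions \eqref{IPSJLkear34FPA}.

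First I will use a \(C^2\)-diffeomorphism \(\phi:\mathcal{R}\to B'_2:=B_2\cap\{x_n=0\}\), exactly as in the proof of Lemma~\ref{DERGED-Ge}, to rewrite the inequality \eqref{XM-skdadM} on the flat domain \(B'_2\subset\R^{n-1}\). Setting \(V(y):=v(\phi^{-1}(y))\) and conjugating the operator by \(\phi\), one obtains a new kernel \(K_\ast\) analogous to the one in \eqref{02ojfe22-2}, which still satisfies the two-sided bound and the antisymmetry control in \eqref{IPSJLkear34FPA} (possibly with different structural constants). After extending \(V\) by zero outside \(\phi(\mathcal{R})\) and absorbing the resulting tail contribution into an \(L^\infty\) source \(g\) (controlled by \(\|V^+\|_{L^1}\le CM'\) thanks to \eqref{XM-skdadM-BIS} combined with the upper bound on \(K_\ast\)), the inequality \eqref{XM-skdadM} becomes
\begin{equation*}
\int_{\R^{n-1}}\big(V(y_0)-V(y)\big)K_\ast(y,y_0)\,d{\mathcal{H}}^{n-1}_y-a_\ast(y_0)V(y_0)\le M+g(y_0),\qquad y_0\in B'_{3/4},
\end{equation*}
with \(\|a_\ast\|_{L^\infty}\le\|a\|_{L^\infty}\) and \(\|g\|_{L^\infty}\le C(1+M')\).

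Next I will invoke the \(L^\infty\)--\(L^1\) local boundedness (or \emph{local maximum}) estimate for subsolutions of nonlocal equations of the type produced above. This is the natural companion of the H\"older estimate used in Lemma~\ref{DERGED-Ge} and belongs to the Krylov--Safonov theory for fractional operators developed in~\cite{MR2244602, MR2995098, KW}: under the pointwise bound~\eqref{IPSJLkear34FPA} on \(K_\ast\) it produces
\begin{equation*}
\sup_{B'_{1/2}}V^+\le C\Big(\|V^+\|_{L^1(\R^{n-1})}+\|M+g\|_{L^\infty}+\|a_\ast\|_{L^\infty}\sup_{B'_{3/4}}V^+\Big).
\end{equation*}
A standard iteration on shrinking concentric balls then absorbs the last term on the right-hand side and gives \(\sup_{B'_{1/2}}V\le C_0(1+M+M')\). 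Mapping back through \(\phi\), which is bi-Lipschitz, translates this into the claimed bound on \(\mathcal{R}\cap B_{1/2}\).

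The main obstacle lies in the nonsymmetric character of \(K\): the antisymmetric part is only of size \(|y|^{1-n-s}\), which is not integrable near the origin, so it cannot be canceled pointwise but must be absorbed by symmetrization inside the operator (using that \(K_a(y,0)=-K_a(-y,0)\) up to the error controlled in~\eqref{IPSJLkear34FPA}), which is precisely the framework covered by the cited regularity results. A second technical point, already addressed by the hypothesis~\eqref{XM-skdadM-BIS}, is to control the contribution of \(V\) at infinity in the flattened equation: the global \(L^1\)-bound on \(v^+\) is exactly what is needed to absorb the nonlocal tail into the \(L^\infty\) source \(g\) and close the iteration uniformly in the \(L^\infty\) norm of \(v\).
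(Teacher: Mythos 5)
Your reduction to the flat setting and the sign observation needed to absorb the tail using only \eqref{XM-skdadM-BIS} are fine, but the heart of your argument has a genuine gap. The estimate you invoke --- a local maximum principle giving $\sup_{B'_{1/2}}V^+\le C\big(\|V^+\|_{L^1}+\dots\big)$ for pointwise subsolutions driven by a nonsymmetric kernel satisfying only \eqref{IPSJLkear34FPA} --- is not contained in the references you cite: \cite{MR2244602}, \cite{MR2995098} and \cite{KW} provide H\"older estimates for solutions that are already assumed bounded (this is exactly how they are used in Lemma~\ref{DERGED-Ge}), and none of them furnishes an $L^\infty$-from-$L^1$ bound in this generality (hypersurface setting, merely pointwise inequality, kernel symmetric only up to an error of order $|y|^{1-n-s}$, zero-order term). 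That bound is precisely the content of Lemma~\ref{VOL}, so quoting it as known begs the question; this is why the paper proves it from scratch, by sliding the barrier $\Psi(y)=\Theta\,(1-|y|^2)^{-n}$ from above until it touches $V$ at a point $q$, evaluating \eqref{XM-skdadM} at the touching point, and using the antisymmetry bound in \eqref{IPSJLkear34FPA} to control the error produced by subtracting the tangent plane of the barrier (for a nonsymmetric kernel this subtraction is not free); the choices $r=\alpha d$ and $\Theta\simeq 1+\int_{\mathcal{R}}v^+$ then close the estimate, following the scheme of \cite[Lemma~5.2]{MR3661864}.

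Even granting such a local maximum principle, your absorption step does not close as written. The right-hand side of your displayed inequality carries the term $C\|a_*\|_{L^\infty}\sup_{B'_{3/4}}V^+$ with a fixed constant that is in no way small (there is no smallness assumption on $a$), while the standard iteration-on-shrinking-balls lemma needs either a coefficient strictly below $1$ in front of the larger-ball supremum or a tunable small parameter; neither is available from the estimate you state. Moreover $\sup_{B'_{3/4}}V^+$ is not known to be finite a priori --- its finiteness is what the lemma establishes --- so an inequality with this quantity on the right-hand side is vacuous unless you first truncate, or rescale to balls so small that the zero-order term is dominated by the $s$-order operator; you do neither. In the paper's pointwise argument this issue never arises: the term $a(x_0)v(x_0)$ is evaluated only at the touching point, where $v$ equals $\Psi(q)+t$, and is reabsorbed into the terms $\Theta/(Cd^nr^{1+s})$ and $t/(Cr^{1+s})$ by taking $r$ a small multiple of $d$.
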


\begin{proof} The argument presented here extends to the geometric framework and for more general kernels a method of proof utilized in~\cite[Lemma~5.2]{MR3661864}. Specifically, 
recalling the notation~$B'_r:=B_r\cap\{x_n=0\}$,
as in~\eqref{02ojfe22-2} we consider a $C^2$-diffeomorphism~$\phi:{\mathcal{R}}\to B'_2$,
define~$V(y):=v(\phi^{-1}(y))$ and look at the corresponding integral equation driven by the kernel~$K_*$

We look at the function~$B'_1\ni y\mapsto\Psi(y):=\Theta\,(1-|y|^2)^{-n}$, with~$\Theta>0$ suitably large, and we slide the graph of~$\Psi$ by above in~$B_{1}'$ till we touch the graph of~$V$. That is, we find~$t\in\R$ such that~$V\le \Psi+t$ in~$B_1'$
with equality holding at some~$q\in B_1'$. 
We can assume that~$t\ge0$ (otherwise, $v\le\Psi+t\le\Psi$ and this would give the desired bound).

Let also~$d:=1-|q|$.
Let~$r\in\left(0,\frac{d}{2}\right)$ to be taken suitably small.
We consider the tangent plane of the barrier~$\Psi+t$ at~$q$, namely the linear function
$$ \ell(y):=\nabla\Psi(q)\cdot(y-q).
$$
It would be desirable to freely subtract tangent planes in the operators, but this is not possible in our framework, due to the lack of symmetry of the kernel, therefore we need to take care of an additional remainder. Namely, 
\begin{eqnarray*}&&\left|
2\int_{B'_r(q)}\ell(y)\,K_*(y,q)\,d{\mathcal{H}}^{n-1}_y\right|
=\left|2\nabla\Psi(q)\cdot\int_{B_r'(q)}(y-q)\,K_*(y,q)\,d{\mathcal{H}}^{n-1}_y\right|\\&&\qquad=
\left|\nabla\Psi(q)\cdot\int_{B_r'}z\big(K_*(q+z,q)-K_*(q-z,q)\big)
\,d{\mathcal{H}}^{n-1}_z\right|
.
\end{eqnarray*}

Now, as observed in~\eqref{SkPALmzyHS20}, it follows from~\eqref{IPSJLkear34FPA} that
\begin{eqnarray*}
|K_*(q+z,q)-K_*(q-z,q)|\le C|z|^{1-n-s}
\end{eqnarray*}
As a result,
\begin{equation*}\begin{split}&\left|
2\int_{B'_r(q)}\ell(y)\,K_*(y,q)\,d{\mathcal{H}}^{n-1}_y\right|\le
|\nabla\Psi(q)| \,\int_{B_r'}|z|\,\big|K_*(q+z,q)-K_*(q-z,q)\big|
\,d{\mathcal{H}}^{n-1}_z
\\&\qquad\qquad\le
C|\nabla\Psi(q)|\,\int_{B_r'}|z|^{2-n-s}\,d{\mathcal{H}}^{n-1}_z
\le
C|\nabla\Psi(q)|\,r^{1-s}.
\end{split}\end{equation*}

Consequently, if~$q:=\phi(p)$ and~${\mathcal{R}}_r(p):=\phi^{-1}(B_r'(q))$,
\begin{equation}\label{0qwdj3toftwta}\begin{split}&
\int_{{\mathcal{R}}_r(p)}
\big( v(p)-v(x)\big)\,K(x,p)
\,d{\mathcal{H}}^{n-1}_x\\&\qquad=
\int_{B_r'(q)}
\big( V(q)-V(y)+\ell(y)\big)\,K_*(y,q)
\,d{\mathcal{H}}^{n-1}_y-
\int_{B'_r(q)}
\ell(y)\,K_*(y,q)
\,d{\mathcal{H}}^{n-1}_y\\&\qquad\ge
-\int_{B'_r(q)}
\big( V(y)-V(q)-\ell(y)\big)\,K_*(y,q)
\,d{\mathcal{H}}^{n-1}_x-C|\nabla\Psi(q)|\,r^{1-s}.
\end{split}
\end{equation}
Now we use the notation of positive and negative parts of a function, namely~$g=g_+-g_-$, where~$g_+:=\max\{g,0\}$ and~$g_-:=\max\{-g,0\}$, to see that, if~$y\in B'_r(q)$,
$$ |y|\le |q|+|y-q|\le 1-d+r\le 1-\frac{d}{2}$$
and, as a consequence,
\begin{eqnarray*}
\big( V(y)-V(q)-\ell(y)\big)_+ &\le&\max\{\Psi(y)-\Psi(q)-\ell(y),0 \}
\\&\le&\frac{C\Theta|y-q|^2}{d^{n+2}}.
\end{eqnarray*}
For this reason,
\begin{equation}\label{hidwaiuvfcvvA}\begin{split}&
\int_{B'_r(q)}\big( V(y)-V(q)-\ell(y)\big)_+\,K_*(y,q)
\,d{\mathcal{H}}^{n-1}_y\le\frac{ C\Theta}{d^{n+2}}\int_{B'_r(q)}
|y-q|^2\,K_*(y,q)
\,d{\mathcal{H}}^{n-1}_y\\&\qquad\qquad\qquad
\le\frac{ C\Theta}{d^{n+2}}\int_0^r \frac{d\rho}{\rho^s}=\frac{ C\Theta r^{1-s}}{d^{n+2}}.\end{split}
\end{equation}

Moreover,
\begin{eqnarray*}
\big( V(y)-V(q)-\ell(y)\big)_-&\ge& V(q)-V(y)+\ell(y)\\&\ge& \Psi(q)+t-V^+(y)+\ell(y)
\end{eqnarray*}
and therefore
\begin{equation*}\begin{split}&
\int_{B'_r(q)}
\big( V(y)-V(q)-\ell(y)\big)_-\,K_*(y,q)
\,d{\mathcal{H}}^{n-1}_y\\&\qquad\ge\frac1C\int_{B_r'(q)}
\big( V(y)-V(q)-\ell(y)\big)_-\,\frac{d{\mathcal{H}}^{n-1}_y}{|y-q|^{n+s}}\\&\qquad\ge
\frac1{Cr^{n+s}}\int_{B_r'(q)}
\big( V(y)-V(q)-\ell(y)\big)_-\,d{\mathcal{H}}^{n-1}_y
\\&\qquad\ge\frac1{Cr^{n+s}}
\int_{B_r'(q)}
\big(\Psi(q)+t-V^+(y)+\ell(y)\big)\,d{\mathcal{H}}^{n-1}_y\\&\qquad\ge\frac1{Cr^{n+s}}\left(
\big(\Psi(q)+t\big)r^{n-1}
-C\int_{B_r'(q)}V^+(y)\,d{\mathcal{H}}^{n-1}_y\right)-\frac{C|\nabla\Psi(q)|}{r^{s}}
\\&\qquad\ge\frac{\Psi(q)+t}{Cr^{1+s}}
-\frac{C}{r^{n+s}}\int_{{\mathcal{R}}_r(p)}v^+(x)\,d{\mathcal{H}}^{n-1}_x-\frac{C|\nabla\Psi(q)|}{r^{s}}
.\end{split}
\end{equation*}

Note that
$$\Psi(q)=\Theta\,(1-|q|^2)^{-n}=\Theta\,(1+|q|)^{-n}(1-|q|)^{-n}\ge 2^{-n}\Theta d^{-n}$$
and
$$ |\nabla\Psi(q)|=\frac{2n\Theta|q|}{(1-|q|^2)^{n+1}}\le \frac{2n\Theta}{d^{n+1}},$$
giving that
\begin{equation*}\begin{split}&
\int_{B_r'(q)}
\big( V(y)-V(q)-\ell(y)\big)_-\,K_*(y,q)
\,d{\mathcal{H}}^{n-1}_y\\&\qquad\ge\frac{\Theta}{Cd^n r^{1+s}}+\frac{t}{Cr^{1+s}}
-\frac{C}{r^{n+s}}\int_{{\mathcal{R}}_r(p)}v^+(x)\,d{\mathcal{H}}^{n-1}_x-\frac{C\Theta}{d^{n+1}r^{s}}
.\end{split}\end{equation*}
Notice that
\begin{eqnarray*}
\frac{\Theta}{Cd^n r^{1+s}}-\frac{C\Theta}{d^{n+1}r^{s}}
=\frac{\Theta}{Cd^{n}r^{s}}\left(\frac1{r}-\frac{C^2}{d}\right)>
\frac{\Theta}{Cd^{n}r^{1+s}}
\end{eqnarray*}
up to renaming~$C$,
if~$r$ is smaller than a small constant times~$d$,
and therefore
\begin{equation*}\begin{split}&
\int_{B_r'(q)}
\big( V(y)-V(q)-\ell(y)\big)_-\,K_*(y,q)
\,d{\mathcal{H}}^{n-1}_y\\&\qquad\ge\frac{\Theta}{Cd^n r^{1+s}}+\frac{t}{Cr^{1+s}}
-\frac{C}{r^{n+s}}\int_{{\mathcal{R}}_r(p)}v^+(x)\,d{\mathcal{H}}^{n-1}_x
.\end{split}\end{equation*}
Combining this estimate and~\eqref{hidwaiuvfcvvA}, we infer that
\begin{eqnarray*}&&
\int_{B_r'(q)}\big( V(y)-V(q)-\ell(y)\big)\,K_*(y,q)
\,d{\mathcal{H}}^{n-1}_y\\&&\qquad\le -
\frac{\Theta}{Cd^n r^{1+s}}-\frac{t}{Cr^{1+s}}
+\frac{C}{r^{n+s}}\int_{{\mathcal{R}}_r(p)}v^+(x)\,d{\mathcal{H}}^{n-1}_x
+\frac{C\Theta r^{1-s}}{d^{n+2}}
.\end{eqnarray*}
{F}rom this and~\eqref{0qwdj3toftwta} we conclude that
\begin{equation}\label{XM-skdadM2}\begin{split}&
\int_{{\mathcal{R}}_r(p)}
\big( v(p)-v(x)\big)\,K(x,p)
\,d{\mathcal{H}}^{n-1}_x\\&\qquad\ge
\frac{\Theta}{Cd^n r^{1+s}}
+\frac{t}{Cr^{1+s}}-
\frac{C}{r^{n+s}}\int_{{\mathcal{R}}_r(p)}v^+(x)\,d{\mathcal{H}}^{n-1}_x
-\frac{C\Theta r^{1-s}}{d^{n+2}},
\end{split}
\end{equation}
where, as customary, we have renamed constants line after line.

Also,
$$ v (p)=V(q)= \Psi(q)+t\ge\Psi(q)\ge0$$
and therefore
\begin{eqnarray*}&&
\int_{{\mathcal{R}}\setminus{\mathcal{R}}_r(p)}
\big( v(p)-v(x)\big)\,K(x,p)\,d{\mathcal{H}}^{n-1}_x\ge-C
\int_{{\mathcal{R}}\setminus{\mathcal{R}}_r(p)} v^+(x)\,\frac{d{\mathcal{H}}^{n-1}_x}{|x-p|^{n+s}}\\&&\qquad
\ge-\frac{C}{r^{n+s}}
\int_{{\mathcal{R}}\setminus{\mathcal{R}}_r(p)} v^+(x)\,d{\mathcal{H}}^{n-1}_x\ge-\frac{C}{r^{n+s}}
\int_{{\mathcal{R}}} v^+(x)\,d{\mathcal{H}}^{n-1}_x.
\end{eqnarray*}

This, \eqref{XM-skdadM} and~\eqref{XM-skdadM2} yield that, if $\Theta$ is large as specified above,
\begin{eqnarray*}M&\ge&
\int_{{{\mathcal{R}}}}
\big( v(p)-v(x)\big)\,K(x,p)
\,d{\mathcal{H}}^{n-1}_x-a(p)v(p)\\&\ge&\frac{\Theta}{Cd^n r^{1+s}}+\frac{t}{Cr^{1+s}}-
\frac{C}{r^{n+s}}\int_{{\mathcal{R}}}v^+(x)\,d{\mathcal{H}}^{n-1}_x-\frac{C\Theta r^{1-s}}{d^{n+2}}-a(p)(\Psi(q)+t)\\&\ge&\frac{\Theta}{Cd^n r^{1+s}}+\frac{t}{Cr^{1+s}}
-
\frac{C}{r^{n+s}}\int_{{\mathcal{R}}}v^+(x)\,d{\mathcal{H}}^{n-1}_x-\frac{C\Theta r^{1-s}}{d^{n+2}}-a(p)\Psi(q)\\&\ge&
\frac{\Theta}{Cd^n r^{1+s}}+\frac{t}{Cr^{1+s}}
-\frac{C}{r^{n+s}}\int_{{\mathcal{R}}}v^+(x)\,d{\mathcal{H}}^{n-1}_x-\frac{C\Theta r^{1-s}}{d^{n+2}}-\frac{C\Theta}{d^n}.
\end{eqnarray*}
Notice that for~$r$ small enough, the latter term can be reabsorbed.
In particular, taking~$r=\alpha d$, for a small~$\alpha\in(0,1)$, we find that
\begin{eqnarray*}M&\ge&
\frac{\Theta}{Cd^n r^{1+s}}+\frac{t}{Cr^{1+s}}
-\frac{C}{r^{n+s}}\int_{{\mathcal{R}}}v^+(x)\,d{\mathcal{H}}^{n-1}_x-\frac{C\Theta r^{1-s}}{d^{n+2}}\\
&=&\frac{\Theta}{C \alpha^{1+s} d^{n+1+s}}+\frac{t}{C\alpha^{1+s} d^{1+s}}
-\frac{C}{\alpha^{n+s} d^{n+s}}\int_{{\mathcal{R}}}v^+(x)\,d{\mathcal{H}}^{n-1}_x-\frac{C\Theta \alpha^{1-s}}{d^{n+1+s}}
\\&\ge& \frac{t}{C\alpha^{1+s} d^{1+s}}
+\frac{\Theta}{\alpha^{1+s} d^{n+1+s}}\left[\frac1C
-\frac{Cd}{\alpha^{n-1} \Theta}\int_{{\mathcal{R}}}v^+(x)\,d{\mathcal{H}}^{n-1}_x-C\alpha^{2}\right].
\end{eqnarray*}
The smallness of~$\alpha$ having played its role, we omit it from the notation from now on, absorbing it into the constants. In particular, if
$$ \Theta:= C \int_{{\mathcal{R}}}v^+(x)\,d{\mathcal{H}}^{n-1}_x,$$
with~$C$ large enough, we have that
\begin{eqnarray*}M\ge
\frac{t}{C d^{1+s}}
+\frac{\Theta}{C d^{n+1+s}}\ge \frac{t}{C d^{1+s}}.\end{eqnarray*}
All in all, we have found that, if~$\xi\in{\mathcal{R}}\cap B_{1/2}$,
\begin{eqnarray*}&&
v(\xi)=V(\phi(\xi))\le \Psi(\phi(\xi))+t\le  
C\big(M+\Theta\big)\\&&\qquad \le
C\left(M+\int_{{\mathcal{R}}}v^+(x)\,d{\mathcal{H}}^{n-1}_x\right),
\end{eqnarray*}
up to renaming~$C$.
\end{proof}

\section{An integral bound in a geometric setting}\label{INTEBO}

In retrospect, the H\"older regularity theory established in Lemma~\ref{DERGED-Ge}
relied on a specific hypothesis, namely an~$L^\infty$ bound, which in turn was reduced to an integral bound by means of Lemma~\ref{VOL}.

However, in our setting, integral bounds do not come completely for free. As already mentioned,
the complication arises from the fact that we need to apply this regularity theory to a normalized parameterization between minimal sheets: thus, in view of the possible divergence of normal parameterizations at singular points, $L^\infty$ bounds, and even $L^1$ bounds, may be nontrivial. The next result fills this gap and ensures an~$L^1$ bound, which, in our application, will lead to an~$L^\infty$ bound
(via Lemma~\ref{VOL}) and thus to a H\"older estimate (owing to Lemma~\ref{DERGED-Ge}), from which one will deduce suitable compactness properties of the normalized distance of minimal sheets.

\begin{lemma}\label{ZX-ipfe-jJSL}
Let~${\mathcal{R}}$ be a portion of a $C^2$ hypersurface
in~$B_2$ that is $C^2$-diffeomorphic to~$B_2\cap\{x_n=0\}$.
Let~$\nu$ be the unit normal vector field of~${\mathcal{R}}$.

Let~$K:{\mathcal{R}}\times{\mathcal{R}}\to[0,+\infty]$ be such that
\begin{equation}\label{ojfe-32659867}\begin{split}&
{\mbox{for all~$x\ne x_0$, we have that }}\;
\frac{1}{C|x-x_0|^{n+s}} \leq K(x,x_0)\le \frac{C}{|x-x_0|^{n+s}},\\ &{\mbox{and, for all~$y\in B_\delta\setminus\{0\}$, }}
|K(y_{x_0}^+,x_0)-K(y_{x_0}^-,x_0)|\le C|y|^{1-n-s},
\end{split}\end{equation}
for some (small) $\delta\in(0,1)$ and some~$C\ge1$.

Assume that, for all~$x_0\in{{\mathcal{R}}}\cap B_1$, the function~$v$ satisfies
\begin{equation}\label{Nkpa-ipjfw40jjo9-RFmra}\begin{split}&
\int_{\mathcal{R}}
\big( v(x_0)-v(x)\big)\,K(x,x_0)
\,d{\mathcal{H}}^{n-1}_x-a(x_0)v(x_0)\ge -M_0,\end{split}\end{equation}
for some~$M_0>0$ and~$a:\R^n\to\R$, with~$a$ bounded from below.

Assume also that~$v$ is nonnegative and that~$v(x_\star)=1$, for some~$x_\star\in{{\mathcal{R}}}\cap B_1$.

Then, $$
\int_{{\mathcal{R}}}
v(x)\,d{\mathcal{H}}^{n-1}_x\le C_0 ,$$
with~$C_0>0$ depending only on~$n$, $s$, $M_0$, $\|a_-\|_{L^\infty(\R^n)}$,
the regularity parameters of~${\mathcal{R}}$ and the structural constant~$C$ of the kernel~$K$
in~\eqref{ojfe-32659867}.
\end{lemma}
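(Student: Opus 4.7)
The idea is to exploit the equation at the normalization point $x_\star$ in combination with the observation that, since $\mathcal{R}\subset B_2$ and $x_\star\in\mathcal{R}\cap B_1$, every $x\in\mathcal{R}$ satisfies $|x-x_\star|\le 4$, so the lower bound in~\eqref{ojfe-32659867} forces the \emph{uniform} estimate $K(x,x_\star)\ge c_0>0$ throughout $\mathcal{R}$, with $c_0$ a structural constant. This is what will turn any bound on $\int_{\mathcal{R}} v\,K(\cdot,x_\star)\,d\mathcal{H}^{n-1}$ (away from $x_\star$) into the desired bound on $\int v$.

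Plugging $x_0=x_\star$ into~\eqref{Nkpa-ipjfw40jjo9-RFmra} and using $v(x_\star)=1$ gives, in principal-value sense,
\[
\int_{\mathcal{R}}\bigl(1-v(x)\bigr)\,K(x,x_\star)\,d\mathcal{H}^{n-1}_x \in [-M_0,\,\|a\|_{L^\infty}+M_0].
\]
Fix $\rho\in(0,1/4)$ and split this integral between $\mathcal{R}\cap B_\rho(x_\star)$ (the near part, a principal value) and $\mathcal{R}\setminus B_\rho(x_\star)$ (the far part, absolutely convergent). Rearranging the far piece and using $K(\cdot,x_\star)\ge c_0$ leads to
\[
c_0\int_{\mathcal{R}\setminus B_\rho(x_\star)} v(x)\,d\mathcal{H}^{n-1}_x \;\le\; \int_{\mathcal{R}\setminus B_\rho(x_\star)} K(x,x_\star)\,d\mathcal{H}^{n-1}_x \;+\; \|a\|_{L^\infty}+M_0 \;+\; \bigl|\text{PV}_{\text{near}}\bigr|,
\]
so the $L^1$ bound on $v$ away from $x_\star$ reduces to bounding the near principal value by a structural constant.

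To control the near PV, I would use the almost-symmetry condition in~\eqref{ojfe-32659867}: splitting $K$ as its symmetric and antisymmetric parts with respect to the reflection $y\mapsto -y$ about the tangent plane at $x_\star$, the antisymmetric piece yields an absolutely integrable tail (since $\int_{B_\rho}|y|^{1-n-s}\,d\mathcal{H}^{n-1}_y<+\infty$), while the symmetric piece is controlled by a sliding-barrier argument in the spirit of the proof of Lemma~\ref{VOL}: sliding a barrier of the form $\Theta\,(1-|y|^2)^{-n}$ from above near $x_\star$, the touching point, together with $v(x_\star)=1$ and $v\ge 0$, simultaneously produces a pointwise upper bound on $v$ in a small neighborhood of $x_\star$ \emph{and} bounds the corresponding contribution to the PV. This local pointwise bound on $v$ also settles $\int_{\mathcal{R}\cap B_\rho(x_\star)} v$, because $\mathcal{H}^{n-1}(\mathcal{R}\cap B_\rho(x_\star))\le C\rho^{n-1}$, and combining with the $L^1$ bound away from $x_\star$ completes the argument.

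The main obstacle, and the technical heart of the proof, is the apparent circular dependence between the $L^1$ bound sought here and the $L^\infty$ bound produced by Lemma~\ref{VOL}, which presupposes an $L^1$ control on $v^+$. The remedy is to execute the sliding-barrier step \emph{locally} and \emph{directly} near $x_\star$, using the normalization $v(x_\star)=1$ (rather than a global $L^1$ bound on $v$) to absorb the residual terms; this is precisely the structural information that is not exploited in Lemma~\ref{VOL} itself.
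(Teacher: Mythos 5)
There is a genuine gap, and it sits exactly where you locate ``the technical heart''. First, your reduction is circular: writing the equation \eqref{Nkpa-ipjfw40jjo9-RFmra} at $x_0=x_\star$ as near part plus far part, the quantity $\mathrm{PV}\int_{\mathcal{R}\cap B_\rho(x_\star)}(1-v)K(\cdot,x_\star)\,d\mathcal{H}^{n-1}$ differs from $\int_{\mathcal{R}\setminus B_\rho(x_\star)}v\,K(\cdot,x_\star)\,d\mathcal{H}^{n-1}$ only by structurally bounded terms, so ``bounding the near principal value by a structural constant'' is \emph{equivalent} to the $L^1$ tail bound you are trying to prove; nothing has been reduced. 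Second, the tool you propose cannot supply the missing bound. The slide-from-above argument of Lemma~\ref{VOL} needs the integral control $\int_{\mathcal{R}}v^+\le M'$ as an input (the far field enters there with an unfavourable sign), which is exactly what is unavailable here; and even granting a local pointwise \emph{upper} bound on $v$ near $x_\star$, that controls the wrong sign of the PV: the contributions that could make $\mathrm{PV}\int(1-v)K$ blow up come from $v$ dipping \emph{below} $v(x_\star)=1$ near $x_\star$ (and $0\le v$, $1-v\le 1$ is useless against a kernel of order $1+s>1$ on the $(n-1)$-dimensional surface), so what is needed is a \emph{lower} comparison touching at or near $x_\star$, not an upper one. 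Finally, the parenthetical justification for the antisymmetric part is dimensionally wrong: on the hypersurface, $\int_{B_\rho}|y|^{1-n-s}\,d\mathcal{H}^{n-1}_y\approx\int_0^\rho r^{-1-s}\,dr=+\infty$; the defect $|y|^{1-n-s}$ in \eqref{ojfe-32659867} only helps when paired with an increment of size $O(|y|)$ coming from a smooth comparison function, not with $1-v$, which has no smallness.

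The paper closes precisely this gap by touching from \emph{below}: after straightening $\mathcal{R}$, one slides the parabola $-M|y-y_\star|^2$ upward until it touches $V=v\circ\phi^{-1}$ at a point $y_\sharp$; the normalization $v(x_\star)=1$ together with $v\ge0$ forces $|y_\sharp-y_\star|\le M^{-1/2}$ and $V(y_\sharp)\le 1$. At such a point one has $V-P\ge0$ with equality at $y_\sharp$, so the singular near-field of $\int\big(V(y_\sharp)-V(y)\big)K_*\,d\mathcal{H}^{n-1}_y$ is dominated by that of the smooth parabola (this is where the approximate symmetry of the kernel is actually used, against the linear part of $P$), and the equation \eqref{Nkpa-ipjfw40jjo9-RFmra} applied at the touching point then bounds $\int (V-P)\,K_*$ from above; the kernel lower bound in \eqref{ojfe-32659867} converts this into the $L^1$ bound near $y_\sharp$, and a second use of the equation at $y_\sharp$ controls the far field. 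If you replace your upper barrier at $x_\star$ by this lower touching anchored at $x_\star$, your outline essentially becomes the paper's proof.
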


\begin{proof} We straighten~${\mathcal{R}}$ by a diffeomorphism~$\phi:{\mathcal{R}}\to B'_2:=B_2\cap\{x_n=0\}$
and set~$V(y):=v(\phi^{-1}(y))$.
We let~$y_\star:=\phi(x_\star)$ and
slide the parabola~$-M|y-y_\star|^2$
from below till we touch the graph of~$V$ at some point~$y_\sharp$. 
Notice that
$$ M|y_\sharp-y_\star|^2\le V(y_\sharp)+M|y_\sharp-y_\star|^2\le V(y_\star)=v(x_\star)=1.$$
Thus, the parameter~$M>0$ is fixed (once and for all) sufficiently large such that the touching point~$y_\sharp$
lies in~$B_{3/2}'$.

We remark that
$$ P(y):=V(y_\sharp)+M|y_\sharp-y_\star|^2-M|y-y_\star|^2\le V(y)$$
and in particular~$P(y_\sharp)=V(y_\sharp)\ge0$.

Hence, if~$\widetilde V:=V-P$, we have that~$\widetilde V(y_\sharp)=0$ and thus, recalling the notation in~\eqref{02ojfe22-2}
and setting~$x_\sharp:=\phi^{-1}(y_\sharp)$,
\begin{eqnarray*}&&
\int_{B_2'} \widetilde V(y)\,K_*(y,y_\sharp)\,d{\mathcal{H}}^{n-1}_y=
\int_{B_2'} \big(\widetilde V(y)-\widetilde V(y_\sharp)\big)\,K_*(y,y_\sharp)\,d{\mathcal{H}}^{n-1}_y\\
&&\qquad=\int_{B_2'} \big(V(y)-V(y_\sharp)\big)\,K_*(y,y_\sharp)\,d{\mathcal{H}}^{n-1}_y
-\int_{B_2'}\big(P(y)-  P(y_\sharp)\big)\,K_*(y,y_\sharp)\,d{\mathcal{H}}^{n-1}_y\\&&\qquad\le\int_{B_2'} \big(V(y)-V(y_\sharp)\big)\,K_*(y,y_\sharp)\,d{\mathcal{H}}^{n-1}_y
+C_0\\&&\qquad=\int_{\mathcal{R}} \big(v(x)-v(x_\sharp)\big)\,K(x,x_\sharp)\,d{\mathcal{H}}^{n-1}_x
+C_0,
\end{eqnarray*}
with~$C_0$ bounded in terms of~$n$, $s$,
the regularity parameters of~${\mathcal{R}}$ and the structural constant~$C$ of the kernel~$K$
in~\eqref{ojfe-32659867}.

{F}rom this observation and~\eqref{Nkpa-ipjfw40jjo9-RFmra}, we infer that
\begin{eqnarray*}
\int_{B_2'} \widetilde V(y)\,K_*(y,y_\sharp)\,d{\mathcal{H}}^{n-1}_y \le M_0-
a(x_\sharp)v(x_\sharp) +C_0\le 
M_0+\|a_-\|_{L^\infty(\R^n)}+C_0\le
C_0,
\end{eqnarray*}
with~$C_0$ variable from line to line and now depending also on~$M_0$ and~$\|a_-\|_{L^\infty(\R^n)}$.

Since~$\widetilde V\ge0$, we thereby conclude that
\begin{eqnarray*}
\int_{B_{1/100}'(y_\sharp)} \widetilde V(y)\,K_*(y,y_\sharp)\,d{\mathcal{H}}^{n-1}_y\le C_0.
\end{eqnarray*}
We point out that, if~$y\in B_{1/100}'(y_\sharp)$,
\begin{eqnarray*}
K_*(y,y_\sharp)\ge \frac1{C|y-y_\sharp|^{n+s}}\ge \frac{100^{n+s}}{C},
\end{eqnarray*}
and thus
$$\int_{B_{1/100}'(y_\sharp)} \widetilde V(y)\, d{\mathcal{H}}^{n-1}_y
\le \frac{C}{100^{n+s}}
\int_{B_{1/100}'(y_\sharp)} \widetilde V(y)\,K_*(y,y_\sharp)\,d{\mathcal{H}}^{n-1}_y\le C_0.
$$
As a result, up to keeping renaming~$C_0$,
\begin{equation}\label{A7-ipt34g0jf-ew9t4ut9-u43b}\begin{split}&
\int_{B_{1/100}'(y_\sharp)} V(y)\, d{\mathcal{H}}^{n-1}_y\le 
\int_{B_{1/100}'(y_\sharp)} P(y)\,d{\mathcal{H}}^{n-1}_y+ C_0 \le C_0,
\end{split}\end{equation}
where we have also used the fact that~$P(y)\le V(y_\sharp) +M|y_\sharp-y_\star|^2\le1$.

Furthermore,
\begin{eqnarray*}&&
\int_{B_{1/100}'(y_\sharp)}\big( V(y_\sharp)-V(y)\big)\,K_*(y,y_\sharp)\,d{\mathcal{H}}^{n-1}_y\\&&\qquad\le
\int_{B_{1/100}'(y_\sharp)}\big( P(y_\sharp)-P(y)\big)\,K_*(y,y_\sharp)\,d{\mathcal{H}}^{n-1}_y\le C_0
\end{eqnarray*}and therefore one deduces from~\eqref{Nkpa-ipjfw40jjo9-RFmra}
that \begin{eqnarray*}&&
C_0+\int_{{B'_2\setminus B_{1/100}'(y_\sharp)}}
\big( V(y_\sharp)-V(y)\big)\,K_*(y,y_\sharp)
\,d{\mathcal{H}}^{n-1}_y
\\&&\qquad\ge
\int_{B_2'}
\big( V(y_\sharp)-V(y)\big)\,K_*(y,y_\sharp)
\,d{\mathcal{H}}^{n-1}_y
\\&&\qquad=
\int_{\mathcal{R}}
\big( v(x_\sharp)-v(x)\big)\,K(x,x_\sharp)
\,d{\mathcal{H}}^{n-1}_x
\\&&\qquad\ge a(x_\sharp)v(x_\sharp)
-M_0\\&&\qquad\ge -\|a_-\|_{L^\infty(\R^n)}-M_0
\\&&\qquad\ge -C_0
.\end{eqnarray*}
This gives that
\begin{eqnarray*}
&& \int_{B_2'\setminus B_{1/100}'(y_\sharp)}
V(y)\,K_*(y,y_\sharp)\,d{\mathcal{H}}^{n-1}_y\\&&\qquad\le
V(y_\sharp)\int_{B_2'\setminus B_{1/100}'(y_\sharp)}\,K_*(y,y_\sharp)
\,d{\mathcal{H}}^{n-1}_y+C_0\\&&\qquad= C_0\big(V(y_\sharp)+1\big)\\&&\qquad\le C_0
\big(V(y_\star)-M|y_\sharp-y_\star|^2+1\big)\\&&\qquad\le C_0,
\end{eqnarray*}
up to keeping renaming~$C_0$,
and consequently
\begin{equation*}
\int_{B_2'\setminus B_{1/100}'(y_\sharp)}
V(y)\,d{\mathcal{H}}^{n-1}_y\le C_0.\end{equation*}
This and~\eqref{A7-ipt34g0jf-ew9t4ut9-u43b} yield the desired result.
\end{proof}

We now present a useful variant\footnote{Actually, not only the proofs of Lemmata~\ref{ZX-ipfe-jJSL}
and~\ref{ZX-ipfe-jJSL-2-2} are similar, but one could state just one single, albeit more complicated,
result to condensate Lemmata~\ref{ZX-ipfe-jJSL}
and~\ref{ZX-ipfe-jJSL-2-2} into a single statement. For the sake of simplicity, however, we prefer to keep the two statements separate.} of Lemma~\ref{ZX-ipfe-jJSL}.
Its utility in our context is that the geometric equation that we deal with
will present an integral term coming from the nonlocal mean curvature of
two minimal sets that we cannot reabsorb into ``smooth'' objects
(due to the fact that the domain of integration is a ``bad set'', say~${\mathcal{B}}$, containing far away points and points
close to the singular set, for which any regularity information is missing). The next result however will provide
a uniform control of such additional term.

\begin{lemma}\label{ZX-ipfe-jJSL-2-2}
Let~${\mathcal{R}}$ be a portion of a $C^2$ hypersurface
in~$B_2$ that is $C^2$-diffeomorphic to~$B_2\cap\{x_n=0\}$.
Let~$\nu$ be the unit normal vector field of~${\mathcal{R}}$.

Let~$K:{\mathcal{R}}\times{\mathcal{R}}\to[0,+\infty]$ be such that
\begin{equation}\label{ojfe-32659867-mak}\begin{split}&
{\mbox{for all~$x\ne x_0$, we have that }}\;
\frac{1}{C|x-x_0|^{n+s}} \leq K(x,x_0)\le \frac{C}{|x-x_0|^{n+s}},\\ &{\mbox{and, for all~$y\in B_\delta\setminus\{0\}$, }}
|K(y_{x_0}^+,x_0)-K(y_{x_0}^-,x_0)|\le C|y|^{1-n-s},
\end{split}\end{equation}
for some (small) $\delta\in(0,1)$ and some~$C\ge1$.

Assume that, for all~$\widetilde x\in{{\mathcal{R}}}\cap B_{3/2}$, a nonnegative function~$w$ satisfies
\begin{equation}\label{9iuyhgvcgyu8uygfcfy7ytgfP-0oigj-2-223t}\begin{split}&
\int_{\mathcal{R}}
\big( w(\widetilde x)-w(x)\big)\,K(x,\widetilde x)
\,d{\mathcal{H}}^{n-1}_x
-\int_{{\mathcal{B}}}\frac{\Phi(X)}{|X-\widetilde X|^{n+s}}\,dX\ge-\mu
,\end{split}\end{equation}
where~${\mathcal{B}}$ is some measurable set of~$\R^n\setminus{\mathcal{R}}$, $\Phi\in L^\infty(\R^n,[0,1])$,
$\mu\ge0$, and~$\widetilde X:=\widetilde x+w(\widetilde x)\nu(\widetilde x)$.

Assume that there exists a point~$x_0$ in~${\mathcal{R}}\cap B_1$ whose distance from~${\mathcal{B}}$ is bounded from below by some~$r_0>\widetilde C\|w\|_{L^\infty({{\mathcal{R}}}\cap B_2)}$, with~$\widetilde C>1$,
depending only on
the regularity parameters of~${\mathcal{R}}$.

Then, there exists~$C_0>0$, depending only on~$n$, $s$, $r_0$,
the regularity parameters of~${\mathcal{R}}$ and the structural constant~$C$ of the kernel
in~\eqref{ojfe-32659867-mak}, such that
\begin{equation}\label{TBGSHJDK-23eortkg}
\int_{\mathcal{B}}\frac{\Phi(X)}{|X-X_0|^{n+s}}\,dX\le C_0\,\big(
\|w\|_{L^\infty({{\mathcal{R}}}\cap B_2)} +\mu\big),\end{equation}
where~$X_0:=x_0+w(x_0)\nu(x_0)$.
\end{lemma}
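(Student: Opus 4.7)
The plan is to adapt the sliding-barrier technique of Lemma~\ref{ZX-ipfe-jJSL}, using a paraboloid whose amplitude is rescaled by $\epsilon:=\|w\|_{L^\infty({\mathcal R}\cap B_2)}$ and which is centred at~$x_0$ rather than at an arbitrary point. After straightening ${\mathcal R}$ by the $C^2$-diffeomorphism~$\phi\colon{\mathcal R}\to B'_2$ used in Lemma~\ref{DERGED-Ge}, I set $W:=w\circ\phi^{-1}$, $y_0:=\phi(x_0)$, and recall the straightened kernel~$K_*$ from~\eqref{02ojfe22-2}. I then slide the paraboloid
$$ P(y):=t-M\epsilon\,|y-y_0|^2 $$
upward in~$t$ until it first touches~$W$ from below at some point $y_\sharp\in B'_2$.

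Since $W\ge 0$, the touching level satisfies $t\le W(y_0)\le\epsilon$, so $M\epsilon\,|y_\sharp-y_0|^2\le t-W(y_\sharp)\le\epsilon$, which gives $|y_\sharp-y_0|\le 1/\sqrt M$. Choosing $M:=C_\star/r_0^2$ with $C_\star$ a large structural constant, I ensure both that $\widetilde x_\sharp:=\phi^{-1}(y_\sharp)$ lies in~${\mathcal R}\cap B_{3/2}$ (so~\eqref{9iuyhgvcgyu8uygfcfy7ytgfP-0oigj-2-223t} applies there) and that $|\widetilde x_\sharp-x_0|\le r_0/10$. Testing~\eqref{9iuyhgvcgyu8uygfcfy7ytgfP-0oigj-2-223t} at $\widetilde x_\sharp$ and using the touching relation $W(y_\sharp)-W(y)\le P(y_\sharp)-P(y)=M\epsilon(|z|^2+2z\cdot v)$, with $z:=y-y_\sharp$ and $v:=y_\sharp-y_0$, yields
$$ \int_{\mathcal B}\frac{\Phi(X)}{|X-\widetilde X_\sharp|^{n+s}}\,dX\le \mu +M\epsilon\int_{B'_2-y_\sharp}(|z|^2+2z\cdot v)\,K_*(y_\sharp+z,y_\sharp)\,d{\mathcal H}^{n-1}_z, $$
where $\widetilde X_\sharp:=\widetilde x_\sharp+w(\widetilde x_\sharp)\nu(\widetilde x_\sharp)$.

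The quadratic contribution $M\epsilon\int|z|^2 K_*\,d{\mathcal H}^{n-1}_z$ is bounded by $CM\epsilon$ since $|z|^2 K_*\le C|z|^{2-n-s}$ is integrable near~$0$ (using $s<1$) and the kernel is uniformly bounded away from~$0$. The linear contribution $2M\epsilon\int z\cdot v\,K_*\,d{\mathcal H}^{n-1}_z$ is the delicate piece: here $K_*$ is only approximately symmetric while $z\cdot v$ is odd in~$z$, so I split $K_*(y_\sharp+z,y_\sharp)$ into its even and odd parts in~$z$; the even part integrates against the odd function~$z$ to zero on the symmetric ball, while the odd part is controlled by the second estimate in~\eqref{ojfe-32659867-mak}, exactly as in~\eqref{SkPALmzyHS20}. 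This produces a contribution bounded by $CM\epsilon|v|\int|z|^{2-n-s}\,d{\mathcal H}^{n-1}_z\le CM\epsilon$, harmless since $|v|\le 1/\sqrt M\le 1$. Combined, the double integral is at most $C_0\epsilon$ with $C_0$ depending only on~$r_0$, $n$, $s$, the kernel constant~$C$ and the regularity of~${\mathcal R}$.

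The final step is to replace $\widetilde X_\sharp$ by~$X_0$ in the kernel. A triangle-inequality bookkeeping yields
$$ |\widetilde X_\sharp-X_0|\le|\widetilde x_\sharp-x_0|+|w(\widetilde x_\sharp)\nu(\widetilde x_\sharp)|+|w(x_0)\nu(x_0)|\le r_0/10+2\epsilon\le r_0/4, $$
provided $\widetilde C\ge C_0$ is large enough that $\epsilon\le r_0/\widetilde C$ is suitably small. Since $\mathrm{dist}(x_0,{\mathcal B})\ge r_0$ and $\epsilon\le r_0/\widetilde C$, for every $X\in{\mathcal B}$ both $|X-X_0|$ and $|X-\widetilde X_\sharp|$ are at least~$r_0/2$, while their difference is at most~$r_0/4$, so the two quantities are comparable up to a universal multiplicative constant, and~\eqref{TBGSHJDK-23eortkg} follows. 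The main technical obstacle is precisely the linear-in-$z$ integral against the asymmetric kernel~$K_*$: without the near-symmetry estimate in the second line of~\eqref{ojfe-32659867-mak}, this term would produce a divergence that destroys the scaling $O(\epsilon)$ required to conclude.
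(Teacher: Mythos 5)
Your proposal is correct and follows essentially the same route as the paper's proof: straighten ${\mathcal{R}}$ by the diffeomorphism, slide a paraboloid of effective opening $\sim\|w\|_{L^\infty({\mathcal{R}}\cap B_2)}/r_0^2$ centred at $\phi(x_0)$, test the hypothesis at the touching point, and then compare $|X-\widetilde X_\sharp|$ with $|X-X_0|$ using $\operatorname{dist}(x_0,{\mathcal{B}})\ge r_0$ and the smallness of $\|w\|_{L^\infty}$ relative to $r_0$. Your explicit even/odd splitting of $K_*$ to control the linear term $2z\cdot v$ (via the second estimate in~\eqref{ojfe-32659867-mak}, as in~\eqref{SkPALmzyHS20}) is exactly the cancellation that the paper's final bound $\le C_0(M+\mu)$ uses implicitly, so your write-up is, if anything, slightly more detailed at that step.
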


\begin{proof} We straighten~${\mathcal{R}}$ by a diffeomorphism~$\phi:{\mathcal{R}}\to B'_2:=B_2\cap\{x_n=0\}$
and set~$W(y):=w(\phi^{-1}(y))$.
We can also suppose for simplicity that~${\mathcal{R}}\cap B_1$ is mapped by~$\phi$ in~$B_1'$ and
that~${\mathcal{R}}\cap B_{3/2}$ is mapped by~$\phi$ in~$B_{3/2}'$.
Let~$x_0\in {\mathcal{R}}\cap B_1$ with~$B_{r_0}(x_0)\cap{\mathcal{B}}=\varnothing$. 
Let also~$y_0:=\phi(x_0)$ and notice that~$y_0\in B_{1}'$.

Given~$M>0$, we slide the parabola~$P(y):= -M|y-y_0|^2$ from below till we touch~$W$ at some point~$\widehat y$. 
The touching condition between the slid parabola and the graph of~$W$ entails that
\begin{equation}\label{oqjwdn234} W(\widehat y)+M|\widehat y-y_0|^2-M|y-y_0|^2\le W(y).\end{equation}
Here we choose
\begin{equation}\label{ipdqjwl0-MSODSnbik00lsd}
M:=C_*^2\,\|w\|_{L^\infty({{\mathcal{R}}}\cap B_2)} \max\left\{1,\frac1{r_0^2}\right\},
\end{equation}with~$C_*$ to be chosen sufficiently large.
In this way, by evaluating~\eqref{oqjwdn234} at~$y:=y_0$,
\begin{eqnarray*}&&
|\widehat y-y_0|\le
\sqrt{\frac{W(y_0)-W(\widehat y)}M}\le \sqrt{\frac{W(y_0)}M}=\sqrt{\frac{w(x_0)}M}\\&&\qquad\le \sqrt{\frac{1}{C_*^2\max\left\{1,\frac1{r_0^2}\right\}}}=\frac1{C_*}\min\{1,r_0\}.
\end{eqnarray*}
In particular, if~$C_*$ is large enough, we have that~$\widehat y\in B_{3/2}'$ and~$|\widehat y-y_0|<\frac{r_0}{10}$.

In this way, the point~$\widehat x:=\phi^{-1}(\widehat y)$
belongs to~${\mathcal{R}}\cap B_{3/2}$.
We can therefore utilize~\eqref{9iuyhgvcgyu8uygfcfy7ytgfP-0oigj-2-223t} at~$\widehat x$, finding that
\begin{equation}\label{i0powjfle2452tg}\begin{split}&\int_{\mathcal{B}}\frac{\Phi(X)}{|X-\widehat X|^{n+s}}\,dX
\le\int_{\mathcal{R}}
\big( w(\widehat x)-w(x)\big)\,K(x,\widehat x)
\,d{\mathcal{H}}^{n-1}_x+\mu\\&\qquad=
\int_{B_2'}
\big( W(\widehat y)-W(y)\big)\,K_*(y,\widehat y)
\,d{\mathcal{H}}^{n-1}_y+\mu
,\end{split}\end{equation}
where~$\widehat X:=\widehat x+w(\widehat x)\nu(\widehat x)$
and we used the notation in~\eqref{02ojfe22-2}.

Notice that, on the one hand, if~$L$ is the Lipschitz constant of~$\phi^{-1}$,
\begin{eqnarray*}&&|X-\widehat X|\le |X-X_0|+|X_0-\widehat X|\le
|X-X_0|+|x_0-\widehat x|+2\|w\|_{L^\infty({{\mathcal{R}}}\cap B_2)} \\&&\qquad\qquad\qquad
\le|X-X_0|+\frac{Lr_0}{10}+2\|w\|_{L^\infty({{\mathcal{R}}}\cap B_2)}\le|X-X_0|+\frac{3Lr_0}{10}.\end{eqnarray*}
On the other hand, if~$X\in{\mathcal{B}}$,
$$ |X-X_0|\ge |X-x_0|-\|w\|_{L^\infty({{\mathcal{R}}}\cap B_2)}\ge r_0-\|w\|_{L^\infty({{\mathcal{R}}}\cap B_2)}\ge\frac{9r_0}{10}.$$
By combining these observations, we find that
$$ |X-\widehat X|\le |X-X_0|+\frac{3L}{10}\cdot\frac{10}9|X-X_0|\le C_0|X-X_0|,$$for some~$C_0>1$.

This and~\eqref{i0powjfle2452tg}, up to renaming~$C_0$, give that
\begin{equation*}\begin{split}&\frac1{C_0}\int_{\mathcal{B}}\frac{\Phi(X)}{|X-X_0|^{n+s}}\,dX
\le\int_{B_2'}
\big( W(\widehat y)-W(y)\big)\,K_*(y,\widehat y)
\,d{\mathcal{H}}^{n-1}_y+\mu.\end{split}\end{equation*}
As a consequence, by~\eqref{oqjwdn234},
\begin{equation*}\begin{split}\frac1{C_0}\int_{\mathcal{B}}\frac{\Phi(X)}{|X-X_0|^{n+s}}\,dX
\le\;& M\int_{B_2'}\big( |y-y_0|^2-|\widehat y-y_0|^2\big)\,K_*(y,\widehat y)
\,d{\mathcal{H}}^{n-1}_y+\mu\\ \le\;& C_0\big(M+\mu\big).\end{split}\end{equation*}
The desired result thus follows in view of~\eqref{ipdqjwl0-MSODSnbik00lsd}.
\end{proof}

\section{Completion of the proof of Theorem~\ref{MAIN}}\label{HAR:NA}

To complete the proof of Theorem~\ref{MAIN}, we will make use of some technical results about the set
of ``good'' regular points. Namely, we describe the set of  regular points of a nonlocal minimal set
around which the boundary is a hypersurface of class~$C^2$ with bounded curvatures in a conveniently scale invariant setting. In the classical case, this set was introduced in~\cite[Lemma~1]{MR906394} and we provide here a nonlocal version of it.

\begin{lemma}\label{joqslwd-owjefg838r-1}
Let~$E$ be $s$-minimal in~$B_1$ and assume that
the origin belongs to its boundary.

Given~$\theta>0$, let~${\mathcal{M}}_{\theta,E}$
be the set of points~$x$
belonging to the regular part of~$\partial E$
such that
\begin{equation}\label{ojDSDV0RTGPRHO}\begin{split}&
{\mbox{$(\partial E)\cap B_{\theta|x|}(x)$ is contained in the regular part of~$\partial E$}}\\&
{\mbox{and possesses curvatures bounded in absolute value by }}\frac1{\theta|x|}.
\end{split}\end{equation}

Then, there exist~$\rho_0$, $\theta_0>0$, depending only on~$E$, $n$ and~$s$, such that, for every~$\rho\in(0,\rho_0]$ and~$\theta\in(0,\theta_0]$, we have that
\begin{equation}\label{12o0asoiIACCU} {\mathcal{M}}_{\theta,E}\cap(\partial B_\rho)\ne\varnothing.\end{equation}
\end{lemma}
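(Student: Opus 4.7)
The plan is to argue by contradiction combined with a blow-up at the origin. If the conclusion fails, then for each~$k\in\N$ the negation (applied with~$\theta_0=\rho_0=1/k$) produces~$\theta_k,\rho_k\in(0,1/k]$ with~${\mathcal{M}}_{\theta_k,E}\cap(\partial B_{\rho_k})=\varnothing$; in particular, $\rho_k,\theta_k\to 0$. I would first observe that the family~${\mathcal{M}}_{\theta,E}$ is monotone nonincreasing in~$\theta$, since decreasing~$\theta$ both shrinks the ball~$B_{\theta|x|}(x)$ and enlarges the allowed curvature bound~$1/(\theta|x|)$ in~\eqref{ojDSDV0RTGPRHO}. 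As a consequence, for any fixed~$\theta_\star>0$ and~$k$ large enough that~$\theta_k\le\theta_\star$, one has~${\mathcal{M}}_{\theta_\star,E}\cap(\partial B_{\rho_k})\subseteq{\mathcal{M}}_{\theta_k,E}\cap(\partial B_{\rho_k})=\varnothing$, so it suffices to produce a single~$\theta_\star>0$ (depending only on~$E$, $n$, $s$) for which~${\mathcal{M}}_{\theta_\star,E}\cap(\partial B_{\rho_k})\ne\varnothing$ along a subsequence.

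To produce this~$\theta_\star$, I would blow up at the origin by setting~$E_k:=\rho_k^{-1}E$, which is $s$-minimal on~$B_{1/\rho_k}$. By the standard compactness of $s$-minimal sets, a subsequence of~$E_k$ converges in~$L^1_{\rm loc}$ to a tangent cone~${\mathcal{C}}$ of~$E$ at the origin. The regularity theory of $s$-minimal cones from~\cite{MR3090533, MR3331523} ensures that the singular set~$\Sigma({\mathcal{C}})\subseteq\partial{\mathcal{C}}$ has Hausdorff dimension at most~$n-3$, so in particular~$(\Reg{\mathcal{C}})\cap(\partial B_1)\ne\varnothing$. Picking one such point~$y_\star$, there exist~$r_\star,C_\star>0$ so that~$(\partial{\mathcal{C}})\cap B_{r_\star}(y_\star)$ is a $C^2$ hypersurface with curvatures bounded in absolute value by~$C_\star$. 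I would then upgrade the convergence~$\partial E_k\to\partial{\mathcal{C}}$ to~$C^2$ convergence on~$B_{r_\star/2}(y_\star)$, by combining the improvement-of-flatness of~\cite{MR2675483} (which gives the initial~$C^{1,\alpha}$ parameterization once flatness holds, guaranteed here by the~$L^1_{\rm loc}$ convergence) with the bootstrap higher regularity of~\cite{MR3331523}.

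In this neighbourhood, $\partial E_k$ is a small~$C^2$ graph over~$\partial{\mathcal{C}}$. Since~$\partial{\mathcal{C}}$ is a cone through~$y_\star$, the radial function~$|\cdot|$ is non-constant along~$\partial{\mathcal{C}}$ near~$y_\star$ and the cone crosses every sphere~$\partial B_r$ with~$r$ near~$1$; an intermediate-value argument then provides, for~$k$ large, a point~$z_k\in(\partial E_k)\cap(\partial B_1)$ with~$z_k\to y_\star$, and in~$B_{r_\star/4}(z_k)$ the surface~$\partial E_k$ is of class~$C^2$ with curvatures bounded by~$2C_\star$. I then fix
$$ \theta_\star:=\min\left\{\frac{r_\star}{4},\,\frac{1}{2C_\star}\right\}.$$
Since~$|z_k|=1$ and~$B_{\theta_\star|z_k|}(z_k)=B_{\theta_\star}(z_k)\subseteq B_{r_\star/4}(z_k)$, the condition~\eqref{ojDSDV0RTGPRHO} is satisfied at~$z_k$ for the set~$E_k$ with parameter~$\theta_\star$. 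Using the joint scale invariance of~\eqref{ojDSDV0RTGPRHO} under the dilation (radii scale by~$\rho_k$ and curvatures by~$\rho_k^{-1}$), this unscales to~$\rho_k z_k\in{\mathcal{M}}_{\theta_\star,E}\cap(\partial B_{\rho_k})$, contradicting the emptiness above for~$k$ so large that~$\theta_k\le\theta_\star$.

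The main obstacle will be the $C^2$ convergence of~$\partial E_k$ to~$\partial{\mathcal{C}}$ on the regular part, since one needs both the initial improvement of flatness and the subsequent bootstrap for nonlocal minimal surfaces (at a point of~$\partial{\mathcal{C}}$ which is only~$C^2$, not real analytic). A minor auxiliary point is the existence of the intersection~$(\partial E_k)\cap(\partial B_1)$ near~$y_\star$, which I would handle via the transversality of the ray~$\{ty_\star\}$ to~$\partial B_1$ inside~$\partial{\mathcal{C}}$ and the smallness of the $C^2$ perturbation.
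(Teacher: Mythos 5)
Your proposal is correct and follows essentially the same route as the paper's proof: arguing by contradiction, blowing up along the radii $\rho_k$ to a tangent cone, picking a regular point of the cone on $\partial B_1$, upgrading the convergence to $C^{1,\alpha}$/$C^2$ near that point via improvement of flatness and bootstrap, finding nearby points of $\partial E_k$ on $\partial B_1$ with uniform curvature bounds, and rescaling back to contradict the assumed emptiness. Your explicit monotonicity of ${\mathcal{M}}_{\theta,E}$ in $\theta$ and the transversality argument for locating $z_k\in(\partial E_k)\cap(\partial B_1)$ are just more detailed renderings of steps the paper treats implicitly.
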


With reference to Lemma~\ref{joqslwd-owjefg838r-1}, we observe that, for all~$r>0$,
\begin{equation}\label{oqjdlf-12poel}
{\mathcal{M}}_{\theta,\frac{E}r}=\frac{{\mathcal{M}}_{\theta,E}}r.
\end{equation}
Then, we have:

\begin{corollary}\label{12o0asoiIACCU12}
Let~$E_1$, $E_2$ be $s$-minimal sets in~$B_1$ and assume that
the origin belongs to their boundary.
Assume also that~$E_1$ and~$E_2$ have the same tangent cone at the origin.

Let~$\theta_0$, $\rho_0>0$ be as in Lemma~\ref{joqslwd-owjefg838r-1},
used here with~$E:=E_1$.
Let also~${\mathcal{M}}$ be the set~${\mathcal{M}}_{\theta_0,E}$
in Lemma~\ref{joqslwd-owjefg838r-1} with~$E=E_1$.

Suppose that
$$ E_2\setminus E_1=\Big\{
x+t\nu(x),\;{\mbox{ with }}\; x\in{\mathcal{M}},\; t\in\big[0,w(x)\big)
\Big\},$$
for some function~$w$ of class~$C^{2}$, where~$\nu$ denotes the external unit normal of~$E_1$ at its regular points.

Then,
\begin{equation}\label{12o0asoiIACCU1} \lim_{r\searrow0}
\frac{\displaystyle\sup_{x\in(\partial B_r)\cap{\mathcal{M}}}|w(x)|}r=0.\end{equation}

In addition, if~$w(x)\ne0$ for all~$x\ne0$, then there exists an infinitesimal sequence of points~$z_{\star,k} \in {\mathcal{M}}$ such that
\begin{equation}\label{12o0asoiIACCU13} \frac{|w(x)|}{|x|} \le \frac{2 |w(z_{\star,k})|}{|z_{\star,k}|}\end{equation}
 for all~$x\in {\mathcal{M}}$ with~$|x| \le |z_{\star,k}|/2$.
 
Finally, the distance of~$z_{\star,k}$ from the singular set of~$E_1$ is at least~$\theta_0|z_{\star,k}|$.
\end{corollary}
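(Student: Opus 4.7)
My plan follows the three claims in the order they appear.

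\textbf{Proof of \eqref{12o0asoiIACCU1}.} I would argue by contradiction: suppose there exist $r_j\searrow0$, $x_j\in(\partial B_{r_j})\cap\mathcal{M}$ and a constant $c>0$ with $|w(x_j)|/r_j\ge c$. Setting $y_j:=x_j/r_j\in\partial B_1$, the scaling identity~\eqref{oqjdlf-12poel} gives $y_j\in\mathcal{M}_{\theta_0,E_1/r_j}$, so the portion $(\partial(E_1/r_j))\cap B_{\theta_0}(y_j)$ is regular with curvatures bounded uniformly by $1/\theta_0$, independently of $j$. Extracting a subsequence, $y_j\to y_\infty\in\partial B_1$. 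By hypothesis $E_1$ and $E_2$ share the same tangent cone at the origin, so $E_1/r_j$ and $E_2/r_j$ both converge locally in $L^1$ to the same $s$-minimal cone $\mathcal{C}$. Thanks to the improvement-of-flatness theory of~\cite{MR2675483} and the bootstrap regularity of~\cite{MR3331523}, combined with the uniform curvature control, $y_\infty$ is a regular point of $\partial\mathcal{C}$ and $\partial(E_1/r_j)\to\partial\mathcal{C}$ in $C^2$ uniformly in a neighborhood of $y_\infty$; the same regularity applies to $\partial(E_2/r_j)$ near $y_\infty$, again via improvement of flatness (each $E_2/r_j$ is $s$-minimal and its boundary is $L^1$-close to $\partial\mathcal{C}$ near the regular point $y_\infty$). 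Now $y_j+(w(x_j)/r_j)\nu(y_j)\in\partial(E_2/r_j)$ by the graph parameterization, and passing to the limit this would give a point of $\partial\mathcal{C}$ displaced from $y_\infty$ by at least $c$ in the normal direction at $y_\infty$, contradicting the local smoothness of $\partial\mathcal{C}$ there.

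\textbf{Proof of \eqref{12o0asoiIACCU13}.} Define $f(x):=|w(x)|/|x|$ on $\mathcal{M}\setminus\{0\}$, which is strictly positive under the assumption $w(x)\neq 0$ for $x\ne 0$. By~\eqref{12o0asoiIACCU1}, $f(x)\to 0$ as $|x|\to 0$ along $\mathcal{M}$. Given $k\ge1$, I choose any starting point $y_0\in\mathcal{M}$ with $|y_0|\le 1/k$, using \eqref{12o0asoiIACCU} to ensure such a point exists. If $f(x)\le 2f(y_0)$ for every $x\in\mathcal{M}$ with $|x|\le|y_0|/2$, set $z_{\star,k}:=y_0$; otherwise pick $y_1\in\mathcal{M}$ with $|y_1|\le|y_0|/2$ and $f(y_1)>2f(y_0)$ and iterate. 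If this iteration never terminates, it produces $\{y_n\}\subset\mathcal{M}$ with $|y_n|\le 2^{-n}|y_0|$ and $f(y_n)>2^n f(y_0)$, contradicting $f(y_n)\to 0$. Hence the procedure halts, yielding $z_{\star,k}$; letting $k\to\infty$ delivers the infinitesimal sequence with the required comparison property.

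\textbf{Distance from the singular set.} Since $z_{\star,k}\in\mathcal{M}=\mathcal{M}_{\theta_0,E_1}$, by the very definition~\eqref{ojDSDV0RTGPRHO} the intersection $(\partial E_1)\cap B_{\theta_0|z_{\star,k}|}(z_{\star,k})$ lies in the regular part of $\partial E_1$, so the distance from $z_{\star,k}$ to the singular set is at least $\theta_0|z_{\star,k}|$.

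\textbf{Main obstacle.} The technically delicate step is the blow-up argument for~\eqref{12o0asoiIACCU1}: one needs genuine $C^1$ (or better) convergence of \emph{both} $\partial(E_1/r_j)$ and $\partial(E_2/r_j)$ to $\partial\mathcal{C}$ in a neighborhood of $y_\infty$ in order to transform the Hausdorff/normal-displacement gap $w(x_j)/r_j$ into a geometric obstruction. This is where the uniform curvature bound built into the definition of $\mathcal{M}_{\theta_0,E_1}$ and the improvement-of-flatness regularity theory enter in an essential way, together with the assumption that the tangent cones of $E_1$ and $E_2$ at the origin agree.
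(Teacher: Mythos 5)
Your proposal is correct. The first claim \eqref{12o0asoiIACCU1} is proved exactly as in the paper: the same contradiction blow-up, the same use of the scaling identity \eqref{oqjdlf-12poel} and of the uniform curvature bound encoded in $\mathcal{M}_{\theta_0,E_1}$ to get convergence near the regular limit point $y_\infty$, and the same conclusion from the fact that $E_2/r_j$ converges to the same cone $\mathcal{C}$. One spot to tighten there: since $w(x_j)/r_j$ is only bounded below by $c$ (it could a priori be unbounded), and since a boundary point of $\mathcal{C}$ sitting at normal distance $\ge c$ from $y_\infty$ is not by itself impossible (the normal ray may meet $\partial\mathcal{C}$ again far away), the clean contradiction is rather that the normal segment $\{y_j+t\nu_j(y_j):\,t\in[0,\min(c,w(x_j)/r_j))\}$ lies in $(E_2/r_j)\setminus(E_1/r_j)$, while near $y_\infty$ both $\partial(E_1/r_j)$ and $\partial(E_2/r_j)$ are $C^{1,\alpha}$ graphs converging to the same limit $\partial\mathcal{C}$, which forces the normal separation at $y_j$ to vanish; this is precisely how the paper concludes that $w_j(y_j)\to0$. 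For the second claim \eqref{12o0asoiIACCU13} you take a genuinely different, and arguably more elementary, route: a stopping-time iteration on $f(x)=|w(x)|/|x|$, doubling $f$ while halving $|x|$ until the process must stop because $f\to0$ by \eqref{12o0asoiIACCU1}. The paper instead introduces $\sigma(r):=\sup_{(\partial B_r)\cap\mathcal{M}}|w|/r$, selects an approximate maximizer $r_{\star,k}$ of $\sigma$ over $(0,r_k]$ up to a factor $\mu_0$, uses \eqref{12o0asoiIACCU} to pick $z_{\star,k}$ on $\partial B_{r_{\star,k}}$ nearly realizing $\sigma(r_{\star,k})$, and tunes $\mu_0=\sqrt2$ to produce the constant $2$; your iteration reaches the same conclusion while needing \eqref{12o0asoiIACCU} only to seed the starting point and avoiding any discussion of suprema of $\sigma$ over intervals, whereas the paper's selection makes the role of the sphere-wise supremum explicit and keeps the constant adjustable. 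The final assertion on the distance to the singular set is handled identically in both arguments, directly from the definition \eqref{ojDSDV0RTGPRHO} of $\mathcal{M}_{\theta_0,E_1}$.
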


For the convenience of the reader, the proofs of Lemma~\ref{joqslwd-owjefg838r-1}
and Corollary~\ref{12o0asoiIACCU12}
are postponed to Appendix~\ref{GOODAP}.
\medskip

We can now complete the proof of Theorem~\ref{MAIN}  by relying on the work carried out so far and on the Harnack Inequality by Cabr\'e and Cozzi in~\cite{MR3934589}.

\begin{proof}[Proof of Theorem~\ref{MAIN}]
Suppose, by contradiction, that~$E_1\ne E_2$.
Without loss of generality, we may suppose that
\begin{equation}\label{DESM}
{\mbox{$E_1$ and~$E_2$ share the same tangent cone at the origin.}}
\end{equation}
This is a standard procedure in geometric measure theory, based on blow-up, dimensional reduction, and regularity theory. We recall the details for the facility of the reader.

Indeed, if~\eqref{DESM} does not hold, we have a converging blow-up sequence for~$E_1$ approaching a cone~${\mathcal{C}}_1$
with the corresponding blow-up sequence for~$E_2$ approaching a cone~${\mathcal{C}}_2\ne{\mathcal{C}}_1$.
We take a rotation~${\mathcal{R}}_1$ of~${\mathcal{C}}_1$ such that~$E_1^{(1)}:=
{\mathcal{R}}_1{\mathcal{C}}_1\subseteq {\mathcal{C}}_2=:E_2^{(1)}$ and there exists~$p^{(1)}\in(\partial E_1^{(1)})\cap (\partial E_2^{(1)})\cap (\partial B_1)$.

We then consider a blow-up of~$E_1^{(1)}$ and~$E_2^{(1)}$ at~$p^{(1)}$,
which produces two new $s$-minimal cones in~$\R^n$, which will be denoted by~${\mathcal{C}}_1^{(1)}$
and~$ {\mathcal{C}}_2^{(2)}$, with~${\mathcal{C}}_1^{(1)}
\subseteq{\mathcal{C}}_2^{(2)}$.

Now, two cases can occur. If~${\mathcal{C}}_1^{(1)}={\mathcal{C}}_2^{(2)}$, it suffices to replace~$p$,
$E_1$, $E_2$, ${\mathcal{C}}_1$ and~${\mathcal{C}}_2$ respectively with~$p^{(1)}$,
$E_1^{(1)}$, $E_2^{(1)}$, ${\mathcal{C}}_1^{(1)}$ and~${\mathcal{C}}_2^{(1)}$: in this way 
we have obtained~\eqref{DESM} in this new configuration.

If instead~${\mathcal{C}}_1^{(1)}\ne{\mathcal{C}}_2^{(1)}$, we observe that
both these cones are cylinders over~$\R^{n-1}$. In this way, by the dimensional reduction (see~\cite{MR2675483}),
we have found two $s$-minimal cones in~$\R^{n-1}$, which we denote by~$\widetilde E_1^{(2)}$ and~$\widetilde E_2^{(2)}$,
such that~$\widetilde E_1^{(2)}\subseteq \widetilde E_2^{(2)}$ and~$\widetilde E_1^{(2)}\ne \widetilde E_2^{(2)}$ (and, up to an isometry, ${\mathcal{C}}_j^{(1)}=\widetilde E_j^{(2)}\times\R$ for~$j\in\{1,2\}$).

We thus repeat the previous algorithm, taking a rotation~${\mathcal{R}}_2$ in~$\R^{n-1}$, such that~$E_1^{(2)}:=
{\mathcal{R}}_2\widetilde E_1^{(2)}\subseteq \widetilde{E}_2^{(1)}=:E_2^{(1)}$ and there exists~$p^{(2)}\in(\partial E_1^{(2)})\cap (\partial E_2^{(2)})\cap (\partial B_1)$. Then, a blow-up at~$p^{(2)}$ produces
two $s$-minimal cones~${\mathcal{C}}_1^{(2)}$ and~${\mathcal{C}}_2^{(2)}$, which either coincide
(whence we replace~$p$,
$E_1$, $E_2$, ${\mathcal{C}}_1$ and~${\mathcal{C}}_2$ respectively with~$p^{(2)}$,
$E_1^{(2)}$, $E_2^{(2)}$, ${\mathcal{C}}_1^{(2)}$ and~${\mathcal{C}}_2^{(2)}$) or not, in which case we apply again the dimensional reduction (reducing now to~$\R^{n-2}$) and proceed.

This algorithm will stop at a certain dimension, producing the same $s$-minimal cones after a blow-up, since, by the regularity of $s$-minimal cones in dimension~$2$
(see~\cite{MR3090533}), we know that halfspaces are the only nontrivial minimal cones in~$\R^2$.
The proof of~\eqref{DESM} is thereby complete.

Hence, from now on, we will denote by~${\mathcal{C}}$
the common tangent cone of~$E_1$ and~$E_2$ along a blow-up sequence that we are now going to specify.
To this end, in a neighborhood of the origin,
we write~$E_2$ in normal coordinates with respect to~$E_1$ at its regular points, that is suppose that, at a set of regular points,
$E_2\setminus E_1$ has the form~$x+t\nu(x)$, with~$x\in\partial E_1$ and~$t\in\big[0,w_0(x)\big)$,
for a suitable function~$w_0>0$.
Then, by Corollary~\ref{12o0asoiIACCU12}, we find a set~${\mathcal{M}}_0$ of regular points for~$E_1$
and an infinitesimal sequence of points~$z_{\star,k} \in {\mathcal{M}}_0$ such that
\begin{equation} \label{Aappcjspd-01}\frac{w_0(x)}{|x|} \le \frac{2 w_0(z_{\star,k})}{|z_{\star,k}|}\end{equation}
for all~$x\in {\mathcal{M}}_0$ with~$|x| \le |z_{\star,k}|/2$.
In addition, by~\eqref{ojDSDV0RTGPRHO},
\begin{equation}\label{0DIFUJH-0wiuoe2irhfegrf-iqpdwojfekgu0rjog}
{\mbox{the distance of any point~$x\in{\mathcal{M}}_0$ from the singular set is at least~$\theta_0|x|$,}}\end{equation}
with~$\theta_0>0$.

What is more, by~\eqref{12o0asoiIACCU}, we also know that, for all~$\rho\in(0,\rho_0]$,
\begin{equation}\label{0DIFUJH-0wiuoe2irhfegrf-iqpdwojfekgu0rjog2}
{\mathcal{M}}_0\cap(\partial B_{\rho})\ne\varnothing.\end{equation}
Hence, we choose
\begin{equation} \label{Aappcjspd-02}
r_k:=|z_{\star,k}|\end{equation} and consider the corresponding blow-up sequences for~$j\in\{1,2\}$ defined by 
 \begin{equation}\label{EJK}
E_{j,k}:=\frac{E_j}{r_k}.\end{equation}

We now apply Theorem~\ref{PRIMOLE-TH}. More specifically, the sets~$E_1$ and~$E_2$ in the statement of Theorem~\ref{PRIMOLE-TH}
are here the sets~$E_{1,k}$ and~$E_{2,k}$, as defined in~\eqref{EJK}, with~$k$ large enough.

The gist of the construction is that we cover the singular set of~${\mathcal{C}}$ by
a set of small balls (the fact that the singular set has dimension at most~$n-3$,
due to~\cite{MR3090533, MR3331523} guarantees that these balls can be chosen to satisfy~\eqref{AKSMHAU} for~$\eta$ as small as we wish).

Since, for~$k$ large enough,~$E_{1,k}$ and~$E_{2,k}$ locally lie in a small neighborhood of~${\mathcal{C}}$, we have that both~$E_{1,k}$ and~$E_{2,k}$ are smooth away from the above covering of the singular set of~${\mathcal{C}}$, thanks to the improvement of flatness of nonlocal minimal surfaces put forth in~\cite[Corollary~4.4 and Theorem~6.1]{MR2675483}
(in this way, the convergence of~${E_{1,k}}$ and~${E_{2,k}}$ to the limit cone~${\mathcal{C}}$ occurs in the~$C^{1,\alpha}$ sense away from any small neighborhood of the singular set of~${\mathcal{C}}$, and actually in the~$C^k$ sense for any~$k\ge2$, thanks to the bootstrap regularity in~\cite{MR3331523}).

This allows us to look at a ``large'' set\footnote{Note that this set~${\mathcal{G}}$ possibly contains~${\mathcal{M}}_0$.} 
${\mathcal{G}}$ containing regular points as in Theorem~\ref{PRIMOLE-TH}
(and at a small shrinkage of it, namely~${\mathcal{G}}'$) such that all the singular points
of~$\partial{\mathcal{C}}$, $\partial E_1$ and~$\partial E_2$ in a large ball~$B_R$
lie outside~${\mathcal{G}}$ (also, ${\mathcal{G}}$ covers all~$B_R$, up to a negligible covering of balls, as specified in~\eqref{AKSMHAU}).

Accordingly, for~$j\in\{1,2\}$ and~$k$ sufficiently large,
one can parameterize~${E_{j,k}}$ as a graph of class~$C^2$ in the normal direction of~${\mathcal{C}}$
away from a small neighborhood of the singular set. 

It is however technically simpler to recenter this parameterization on the ``middle surface'', namely to parameterize~$E_2=E_{2,k}$ in terms of~$E_1=E_{1,k}$, and this corresponds to the
normal parameterization~$w=w_k$ in Theorem~\ref{PRIMOLE-TH}
(here, following~\eqref{EJK}, we are taking~$w_k(x):=\frac{w_0(r_kx)}{r_k}$ for~$x\in(\partial E_{1,k})\cap{\mathcal{G}}$).

We stress that since both~$E_{1,k}$ and~$E_{2,k}$ converge to~${\mathcal{C}}$ locally in the~$C^2$-sense away from the singular points of~$\partial{\mathcal{C}}$, we also know
that the $C^2$-norm of~$w=w_k$ is as small as we like, provided that~$k$ is chosen large enough (possibly in dependence also of the covering of the singular set, which is now fixed once and for all).

We can therefore utilize Theorem~\ref{PRIMOLE-TH} in this context and conclude that the normal parameterization~$w=w_k$ of~$E_2=E_{2,k}$ with respect to~$E_1=E_{1,k}$
satisfies the equation in~\eqref{o9eudashiqpowudjhifeg}, with suitable integrable kernels, as defined in~\eqref{IPSJLkear34}. More specifically, since~$E_1$ and~$E_2$ are $s$-minimal sets and therefore~$H^s_{E_1}=H^s_{E_2}$ at every regular boundary point
(see~\cite[Theorem~5.1]{MR2675483}), we obtain from Theorem~\ref{PRIMOLE-TH} that
\begin{equation}\label{9iuyhgvcgyu8uygfcfy7ytgfP-0oigj}\begin{split}&0
=\int_{(\partial E_1)\cap {\mathcal{G}}'}
\big( w(x_0)-w(x)\big)\,K_1(x,x_0)
\,d{\mathcal{H}}^{n-1}_x\\&\qquad\qquad-w(x_0)\int_{(\partial E_1)\cap {\mathcal{G}}'}
\big(1-\nu(x_0)\cdot\nu(x)\big)\,K_2(x,x_0)
\,d{\mathcal{H}}^{n-1}_x\\&\qquad\qquad
+\frac12\int_{\R^n\setminus {\mathcal{G}}'}\frac{\widetilde\chi_{E_2}(X)-\widetilde\chi_{E_1}(X)}{|X-X_0|^{n+s}}\,dX
+O\left( w(x_0)\eta\right)
+O\left(\frac{w(x_0)}{R^{1+s}}\right),\end{split}\end{equation}
where~$\nu=\nu_k$ is the outer unit normal of~$\partial E_1=\partial E_{1,k}$ at its regular points and~$X_0:=x_0+w(x_0)\nu(x_0)$.

Strictly speaking, the function~$w$ does not need to be defined everywhere in~$\R^n$ for the validity of~\eqref{9iuyhgvcgyu8uygfcfy7ytgfP-0oigj}, since such an equation does not consider values of~$w$ outside certain sets, but we will implicitly suppose that~$w$ is defined everywhere for definiteness.

We now take~${\mathcal{G}}''\Subset{\mathcal{G}}'$ and~$x_0\in(\partial E_1)\cap B_{R/2}\cap{\mathcal{G}}''$
(as mentioned on page~\pageref{aojscnSMNEdcvSD3jx},
this gives that~${\mathcal{G}}''$ lies in a small neighborhood of~$\partial E_1$).

In this setting, owing to Lemma~\ref{ZX-ipfe-jJSL-2-2} and to the fact that~$E_1\subseteq E_2$, we see that, when the size of~$w$ is small,
\begin{equation}\label{TBGSHJDK-23eortkg-2-2}0\le
\int_{\R^n\setminus {\mathcal{G}}'}\frac{\widetilde\chi_{E_1}(X)-\widetilde\chi_{E_2}(X)}{|X-X_0|^{n+s}}\,dX= O(\|w\|_{L^\infty(B_R\cap{\mathcal{G}})}).
\end{equation}

Now, since~$E_{1}\not=E_{2}$, we have that~$w$ does not vanish identically, and in fact~$w>0$ on the regular part of~$\partial E_1$
(otherwise, we could compute the nonlocal mean curvature of~$E_1$ pointwise using the smoothness of~$\partial E_1$, as well as the one of~$E_2$
in the viscosity sense and get a contradiction from the two integral contributions and the set inclusion).
Therefore we can normalize~$w$ to take value~$1$ at a suitable point.
Roughly speaking, it would be desirable to pick this point to reach the supremum of~$\frac{ w(x)}{|x|}$, so to obtain a ``linear'' separation about the minimal sheets at the origin and thus contradict~\eqref{DESM}: however, this choice
of the normalizing point may be impossible, since the supremum of~$\frac{ w(x)}{|x|}$ might well be on the singular set (where~$w$ is not even defined)
or dangerously close to it.
This is the reason for which we have chosen an appropriate sequence of blow-up radii~$r_k$ in~\eqref{Aappcjspd-01}
and~\eqref{Aappcjspd-02}.

In this setting, we define~$x_{\star,k}:=\frac{z_{\star,k}}{|z_{\star,k}|}=\frac{z_{\star,k}}{r_k}$
and, in light of~\eqref{0DIFUJH-0wiuoe2irhfegrf-iqpdwojfekgu0rjog},
we stress that the distance of~$x_{\star,k}$ from the singular set of~$\partial E_{1,k}$ is at least~$\theta_0$.
This ensures that, up to a subsequence, $x_{\star,k}$ converges\footnote{Let us stress that the fact that the limit point~$x_{\star,\infty}$ remains bounded and
bounded away from the singular set is essential to apply the
Harnack Inequality in~\cite{MR3934589}: this is the reason for which
we can well allow intermediate constants to depend
on the radius of the large
ball that we are considering, on the set of small balls chosen to
cover the singular set, as well as on the regularity of the $s$-minimal sheets
away of this cover, since we will pass~$k\to+\infty$
before removing this large ball and this small cover,
but we will employ the normalization at the limit point~$x_{\star,\infty}$
to bound from below the last term in~\eqref{piwdjfk02-24}.}
to a regular point~$x_{\star,\infty}$ of the limit cone as~$k\to+\infty$.

Furthermore, by~\eqref{0DIFUJH-0wiuoe2irhfegrf-iqpdwojfekgu0rjog2}, given~$\delta\in\left(0,\frac12\right]$,
to be chosen conveniently small in what follows,
we can pick a point~$y_k\in
{\mathcal{M}}_0\cap(\partial B_{\delta r_k})$. In this way, by~\eqref{0DIFUJH-0wiuoe2irhfegrf-iqpdwojfekgu0rjog},
we also have that the distance of~$y_{\star,k}:=\frac{y_k}{r_k}$ from the singular set of~$\partial E_{1,k}$ is at least~$\delta\theta_0$
and therefore, up to a subsequence, $y_{\star,k}$ converges to a regular point~$y_{\star,\infty}$ of the limit cone as~$k\to+\infty$.

We also recall~\eqref{Aappcjspd-01} and write that
\begin{eqnarray*}
\frac{w_0(y_k)}{\delta r_k}=\frac{w_0(y_k)}{|y_k|} \le \frac{2 w_0(z_{\star,k})}{|z_{\star,k}|}=
\frac{2 w_0(z_{\star,k})}{r_k}
\end{eqnarray*}
and accordingly
\begin{equation}\label{TBcosdkcmTRAnsdc0o2ekdmf02erfgvb}
w_k(y_{\star,k})\le
2 \delta w_k(x_{\star,k}).
\end{equation}

We now define
$$ v(x)=v_k(x):=\frac{w_k(x)}{w(x_{\star,k})}$$ and we divide~\eqref{9iuyhgvcgyu8uygfcfy7ytgfP-0oigj} by~$w(x_{\star,k})=w_k(x_{\star,k})$, recalling also~\eqref{TBGSHJDK-23eortkg-2-2}, to find that, for all~$x_0\in(\partial E_1)\cap B_{R/2}\cap{\mathcal{G}}''$,
\begin{equation}\label{iposjakdhfdVASNZo}\begin{split}&0
=\int_{(\partial E_1)\cap {\mathcal{G}}'}
\big( v(x_0)-v(x)\big)\,K_1(x,x_0)
\,d{\mathcal{H}}^{n-1}_x\\&\qquad\qquad-v(x_0)\int_{(\partial E_1)\cap {\mathcal{G}}'}
\big(1-\nu(x_0)\cdot\nu(x)\big)\,K_2(x,x_0)
\,d{\mathcal{H}}^{n-1}_x\\&\qquad\qquad
-\psi(x_0)
+O\left(\eta\right)
+O\left(\frac{1}{R^{1+s}}\right),\end{split}\end{equation}
where~$0\le\psi=O(1)$.

We recall that we are using the short notation for the set~$E_1=E_{1,k}$.

Also, if~${\mathcal{G}}'''\Subset{\mathcal{G}}''$, we have that, in~$(\partial E_1)\cap B_{R/4}\cap{\mathcal{G}}'''$,
\begin{equation}\label{DDJH-SKDrtfg}
0\le v\le O(1).\end{equation} 
Indeed, we first observe that~$v\ge0$, since~$w\ge0$. The upper bound in~\eqref{DDJH-SKDrtfg}
follows from the previously developed regularity theory.
Specifically, one first employs Lemma~\ref{ZX-ipfe-jJSL}
and finds that
\begin{equation*}
\int_{(\partial E_1)\cap {\mathcal{G}}'}
v(x)\,d{\mathcal{H}}^{n-1}_x\le O(1).\end{equation*}
This inequality provides the uniform bound corresponding to~\eqref{XM-skdadM-BIS} in the present setting.
Now, the upper bound in~\eqref{DDJH-SKDrtfg} is a consequence of Lemma~\ref{VOL}. The proof of~\eqref{DDJH-SKDrtfg} is thereby complete.

Accordingly, by the H\"older estimates in Lemma~\ref{DERGED-Ge}, we deduce that the $C^{\alpha}$-norm of~$v$ is bounded locally uniformly in~$k$ (and we stress that, since~$w=w_k$, also~$v=v_k$).

Therefore, up to a subsequence, $v_k$ converges locally uniformly in~$B_{R/4}\cap{\mathcal{G}}'''$ to a function~$v_\infty$. In view of~\eqref{iposjakdhfdVASNZo},
$v_\infty$ satisfies, for all~$x_0\in (\partial E_1)\cap B_{R/4}\cap{\mathcal{G}}'''$,
\begin{equation}\label{ojdcn834rifgjvCRoFRr45VIS}\begin{split}&0
\le\int_{(\partial {\mathcal{C}})\cap {\mathcal{G}}'}
\frac{v_\infty(x_0)-v_\infty(x)}{|x-x_0|^{n+s}}
\,d{\mathcal{H}}^{n-1}_x\\&\qquad\qquad-v_\infty(x_0)\int_{(\partial{\mathcal{C}})\cap {\mathcal{G}}'}\frac{1-\nu(x_0)\cdot\nu(x)}{{|x-x_0|^{n+s}}}
\,d{\mathcal{H}}^{n-1}_x
+O\left(\eta\right)
+O\left(\frac{1}{R^{1+s}}\right).\end{split}\end{equation}
We stress that we have used here above the fact that~$\psi(x_0)\ge0$
and also that~\eqref{ojdcn834rifgjvCRoFRr45VIS} is intended in the viscosity sense:
indeed, to obtain~\eqref{ojdcn834rifgjvCRoFRr45VIS} from~\eqref{iposjakdhfdVASNZo}
one can project~$(\partial E_1)\cap {\mathcal{G}}'$ on the underlying cone, and
use the asymptotics of the kernel and the stability of viscosity solutions
under locally uniform convergence to pass to the limit.

Also, we can now take~$R$ in~\eqref{ojdcn834rifgjvCRoFRr45VIS}
as large as we wish and invade all the space outside the singular set by the ``good'' domains~${\mathcal{G}}$, ${\mathcal{G}}'$ and~${\mathcal{G}}''$.
In this way, we find that,
on the regular part~$\Reg{\mathcal{C}}$ of~$\partial{\mathcal{C}}$,
\begin{equation*}\begin{split}&0
\le\int_{\Reg {\mathcal{C}}}
\frac{ v_\infty(x_0)-v_\infty(x)}{|x-x_0|^{n+s}}
\,d{\mathcal{H}}^{n-1}_x -v_\infty(x_0)\int_{\Reg{\mathcal{C}}}
\frac{1-\nu(x_0)\cdot\nu(x)}{|x-x_0|^{n+s}}
\,d{\mathcal{H}}^{n-1}_x.\end{split}\end{equation*}

Now, we wish to apply Proposition~\ref{LANSCDPOJFIBFerngvny83iurjf} to~$v_\infty$
and~${\mathcal{R}}:=\Reg {\mathcal{C}}$. To this end, we observe that~\eqref{o3ePP-1-pre}
holds true, due to the regularity theory of nonlocal minimal surfaces, see~\cite{MR3090533, MR3331523},
and~\eqref{o3ePP-1}
is satisfied, thanks to the perimeter estimates
for nonlocal minimal surfaces, see~\cite[equation~(1.16)]{MR3981295}.

It thus follows
from Proposition~\ref{LANSCDPOJFIBFerngvny83iurjf} that,
for every compact subset~${\mathcal{K}}$ of~$\Reg {\mathcal{C}}$,
\begin{equation}\label{quwje13POvq} \inf_{{\mathcal{K}}\cap B_1} v_\infty
\ge c\left( \int_{{\mathcal{K}}\cap B_1}v_\infty(x)\,d{\mathcal{H}}^{n-1}_x+\int_{{\mathcal{K}}\setminus B_1}\frac{v_\infty(x)}{|x|^{n+s}}\,d{\mathcal{H}}^{n-1}_x\right),\end{equation}
for some~$c>0$, depending only on~$n$ and~$s$.

Moreover,
\begin{equation*}v_\infty(x_{\star,\infty})=\lim_{k\to+\infty}v_k(x_{\star,k})=1.\end{equation*}
Using this information into~\eqref{quwje13POvq}, we thus obtain
\begin{equation} \label{piwdjfk02-24}
m:=\inf_{{\mathcal{K}}\cap B_1} v_\infty\ge c\int_{{\mathcal{K}}\cap B_1}v_\infty(x)\,d{\mathcal{H}}^{n-1}_x>0.\end{equation}
As a result, for every compact subset~${\mathcal{K}}$ of~$\Reg {\mathcal{C}}\cap B_{3/4}$, we have that
$$ \inf_{x\in {\mathcal{K}}}\frac{w_k(x)}{w_k(x_{\star,k})}=\inf_{x\in {\mathcal{K}}}v_k(x)\ge\frac{m}2,$$
as long as~$k$ is large enough (possibly in dependence of~${\mathcal{K}}$).

In particular,
$$ \frac{w_k(y_{\star,k})}{w_k(x_{\star,k})}\ge\frac{m}2,$$
which gives a contradiction with~\eqref{TBcosdkcmTRAnsdc0o2ekdmf02erfgvb} when~$\delta$ is sufficiently small.\end{proof}

\begin{appendix}

\section{The set of ``good'' regular points}\label{GOODAP}

Here we prove Lemma~\ref{joqslwd-owjefg838r-1} and Corollary~\ref{12o0asoiIACCU12}.

\begin{proof}[Proof of Lemma~\ref{joqslwd-owjefg838r-1}] Suppose the claim of Lemma~\ref{joqslwd-owjefg838r-1} does not hold true. Then, there exist sequences~$\theta_j\searrow0$ and~$\rho_j\searrow0$
for which~${\mathcal{M}}_{\theta_j,E}\cap(\partial B_{\rho_j})=\varnothing$.
That is, if~$x\in\partial B_{\rho_j}$
belongs to the regular part of~$\partial E$ and
also~$(\partial E)\cap B_{\theta_j|x|}(x)$ is contained the regular part of~$\partial E$, then necessarily its curvatures are not bounded 
in absolute value by~$\frac1{\theta_j|x|}=\frac{1}{\theta_j\rho_j}$.

Let~$F_j:=E/\rho_j$. Up to a subsequence, we know that~$F_j$ converges locally uniformly to a cone~${\mathcal{C}}$ which is $s$-minimal (see~\cite{MR2675483}). In fact, thanks to the improvement of flatness of nonlocal minimal surfaces (see~\cite[Corollary~4.4 and Theorem~6.1]{MR2675483}), this convergence occurs in the~$C^{1,\alpha}$ sense at the regular points of~$\partial{\mathcal{C}}$, and actually in the~$C^k$ sense for any~$k\ge2$ (see~\cite{MR3331523}).

Hence, we pick a regular point~$y_0\in(\partial{\mathcal{C}})\cap (\partial B_1)$ 
(whose existence is guaranteed by the regularity theory of nonlocal minimal surfaces, see~\cite{MR3090533}). Then, we find a sequence~$y_j\in\partial B_1$
of regular points of~$\partial F_j$ approaching~$y_0$ as~$j\to+\infty$
and~$(\partial F_j)\cap B_{r_0}(y_j)$ consists of regular points
with curvatures bounded in absolute value by~$M_0$, for suitable~$r_0$, $M_0>0$.

Scaling back and setting~$x_j:=\rho_j y_j\in\partial B_{\rho_j}$, we find that~$(\partial E)\cap B_{r_0\rho_j}(x_j)$ lies
in the regular set of~$\partial E$, with
curvatures bounded in absolute value by~$\frac{M_0}{\rho_j}$, which, for large~$j$, is strictly less than~$\frac1{\theta_j\rho_j}$, contradiction.
\end{proof}

\begin{proof}[Proof of Corollary~\ref{12o0asoiIACCU12}]
Before proving~\eqref{12o0asoiIACCU1}, we stress that
the supremum in this equation makes sense, thanks to~\eqref{12o0asoiIACCU}.

We now prove~\eqref{12o0asoiIACCU1}.
Suppose not. Then, there exist~$c>0$, an infinitesimal sequence~$r_j>0$ and~$
x_j\in(\partial B_{r_j})\cap{\mathcal{M}}$
such that
\begin{equation}\label{oksmxcABodkjLmBOaksmx1}  |w(x_j)|\ge c r_j.\end{equation}
Thus, for~$i\in\{1,2\}$, we define~$E_{i,j}:=\frac{E_i}{r_j}$ and, for all~$x\in\frac{{\mathcal{M}}}{r_j}$,
$$ w_j(x):=\frac{w(r_jx)}{r_j}.$$
In this way, we see that
$$ E_{2,j}\setminus E_{1,j} =\left\{
x+t\nu_j(x),\;{\mbox{ with }}\;x\in\frac{{\mathcal{M}}}{r_j},\; t\in\big[0,w_j(x)\big)
\right\},$$
where~$\nu_j$ denotes the external unit normal of~$E_{1,j}$.

Combining this and the notation in~\eqref{oqjdlf-12poel}, we obtain
$$ E_{2,j}\setminus E_{1,j}  =\Big\{
x+t\nu_j(x),\;{\mbox{ with }}\; x\in{\mathcal{M}}_{\theta_0,E_{1,j}},\; t\in\big[0,w_j(x)\big)\Big\}.$$
So, setting~$y_j:=\frac{x_j}{r_j}\in(\partial B_1)\cap {\mathcal{M}}_{\theta_0,E_{1,j}}$, we have that
\begin{equation}\label{oksmxcABodkjLmBOaksmx}
y_j+w_j(y_j)\nu_j(y_j)\in \partial E_{2,j}.\end{equation}

Notice in addition that, since the distance of~$x_j$ from the singular set of~$E_1$
is at least~$\theta_0|x_j|=\theta_0r_j$, we have that the
distance of~$y_j$ from the singular set of~$E_{1,j}$
is at least~$\theta_0$. The curvature bound thus allows us to pass to the limit as~$j\to+\infty$, up to a subsequence, and find that~$y_j\to y_\infty$,
with~$y_\infty$ belonging to the regular part of the tangent cone of~$E_1$,
and also~$\nu_j(y_j)\to\nu_\infty(y_\infty)$, where~$\nu_\infty$ denotes the normal of this tangent cone.

Hence, since~$E_2$ shares the same tangent cone of~$E_1$, it follows from~\eqref{oksmxcABodkjLmBOaksmx} that~$w_j(y_j)\to0$ as~$j\to+\infty$.
But this is in contradiction with~\eqref{oksmxcABodkjLmBOaksmx1}
and the proof of~\eqref{12o0asoiIACCU1} is thereby complete.

Now we prove~\eqref{12o0asoiIACCU13}. For this, let
$$ \sigma(r):=\frac{\displaystyle\sup_{x\in(\partial B_r)\cap{\mathcal{M}}}|w(x)|}r.$$
Thanks to~\eqref{12o0asoiIACCU1}, we have that~$\sigma(r)\to0$
as~$r\searrow0$.

Also, we know that~$\sigma(r)>0$ for all~$r\ne0$.
Therefore, given~$\mu_0>1$ we pick an infinitesimal sequence~$r_k>0$ and choose~$r_{\star,k}\in(0,r_k]$ such that
$$ \sigma(r_{\star,k})\ge \frac{\displaystyle\sup_{r\in(0,r_k]}\sigma(r)}{\mu_0}.$$
As a result, for all~$\mu\in(0,1)$,
\begin{eqnarray*}&&\frac{\displaystyle
\sup_{x\in(\partial B_{\mu{r_{\star,k}}})\cap{\mathcal{M}}}|w(x)|}{\mu {r_{\star,k}}}=
\sigma({\mu{r_{\star,k}}})\le
\sup_{r\in(0,r_k]}\sigma(r)\le
\mu_0\,\sigma(r_{\star,k}).
\end{eqnarray*}

Besides, by~\eqref{12o0asoiIACCU}, we can pick~$z_{\star,k}\in(\partial B_{r_{\star,k}})\cap{\mathcal{M}}$
with
$$ \frac{|w(z_{\star,k})|}{r_{\star,k}}\ge\frac{\displaystyle\sup_{x\in(\partial B_{r_{\star,k}})\cap{\mathcal{M}}}|w(x)|}{\mu_0 \,r_{\star,k}}
=\frac{ \sigma(r_{\star,k})}{\mu_0}.$$
This gives that, for all~$x\in {\mathcal{M}}$ with~$|x| \le|z_{\star,k}|/2$,
we have that
\begin{eqnarray*}
&&  \frac{|w(x)|}{|x|} \le \sup_{\mu\in(0,1/2]}
\frac{\displaystyle\sup_{x\in(\partial B_{\mu{r_{\star,k}}})\cap{\mathcal{M}}}|w(x)|}{\mu r_{\star,k}}\le
\mu_0\,\sigma(r_{\star,k})
\le
\frac{\mu_0^2 \,|w(z_{\star,k})|}{|z_{\star,k}|}
,\end{eqnarray*}
which, choosing~$\mu_0:=\sqrt2$, establishes~\eqref{12o0asoiIACCU13}, as desired.

Finally, the distance of~$z_{\star,k}$ from the singular set is estimated from below by the first line in~\eqref{ojDSDV0RTGPRHO}.
\end{proof}

\end{appendix}

\begin{bibdiv}
\begin{biblist}

\bib{MR2765717}{article}{
   author={Ambrosio, Luigi},
   author={De Philippis, Guido},
   author={Martinazzi, Luca},
   title={Gamma-convergence of nonlocal perimeter functionals},
   journal={Manuscripta Math.},
   volume={134},
   date={2011},
   number={3-4},
   pages={377--403},
   issn={0025-2611},
   review={\MR{2765717}},
   doi={10.1007/s00229-010-0399-4},
}

\bib{MR3331523}{article}{
   author={Barrios, Bego\~{n}a},
   author={Figalli, Alessio},
   author={Valdinoci, Enrico},
   title={Bootstrap regularity for integro-differential operators and its
   application to nonlocal minimal surfaces},
   journal={Ann. Sc. Norm. Super. Pisa Cl. Sci. (5)},
   volume={13},
   date={2014},
   number={3},
   pages={609--639},
   issn={0391-173X},
   review={\MR{3331523}},
   DOI = {10.2422/2036-2145.201202\_007},
}

\bib{MR308945}{article}{
   author={Bombieri, Enrico},
   author={Giusti, Enrico},
   title={Harnack's inequality for elliptic differential equations on
   minimal surfaces},
   journal={Invent. Math.},
   volume={15},
   date={1972},
   pages={24--46},
   issn={0020-9910},
   review={\MR{308945}},
   doi={10.1007/BF01418640},
}

\bib{MR1945278}{article}{
   author={Bourgain, Jean},
   author={Brezis, Ha\"{\i}m},
   author={Mironescu, Petru},
   title={Limiting embedding theorems for $W^{s,p}$ when $s\uparrow1$ and
   applications},
   note={Dedicated to the memory of Thomas H. Wolff},
   journal={J. Anal. Math.},
   volume={87},
   date={2002},
   pages={77--101},
   issn={0021-7670},
   review={\MR{1945278}},
   doi={10.1007/BF02868470},
}

\bib{MR4116635}{article}{
   author={Cabr\'{e}, Xavier},
   author={Cinti, Eleonora},
   author={Serra, Joaquim},
   title={Stable $s$-minimal cones in $\Bbb{R}^3$ are flat for $s\sim 1$},
   journal={J. Reine Angew. Math.},
   volume={764},
   date={2020},
   pages={157--180},
   issn={0075-4102},
   review={\MR{4116635}},
   doi={10.1515/crelle-2019-0005},
}

\bib{MR3934589}{article}{
   author={Cabr\'{e}, Xavier},
   author={Cozzi, Matteo},
   title={A gradient estimate for nonlocal minimal graphs},
   journal={Duke Math. J.},
   volume={168},
   date={2019},
   number={5},
   pages={775--848},
   issn={0012-7094},
   review={\MR{3934589}},
   doi={10.1215/00127094-2018-0052},
}

\bib{MR3881478}{article}{
   author={Cabr\'{e}, Xavier},
   author={Fall, Mouhamed Moustapha},
   author={Sol\`a-Morales, Joan},
   author={Weth, Tobias},
   title={Curves and surfaces with constant nonlocal mean curvature: meeting
   Alexandrov and Delaunay},
   journal={J. Reine Angew. Math.},
   volume={745},
   date={2018},
   pages={253--280},
   issn={0075-4102},
   review={\MR{3881478}},
   doi={10.1515/crelle-2015-0117},
}

\bib{MR3661864}{article}{
   author={Caffarelli, Luis},
   author={De Silva, Daniela},
   author={Savin, Ovidiu},
   title={The two membranes problem for different operators},
   journal={Ann. Inst. H. Poincar\'{e} C Anal. Non Lin\'{e}aire},
   volume={34},
   date={2017},
   number={4},
   pages={899--932},
   issn={0294-1449},
   review={\MR{3661864}},
   doi={10.1016/j.anihpc.2016.05.006},
}

\bib{MR2675483}{article}{
   author={Caffarelli, Luis},
   author={Roquejoffre, Jean-Michel},
   author={Savin, Ovidiu},
   title={Nonlocal minimal surfaces},
   journal={Comm. Pure Appl. Math.},
   volume={63},
   date={2010},
   number={9},
   pages={1111--1144},
   issn={0010-3640},
   review={\MR{2675483}},
   doi={10.1002/cpa.20331},
}

\bib{MR2564467}{article}{
   author={Caffarelli, Luis A.},
   author={Souganidis, Panagiotis E.},
   title={Convergence of nonlocal threshold dynamics approximations to front
   propagation},
   journal={Arch. Ration. Mech. Anal.},
   volume={195},
   date={2010},
   number={1},
   pages={1--23},
   issn={0003-9527},
   review={\MR{2564467}},
   doi={10.1007/s00205-008-0181-x},
}

\bib{MR2782803}{article}{
   author={Caffarelli, Luis},
   author={Valdinoci, Enrico},
   title={Uniform estimates and limiting arguments for nonlocal minimal
   surfaces},
   journal={Calc. Var. Partial Differential Equations},
   volume={41},
   date={2011},
   number={1-2},
   pages={203--240},
   issn={0944-2669},
   review={\MR{2782803}},
   doi={10.1007/s00526-010-0359-6},
}

\bib{MR3107529}{article}{
   author={Caffarelli, Luis},
   author={Valdinoci, Enrico},
   title={Regularity properties of nonlocal minimal surfaces via limiting
   arguments},
   journal={Adv. Math.},
   volume={248},
   date={2013},
   pages={843--871},
   issn={0001-8708},
   review={\MR{3107529}},
   doi={10.1016/j.aim.2013.08.007},
}

\bib{case}{article}{
author={Caselli, Michele},
author={Florit, Enric},
author={Serra, Joaquim},
title={Yau's conjecture for nonlocal minimal surfaces},
note={Preprint, https://arxiv.org/abs/2306.07100},}

\bib{MR3401008}{article}{
   author={Chambolle, Antonin},
   author={Morini, Massimiliano},
   author={Ponsiglione, Marcello},
   title={Nonlocal curvature flows},
   journal={Arch. Ration. Mech. Anal.},
   volume={218},
   date={2015},
   number={3},
   pages={1263--1329},
   issn={0003-9527},
   review={\MR{3401008}},
   doi={10.1007/s00205-015-0880-z},
}

\bib{MR3713894}{article}{
   author={Chambolle, Antonin},
   author={Novaga, Matteo},
   author={Ruffini, Berardo},
   title={Some results on anisotropic fractional mean curvature flows},
   journal={Interfaces Free Bound.},
   volume={19},
   date={2017},
   number={3},
   pages={393--415},
   issn={1463-9963},
   review={\MR{3713894}},
   doi={10.4171/IFB/387},
}

\bib{HARDY}{article}{
title={Nonlocal approximation of minimal surfaces: optimal estimates from stability},
author={Chan, Hardy},
author={Dipierro, Serena},
author={Serra, Joaquim},
author={Valdinoci, Enrico},
note={Preprint, https://arxiv.org/abs/2308.06328},}

\bib{MR2995098}{article}{
   author={Chang Lara, H\'{e}ctor},
   author={D\'{a}vila, Gonzalo},
   title={Regularity for solutions of nonlocal, nonsymmetric equations},
   journal={Ann. Inst. H. Poincar\'{e} C Anal. Non Lin\'{e}aire},
   volume={29},
   date={2012},
   number={6},
   pages={833--859},
   issn={0294-1449},
   review={\MR{2995098}},
   doi={10.1016/j.anihpc.2012.04.006},
}

\bib{MR3981295}{article}{
   author={Cinti, Eleonora},
   author={Serra, Joaquim},
   author={Valdinoci, Enrico},
   title={Quantitative flatness results and $BV$-estimates for stable
   nonlocal minimal surfaces},
   journal={J. Differential Geom.},
   volume={112},
   date={2019},
   number={3},
   pages={447--504},
   issn={0022-040X},
   review={\MR{3981295}},
   doi={10.4310/jdg/1563242471},
}

\bib{MR3778164}{article}{
   author={Cinti, Eleonora},
   author={Sinestrari, Carlo},
   author={Valdinoci, Enrico},
   title={Neckpinch singularities in fractional mean curvature flows},
   journal={Proc. Amer. Math. Soc.},
   volume={146},
   date={2018},
   number={6},
   pages={2637--2646},
   issn={0002-9939},
   review={\MR{3778164}},
   doi={10.1090/proc/14002},
}

\bib{MR4175821}{article}{
   author={Cinti, Eleonora},
   author={Sinestrari, Carlo},
   author={Valdinoci, Enrico},
   title={Convex sets evolving by volume-preserving fractional mean
   curvature flows},
   journal={Anal. PDE},
   volume={13},
   date={2020},
   number={7},
   pages={2149--2171},
   issn={2157-5045},
   review={\MR{4175821}},
   doi={10.2140/apde.2020.13.2149},
}

\bib{MR3836150}{article}{
   author={Ciraolo, Giulio},
   author={Figalli, Alessio},
   author={Maggi, Francesco},
   author={Novaga, Matteo},
   title={Rigidity and sharp stability estimates for hypersurfaces with
   constant and almost-constant nonlocal mean curvature},
   journal={J. Reine Angew. Math.},
   volume={741},
   date={2018},
   pages={275--294},
   issn={0075-4102},
   review={\MR{3836150}},
   doi={10.1515/crelle-2015-0088},
}

\bib{MR1942130}{article}{
   author={D\'{a}vila, Juan},
   title={On an open question about functions of bounded variation},
   journal={Calc. Var. Partial Differential Equations},
   volume={15},
   date={2002},
   number={4},
   pages={519--527},
   issn={0944-2669},
   review={\MR{1942130}},
   doi={10.1007/s005260100135},
}

\bib{MR3798717}{article}{
   author={D\'{a}vila, Juan},
   author={del Pino, Manuel},
   author={Wei, Juncheng},
   title={Nonlocal $s$-minimal surfaces and Lawson cones},
   journal={J. Differential Geom.},
   volume={109},
   date={2018},
   number={1},
   pages={111--175},
   issn={0022-040X},
   review={\MR{3798717}},
   doi={10.4310/jdg/1525399218},
}

\bib{MR3007726}{article}{
   author={Dipierro, Serena},
   author={Figalli, Alessio},
   author={Palatucci, Giampiero},
   author={Valdinoci, Enrico},
   title={Asymptotics of the $s$-perimeter as $s\searrow0$},
   journal={Discrete Contin. Dyn. Syst.},
   volume={33},
   date={2013},
   number={7},
   pages={2777--2790},
   issn={1078-0947},
   review={\MR{3007726}},
   doi={10.3934/dcds.2013.33.2777},
}
	
\bib{MR4404780}{article}{
   author={Dipierro, Serena},
   author={Maggi, Francesco},
   author={Valdinoci, Enrico},
   title={Minimizing cones for fractional capillarity problems},
   journal={Rev. Mat. Iberoam.},
   volume={38},
   date={2022},
   number={2},
   pages={635--658},
   issn={0213-2230},
   review={\MR{4404780}},
   doi={10.4171/rmi/1289},
}

\bib{INSPIRED}{article}{
  author = {Dipierro, Serena},
  author={Thompson, Jack},
  author= {Valdinoci, Enrico},
        title = {Some nonlocal formulas inspired by an identity of James Simons},
   note={Preprint, https://arxiv.org/abs/2306.15179},
}

\bib{MR4581189}{article}{
   author={Dipierro, Serena},
   author={Valdinoci, Enrico},
   title={Some perspectives on (non)local phase transitions and minimal
   surfaces},
   journal={Bull. Math. Sci.},
   volume={13},
   date={2023},
   number={1},
   pages={Paper No. 2330001, 77},
   issn={1664-3607},
   review={\MR{4581189}},
   doi={10.1142/S1664360723300013},
}


\bib{MR1481970}{book}{
   author={Hebey, Emmanuel},
   title={Sobolev spaces on Riemannian manifolds},
   series={Lecture Notes in Mathematics},
   volume={1635},
   publisher={Springer-Verlag, Berlin},
   date={1996},
   pages={x+116},
   isbn={3-540-61722-1},
   review={\MR{1481970}},
   doi={10.1007/BFb0092907},
}

\bib{MR1402732}{article}{
   author={Ilmanen, Tom},
   title={A strong maximum principle for singular minimal hypersurfaces},
   journal={Calc. Var. Partial Differential Equations},
   volume={4},
   date={1996},
   number={5},
   pages={443--467},
   issn={0944-2669},
   review={\MR{1402732}},
   doi={10.1007/BF01246151},
}

\bib{MR2487027}{article}{
   author={Imbert, Cyril},
   title={Level set approach for fractional mean curvature flows},
   journal={Interfaces Free Bound.},
   volume={11},
   date={2009},
   number={1},
   pages={153--176},
   issn={1463-9963},
   review={\MR{2487027}},
   doi={10.4171/IFB/207},
}

\bib{MR4104832}{article}{
   author={Julin, Vesa},
   author={La Manna, Domenico Angelo},
   title={Short time existence of the classical solution to the fractional
   mean curvature flow},
   journal={Ann. Inst. H. Poincar\'{e} C Anal. Non Lin\'{e}aire},
   volume={37},
   date={2020},
   number={4},
   pages={983--1016},
   issn={0294-1449},
   review={\MR{4104832}},
   doi={10.1016/j.anihpc.2020.02.007},
}

\bib{KW}{article}{
title={Nonlocal operators related to nonsymmetric forms I: H\"older estimates},
author={Ka{\ss}mann, Moritz},
author={Weidner, Marvin},
note={Preprint, https://arxiv.org/abs/2203.07418},
}

\bib{MR3717439}{article}{
   author={Maggi, Francesco},
   author={Valdinoci, Enrico},
   title={Capillarity problems with nonlocal surface tension energies},
   journal={Comm. Partial Differential Equations},
   volume={42},
   date={2017},
   number={9},
   pages={1403--1446},
   issn={0360-5302},
   review={\MR{3717439}},
   doi={10.1080/03605302.2017.1358277},
}

\bib{MR3930619}{book}{
   author={Maz\'{o}n, Jos\'{e} M.},
   author={Rossi, Julio Daniel},
   author={Toledo, J. Juli\'{a}n},
   title={Nonlocal perimeter, curvature and minimal surfaces for measurable
   sets},
   series={Frontiers in Mathematics},
   publisher={Birkh\"{a}user/Springer, Cham},
   date={2019},
   pages={xviii+123},
   isbn={978-3-030-06242-2},
   isbn={978-3-030-06243-9},
   review={\MR{3930619}},
   doi={10.1007/978-3-030-06243-9},
}

\bib{MR1940355}{article}{
   author={Maz\cprime ya, Vladimir},
   author={Shaposhnikova, Tatyana},
   title={On the Bourgain, Brezis, and Mironescu theorem concerning limiting
   embeddings of fractional Sobolev spaces},
   journal={J. Funct. Anal.},
   volume={195},
   date={2002},
   number={2},
   pages={230--238},
   issn={0022-1236},
   review={\MR{1940355}},
   doi={10.1006/jfan.2002.3955},
}

\bib{MR482508}{article}{
   author={Moschen, Maria Pia},
   title={Principio di massimo forte per le frontiere di misura minima},
   language={Italian, with English summary},
   journal={Ann. Univ. Ferrara Sez. VII (N.S.)},
   volume={23},
   date={1977},
   pages={165--168 (1978)},
   issn={0430-3202},
   review={\MR{482508}},
}
	
\bib{MR3951024}{article}{
   author={S\'{a}ez, Mariel},
   author={Valdinoci, Enrico},
   title={On the evolution by fractional mean curvature},
   journal={Comm. Anal. Geom.},
   volume={27},
   date={2019},
   number={1},
   pages={211--249},
   issn={1019-8385},
   review={\MR{3951024}},
   doi={10.4310/CAG.2019.v27.n1.a6},
}

\bib{MR2948285}{article}{
   author={Savin, Ovidiu},
   author={Valdinoci, Enrico},
   title={$\Gamma$-convergence for nonlocal phase transitions},
   journal={Ann. Inst. H. Poincar\'{e} C Anal. Non Lin\'{e}aire},
   volume={29},
   date={2012},
   number={4},
   pages={479--500},
   issn={0294-1449},
   review={\MR{2948285}},
   doi={10.1016/j.anihpc.2012.01.006},
}

\bib{MR3090533}{article}{
   author={Savin, Ovidiu},
   author={Valdinoci, Enrico},
   title={Regularity of nonlocal minimal cones in dimension 2},
   journal={Calc. Var. Partial Differential Equations},
   volume={48},
   date={2013},
   number={1-2},
   pages={33--39},
   issn={0944-2669},
   review={\MR{3090533}},
   doi={10.1007/s00526-012-0539-7},
}

\bib{MR2244602}{article}{
   author={Silvestre, Luis},
   title={H\"{o}lder estimates for solutions of integro-differential equations
   like the fractional Laplace},
   journal={Indiana Univ. Math. J.},
   volume={55},
   date={2006},
   number={3},
   pages={1155--1174},
   issn={0022-2518},
   review={\MR{2244602}},
   doi={10.1512/iumj.2006.55.2706},
}

\bib{MR906394}{article}{
   author={Simon, Leon},
   title={A strict maximum principle for area minimizing hypersurfaces},
   journal={J. Differential Geom.},
   volume={26},
   date={1987},
   number={2},
   pages={327--335},
   issn={0022-040X},
   review={\MR{906394}},
   DOI = {10.4310/jdg/1214441373},
}

\bib{MR1017330}{article}{
   author={Solomon, Bruce},
   author={White, Brian},
   title={A strong maximum principle for varifolds that are stationary with
   respect to even parametric elliptic functionals},
   journal={Indiana Univ. Math. J.},
   volume={38},
   date={1989},
   number={3},
   pages={683--691},
   issn={0022-2518},
   review={\MR{1017330}},
   doi={10.1512/iumj.1989.38.38032},
}

\bib{MR3268871}{article}{
   author={Wickramasekera, Neshan},
   title={A sharp strong maximum principle and a sharp unique continuation
   theorem for singular minimal hypersurfaces},
   journal={Calc. Var. Partial Differential Equations},
   volume={51},
   date={2014},
   number={3-4},
   pages={799--812},
   issn={0944-2669},
   review={\MR{3268871}},
   doi={10.1007/s00526-013-0695-4},
}

\end{biblist}
\end{bibdiv}
\end{document}